\newtheorem{theorem}{Theorem}[section]
\newtheorem{proposition}[theorem]{Proposition}
\newtheorem{lemma}[theorem]{Lemma}
\newtheorem{corollary}[theorem]{Corollary}
\theoremstyle{definition}
\newtheorem{definition}[theorem]{Definition}
\newcommand{\textdef}[1]{{\normalfont\textit{#1}}}
\newcommand{\st}{{}\,:\,{}}
\newcommand{\defiff}{:\hspace{-1mm}\iff}
\def\function{\@ifstar\@function\@@function}
\newcommand{\@function}[2]{#1 \to #2}
\newcommand{\@@function}[2]{\colon #1 \to #2}
\def\mfunction{\@ifstar\@mfunction\@@mfunction}
\newcommand{\@mfunction}[2]{#1 \rightrightarrows #2}
\newcommand{\@@mfunction}[2]{\colon #1 \rightrightarrows #2}
\newcommand{\pfunction}[2]{:\subseteq #1 \to #2}
\newcommand{\pmfunction}[2]{:\subseteq #1 \rightrightarrows #2}
\newcommand{\charfun}[1]{\chi_{#1}}
\newcommand{\dom}{\operatorname{dom}}
\newcommand{\ran}{\operatorname{ran}}
\newcommand{\restrict}[1]{\ensuremath{\left. \hspace{-1mm} \right|_{#1}}}
\newcommand{\coding}[1]{\langle #1 \rangle}
\newcommand{\str}[1]{({#1})}
\newcommand{\length}[1]{|{#1}|}
\newcommand{\concat}{\smash{\raisebox{.9ex}{\ensuremath\smallfrown} }}
\newcommand{\finStrings}[1]{{#1}^{<\mathbb{N}}}
\newcommand{\infStrings}[1]{{#1}^{\mathbb{N}}}
\newcommand{\Baire}{{\mathbb{N}^\mathbb{N}}}
\newcommand{\baire}{{\finStrings{\mathbb{N}}}}
\newcommand{\Cantor}{{2^\mathbb{N}}}
\newcommand{\cantor}{{\finStrings{2}}}
\newcommand{\sequence}[2]{(#1)_{#2}}
\newcommand{\family}[2]{\{#1\}_{#2}}
\newcommand{\boldfaceDelta}{\underaccent{\sim}{\boldsymbol{\Delta}}}
\newcommand{\boldfaceSigma}{\underaccent{\sim}{\boldsymbol{\Sigma}}}
\newcommand{\boldfacePi}{\underaccent{\sim}{\boldsymbol{\Pi}}}
\newcommand{\boldfaceGamma}{\underaccent{\sim}{\boldsymbol{\Gamma}}}
\newcommand{\Borel}{\underaccent{\sim}{\mathbf{B}}}
\newcommand{\lightfaceDelta}{\Delta}
\newcommand{\lightfaceSigma}{\Sigma}
\newcommand{\lightfacePi}{\Pi}
\newcommand{\lightfaceGamma}{\Gamma}
\newcommand{\hmetric}{\operatorname{d}_\mathcal{H}}
\newcommand{\hypClosed}{\boldfacePi^0_1}
\newcommand{\hypClosedF}{\mathbf{F}}
\newcommand{\hypCompact}{\mathcal{K}}
\newcommand{\hypCompactV}{\mathbf{K}}
\newcommand{\hypCompactUF}{\mathbf{K}_{U}}
\newcommand{\hypClosedUF}{\mathbf{F}_U}
\newcommand{\hypClosedUV}{\mathbf{F}_{UV}}
\newcommand{\closedNegRep}{\psi_-}
\newcommand{\closedPosRep}{\psi_+}
\newcommand{\closedFullRep}{\psi}
\newcommand{\compactCover}{\kappa_-}
\newcommand{\compactMinCover}{\kappa}
\newcommand{\cauchyRep}{\delta_{\mathcal{H}}}
\newcommand{\wadgereducible}{\le_W}
\newcommand{\setdifference}{\setminus}
\newcommand{\setcomplement}[2][]{\ifthenelse{\equal{#1}{}}{#2^\mathrm{C}}{ {#1}\setdifference {#2} }}
\newcommand{\norm}[2]{{ \left\|\, #1\, \right\|_{#2} }}
\newcommand{\diam}[1]{\mathrm{diam}(#1)}
\newcommand{\ball}[2]{{B\left(#1,#2\right)}}
\newcommand{\cball}[2]{\overline{\ball{#1}{#2}}}
\newcommand{\closure}[1]{\overline{#1}}
\newcommand{\boundary}{\partial}
\newcommand{\proj}{\operatorname{proj}}
\newcommand{\support}{\operatorname{spt}}
\newcommand{\dd}[1]{\mathop{d#1}}
\newcommand{\hmeas}{\mathcal{H}}
\newcommand{\hdim}{ \operatorname{dim}_\mathcal{H} }
\newcommand{\fourierdim}{\operatorname{dim}_{\mathrm{F}}}
\newcommand{\fouriertransform}[1]{{ \widehat{#1} }}
\newcommand{\scalarprod}[2]{{ {#1}\cdot {#2} }}
\newcommand{\Salem}{\mathscr{S}}
\newcommand{\ProbabilityMeas}{\mathbb{P}}
\newcommand{\BorelMeas}
\newcommand{\repmap}[1]{\delta_{#1}}
\newcommand{\reptop}[1]{\mathscr{O}(#1)}
\newcommand{\Sier}{\mathbb{S}}
\newcommand{\leftReal}{\mathbb{R}_<}
\newcommand{\rightReal}{\mathbb{R}_>}
\newcommand{\mflim}{\mathsf{lim}}
\newcommand{\weireducible}{\le_{\mathrm{W}}}
\newcommand{\weiequiv}{\equiv_{\mathrm{W}}}
\newcommand{\compproduct}{*}
\newcommandx{\cont}[3][1={},2={}]{{ \mathcal{C}^{#1}_{#2}\ifthenelse{\equal{#3}{}}{}{(#3)} }}
\newcommand{\torus}{\mathbb{T}}
\author{Alberto Marcone \and Manlio Valenti}
\date{}
\newcommand{\printauthor}{{
\bigskip
\footnotesize

Alberto Marcone, \textsc{Department of Mathematics, Computer Science and Physics\newline
University of Udine\newline
Udine, UD 33100, IT}\par\nopagebreak
\textit{E-mail address}: \url{alberto.marcone@uniud.it}

\medskip

Manlio Valenti, \textsc{Department of Mathematics, Computer Science and Physics\newline
University of Udine\newline
Udine, UD 33100, IT}\par\nopagebreak
\textit{E-mail address}: \url{manliovalenti@gmail.com}
}}
\def\subjclass[#1]#2{
	\begingroup
	\renewcommand\thefootnote{}\footnote{\textup{#1} \textit{Mathematics Subject Classification}: #2}%
	\addtocounter{footnote}{-1}%
	\endgroup
}
\title{Effective aspects of Hausdorff and Fourier dimension}
\begin{document}

%

\maketitle

\begin{abstract}
In this paper, we study Hausdorff and Fourier dimension from the point of view of effective descriptive set theory and Type-2 Theory of Effectivity. Working in the hyperspace $\hypCompactV(X)$ of compact subsets of $X$, with $X=[0,1]^d$ or $X=\mathbb{R}^d$, we characterize the complexity of the
family of sets having sufficiently large Hausdorff or Fourier dimension. This, in turn, allows us to show that the family of all the closed Salem sets is $\lightfacePi^0_3$-complete. One of our main tools is a careful analysis of the effectiveness of a classical theorem of Kaufman. We furthermore
compute the Weihrauch degree of the functions computing Hausdorff and Fourier dimension of closed sets.
\end{abstract}

\subjclass[2020]{Primary: 03D78; Secondary: 03D55, 28A75, 28A78 }

\tableofcontents

\section{Introduction}

Hausdorff dimension is probably the most important and well studied among the notions of fractal dimensions, and it plays a central role in analysis and geometric measure theory. In recent work, Jack and Neil Lutz \cite{LutzLutz2017} proved a point-to-set principle linking the (classical)
Hausdorff dimension of a set with the (relative) effective Hausdorff dimension of its points. If we restrict our attention to singletons, we can characterize the effective Hausdorff dimension of $\{ \xi \}$ by means of the Kolmogorov complexity of $\xi$ \cite{LutzMayordomo}, which establishes a
surprising connection between two (apparently) very distant notions.
 
A powerful tool to study the Hausdorff dimension of Borel subsets of $\mathbb{R}^d$ is provided by the Fourier transform. Indeed, Frostman's lemma draws an interesting connection between the Hausdorff dimension of a set and the decay of the Fourier transform of a (probability) measure supported on it. This leads to the notion of Fourier dimension. It is known that the Fourier dimension of a Borel set cannot exceed its Hausdorff dimension.

This work is part of a long-term effort, involving many researchers, aimed at exploring the recursion-theoretic properties of the Fourier dimension. While no point-to-set principles can hold for the Fourier dimension (in such a generality), analyzing the complexity of the Fourier dimension in simpler cases can shed light on the general behavior of the Fourier dimension itself (up to now, still not deeply understood).

Salem sets arise naturally when combining geometric measure theory and harmonic analysis. A set $A\subset \mathbb{R}^d$ is called \textdef{Salem} iff $\hdim(A)=\fourierdim(A)$, where $\hdim$ and $\fourierdim$ denote the Hausdorff and the Fourier dimension respectively. Explicit (i.e.\ non-random)
Salem sets are not easy to build. A classic example comes from the theory of Diophantine approximation of real numbers: for every $\alpha \ge 0$, the set $E(\alpha)$ of $\alpha$-well approximable numbers is Salem with dimension $2/(2+\alpha)$. The computation of its Hausdorff dimension is due to Jarn\'ik
\cite{Jarnik1928} and Besicovitch \cite{Besicovitch1934}, while the result on its Fourier dimension is due to Kaufman \cite{Kaufman81}. The reader is referred to \cite{Bluhm98} or \cite{Wolff03} for detailed proofs of Kaufman's theorem. The construction presented in \cite{Wolff03} will play a central role in the rest of this work.

As a consequence of a result of Gatesoupe \cite{Gatesoupe67}, subsets of $\mathbb{R}^d$ obtained by the rotation of a $1$-dimensional Salem sets with dimension $\alpha$ (having at least two points) are Salem of dimension $d-1+\alpha$, and this provides a simple way for building Salem sets of dimension at least $d-1$ in $\mathbb{R}^d$.

Explicit examples of non-Salem sets are the symmetric Cantor sets with dissection ratio $1/n$ for $n>2$: they are known to have null Fourier dimension and Hausdorff dimension $\log(2)/\log(n)$ (see \cite[Sec.\
4.10]{Mattila95} and \cite[Thm.\ 8.1]{MattilaFA}). Every subset of a $n$-dimensional hyperplane is a $0$-Fourier dimensional subset of $\mathbb{R}^d$ when $n<d$, while it can have any Hausdorff dimension up to $n$.

In recent work \cite{MVSalem}, we studied the complexity, from the point of view of classical descriptive set theory, of a number of relations involving the Hausdorff and the Fourier dimension. In particular, we studied the conditions $\hdim(A)> p$, $\fourierdim(A)>p$, $\hdim(A)\ge p$, $\fourierdim(A)\ge p$, ``$A$ is Salem", when $p\in \mathbb{R}$ and $A$ is a closed subset of $[0,1]$, $[0,1]^d$, and $\mathbb{R}^d$. In particular, we proved that having Hausdorff/Fourier dimension $>p$ and $\ge p$ are, respectively, a $\boldfaceSigma^0_2$-complete and a $\boldfacePi^0_3$-complete conditions. Similarly, we showed that the family of Salem sets is $\boldfacePi^0_3$-complete. In this paper we explore the same conditions from the point of view of effective descriptive set theory and Type-2 Theory of Effectivity (TTE). Notice that in \cite{HLT2007} the authors showed that the sets of elements of the Cantor space having (respectively) effective Hausdorff dimension $>\alpha$ and $\ge \alpha$, where $\alpha$ is a $\Delta^0_2$-computable real, are (respectively) $\lightfaceSigma^0_2$ and $\lightfacePi^0_3$.\smallskip

The paper is organized as follows. After briefly introducing the relevant background notions
(Section~\ref{sec:background}), we present some results on the lightface structure of the hyperspaces of closed and compact sets (Section~\ref{sec:hyp_closed_compact}) and on computable measure theory (Section~\ref{sec:comp_measure_theory}) that will be needed for the main results. In particular, we show that the hyperspace of compact subsets of a computably compact space is computably compact (\thref{thm:K(X)_computably_compact}), the hyperspace of closed subsets of the Euclidean space is computably compact (\thref{thm:hypclosed_Rd_eff_compact}) and that the space of probability measures on a computably compact space is computably compact (\thref{thm:computability_measures}). Section~\ref{sec:eff_kaufman} is devoted to the proof of an effective version of the above mentioned theorem by Kaufman, stated in \thref{thm:s_alpha_effective}. Section~\ref{sec:eff_complexity_salem} contains the main results on the effective complexity of the conditions mentioned above, and can be summarized  as follows: if $X=[0,1]^d$ or $X=\mathbb{R}^d$ then,
\begin{center}
	\renewcommand{\arraystretch}{1.5}
	\begin{tabular}{ | c | c | c | }
		\hline
		$p<d$ & $\{ A \in\hypCompactV(X) \st \hdim(A)> p\}$ & $\lightfaceSigma^0_2$-complete \\
		$p>d$ & $\{ A \in\hypCompactV(X) \st \hdim(A)\ge p\}$ & $\lightfacePi^0_3$-complete \\
		$p<d$ & $\{ A \in\hypCompactV(X) \st \fourierdim(A)> p\}$& $\lightfaceSigma^0_2$-complete \\
		$p>d$ & $\{ A \in\hypCompactV(X) \st \fourierdim(A)\ge p\}$& $\lightfacePi^0_3$-complete \\
		\hline
		\multicolumn{2}{| c |}{ $\{ A \in\hypCompactV(X)\st A \text{ is Salem}\}$ }&$\lightfacePi^0_3$-complete \\
		\hline
	\end{tabular}
\end{center}
where $\hypCompactV(X)$ is the hyperspace of compact subsets of $X$ (endowed with the canonical lightface structure induced by the Hausdorff metric, introduced in Section~\ref{sec:hyp_closed_compact}). The complexities remain the same if we consider the hyperspace of closed sets. In particular, the
fact that the family of closed Salem subsets of $[0,1]$ is $\lightfacePi^0_3$-complete answers a question asked by Slaman during the IMS Graduate Summer School in Logic, held in Singapore in 2018. In
Section~\ref{sec:salem_weihrauch}, we use our results to characterize the Weihrauch degree of the maps computing the Hausdorff and the Fourier dimension of a closed set, in particular answering a question raised by Fouch\'e (\cite{Dagstuhl2016}) and Pauly.

\subsubsection*{Acknowledgements}
The early investigations leading to this paper were conducted jointly with Ted Slaman and Jan Reimann. We would also like to thank Vasco Brattka, Antonio Montalb\'an, Arno Pauly, Matthias Schr\"oder, Luciano Tubaro, and Linda Brown Westrick for useful discussions and suggestions on the topics of the paper. We thank the two anonymous referees for their careful reading of the paper.

Both author's research was partially supported by the Italian PRIN 2017 Grant ``Mathematical Logic: models, sets, computability".

\section{Background}
\label{sec:background}
Throughout the paper, we will use $\ball{x}{r}$ to denote the open ball with center $x$ and radius $r$. We also fix a computable enumeration $\sequence{q_i}{i\in\mathbb{N}}$ of $\mathbb{Q}^+$.
\subsection{Hausdorff and Fourier dimension}
\label{sec:background_hausdorr_fourier}
Let us briefly introduce the relevant notions from geometric measure theory. For a more thorough presentation the reader is referred to \cite{Falconer14}.

Let $(X,d)$ be a separable metric space and let $A\subset X$. We denote the diameter of $A$ by $\diam{A}$. For every $s\ge 0$, $\delta\in (0,+\infty]$ we define
\begin{align*}
	\hmeas^s_\delta(A) & := \inf\left\{ \sum_{i\in I} \diam{E_i}^s \st \family{E_i}{i\in I} \text{ is a }\delta\text{-cover of }A \right\},\\
	\hmeas^s(A) & := \lim_{\delta \to 0^+} \hmeas^s_\delta(A) = \sup_{\delta>0} \hmeas^s_\delta(A),
\end{align*}
where $\family{E_i}{i\in I}$ is a $\delta$-cover of $A$ if $A\subset \bigcup_{i\in I} E_i$ and $\diam{E_i}\le \delta$ for each $i\in I$. The function $\hmeas^s$ is called \textdef{$s$-dimensional Hausdorff measure}. The \textdef{Hausdorff dimension} of $A$ is defined as
\[ 	\hdim(A):= \sup\{ s\in [0,+\infty)\st \hmeas^s(A) >0\}. \]

It is well-known that, as a consequence of Frostman's lemma (see \cite[Thm.\ 8.8]{Mattila95}), the Hausdorff dimension of a Borel subset of $\mathbb{R}^d$ can be equivalently written as
\[ \sup \{ s\in [0,d] \st (\exists \mu \in \ProbabilityMeas(A))(\exists c>0)(\forall x \in \mathbb{R}^d)(\forall r>0)(\mu(\ball{x}{r})\le cr^s)\}, \]
where $\ProbabilityMeas(A)$ is the set of Borel probability measures
supported on $A$ (in other words, for Borel sets Hausdorff and capacitary
dimensions coincide).

This characterization suggests the possibility to use the tools of harmonic analysis to obtain estimates on the Hausdorff dimension. We can define the \textdef{Fourier transform of a probability measure $\mu\in \ProbabilityMeas(\mathbb{R}^d)$} as the function
\[ \fouriertransform{\mu}\colon\mathbb{R}^d\to \mathbb{C}:= x \mapsto \int_{\mathbb{R}^d} e^{-i\, \scalarprod{\xi}{x}} \dd{\mu(x)} \]
where $\scalarprod{\xi}{x}$ denotes scalar product. The \textdef{Fourier
dimension} of $A\subset \mathbb{R}^d$ is then defined as
\[ \fourierdim(A):= \sup\{ s\in [0,d]\st (\exists \mu\in \ProbabilityMeas(A))(\exists c>0)(\forall x\in \mathbb{R}^d)(|\fouriertransform{\mu}(x)|\le c|x|^{-s/2} ) \}. \]

It is known that, for every Borel $A\subset\mathbb{R}^d$, $\fourierdim(A)\le \hdim(A)$ (see \cite[Chap.\ 12]{Mattila95}). If $\fourierdim(A)= \hdim(A)$ then $A$ is called \textdef{Salem set}. We denote the collection of Salem subsets of $X\subset \mathbb{R}^d$ with $\Salem(X)$.

For background notions on the Fourier transform the reader is referred to \cite{SteinWeiss}. For its applications to geometric measure theory see \cite{MattilaFA}.

We notice that the Hausdorff dimension is
\begin{description}
	\item[countably stable]: for every family $\family{A_i}{i\in\mathbb{N}}$, $\hdim(\bigcup_i A_i) = \sup_i \hdim(A_i)$ \cite[p.\ 59]{Mattila95};
	\item[invariant under bi-Lipschitz maps]: for every $\alpha$-H\"older continuous map $f\colon\mathbb{R}^n\to \mathbb{R}^m$ we have $\hdim(f(A))\le \alpha^{-1}\hdim(A)$ \cite[Prop.\ 3.3]{Falconer14}.
\end{description}
In particular, the inclusion map $\iota\function{\mathbb{R}^n}{\mathbb{R}^m}$, with $n\le m$, preserves the Hausdorff dimension.

None of the above properties hold, in full generality, for the Fourier
dimension. In fact, it is not even finitely stable \cite[Sec.\ 1.3]{EPS2014}
and does not behave well under H\"older continuous transformations
\cite[Sec.\ 8]{ESSurveyFourier}. Moreover, it is sensitive to the choice of
the ambient space: as mentioned, every $A\subset \mathbb{R}^n$ has null
Fourier dimension when seen as a subset of $\mathbb{R}^m$ with $n<m$.
However, some regularity properties hold if we restrict our attention to
special cases. We mention, in particular, that the Fourier dimension is
\textdef{inner regular for compact sets}, i.e.
\[ \fourierdim(A) = \sup \{ \fourierdim(K) \st K \subset A \text{ and } K \text{ is compact} \}, \]
and countably stable for closed sets \cite[Prop.\ 5]{EPS2014}, i.e.\ for every countable family $\family{A_k}{k}$ of closed subsets of $\mathbb{R}^d$ we have
\[ \fourierdim\left(\bigcup_{k} A_k\right) = \sup_k \fourierdim(A_k). \]
Moreover, the Fourier dimension is invariant under similarities or affine (invertible) transformations (this is a simple consequence of the properties of the Fourier transform).

\subsection{Computability on represented spaces}
In this paper, we use the standard approach of Type-2 Theory of Effectivity (TTE) to define a notion of computability on a wide range of spaces. We now introduce the main definitions, for a more detailed presentation the reader is referred to \cite{Pauly16,Weihrauch00}.

Let $\Baire$ be the Baire space and let $\baire$ be the set of finite sequences of natural numbers. Let also $\Cantor$ be the Cantor space and $\cantor$ be the set of finite binary sequences. Both $\Baire$ and $\Cantor$ are endowed with the usual product topology. We sometimes describe a string by a list of its elements. E.g.\ we write $\str{n_0, n_1,\hdots, n_k }$ for the string $\sigma:= i \mapsto n_i$. Similarly, we can describe an infinite string by $\str{n_0, n_1,\hdots}$, when it is clear from the context how to continue the sequence. We write $\str{}$ for the empty sequence. We write $\length{\sigma}$ for the length of $\sigma$ and $\sigma\concat \tau$ for the concatenation of the strings $\sigma$ and $\tau$. We will use the symbol $\coding{\cdot}$ to denote a fixed computable bijection $\function*{\baire}{\mathbb{N}}$ with computable inverse. It is often convenient to write $\coding{n_0,\hdots, n_k}$ in place of $\coding{\str{n_0,\hdots, n_k}}$. In the literature, the symbol $\coding{\cdot}$ is often used to denote also the \textdef{join} between two (of the same length, finite or infinite) strings. With a (relatively) small abuse of notation, if $x$,$y$ are two strings of the same length we will write\footnotemark{} $\coding{x,y}(i):= \coding{x(i),y(i)}$ and $\coding{x_0,x_1,\hdots}(\coding{i,j}) := x_i(j)$.
\footnotetext{The exact details of the definition of the join are often not relevant. E.g.\ a common way to define the join of $x,y\in \Baire$ is letting $\coding{x,y}(2n):= x(n)$ and $\coding{x,y}(2n+1):= y(n)$.}%

A \textdef{represented space} is a pair $(X,\repmap{X})$ where $X$ is a set and $\repmap{X}\pfunction{\Baire}{X}$ is a partial surjection called \textdef{representation map}. For every $x\in X$, the elements of $\repmap{X}^{-1}(x)$ are called $\repmap{X}$-\textdef{names} for $x$ (we just say names if there is no ambiguity on the representation map).

We can exploit the classical notion of computability on $\Baire$ (see
\cite{Weihrauch00} for an introduction) to induce a notion of computability
on any represented space: let $f$ be a partial multi-valued function between
the represented spaces $(X,\repmap{X})$ and $(Y,\repmap{Y})$ (in symbols
$f\pmfunction{X}{Y}$). A \textdef{realizer} for $f$ is a partial function
$F\pfunction{\Baire}{\Baire}$ such that, for every $p\in\dom(f\circ
\repmap{X})$, we have that $\repmap{Y}(F(p))\in f(\repmap{X}(p))$. We say
that $f$ is \textdef{$(\repmap{X},\repmap{Y})$-computable} if it has a
computable realizer (again, we just say computable if the representation maps
are clear from the context). We say that $f$ is \textdef{realizer-continuous}
if it has a continuous realizer. The set of realizer-continuous partial functions between represented spaces is a represented space itself (\cite[Sec.\ 2.3]{Weihrauch00}).

As the notation suggests, the induced notion of computability is intrinsically tied to the choice of the representation maps. If $\delta$ and $\delta'$ are two representation maps for $X$, we say that $\delta$ is \textdef{(topologically) reducible} to $\delta'$, and we write $\delta\le \delta'$ (resp.\ $\delta\le_t \delta'$), if there is a (continuous) computable map $F\pfunction{\Baire}{\Baire}$ s.t.\ $\delta(p)=\delta'(F(p))$ for every $p\in \dom(\delta)$. The maps $\delta$ and $\delta'$ are called \textdef{(topologically) equivalent}, written $\delta\equiv \delta'$ (resp.\ $\delta \equiv_t \delta'$), if $\delta\le\delta'$ and $\delta'\le\delta$ (resp.\ $\delta\le_t\delta'$ and $\delta'\le_t\delta$).

Often times, spaces are naturally endowed with some canonical topology, and it would be desirable that the topological structure agrees with the computational one, i.e.\ that the notions of continuity and realizer-continuity agree. We will consider (and mainly focus our attention on) the so-called \textdef{admissible representations}, which intuitively are those that satisfy this requirement.

\begin{definition}[{\cite[Def.\ 1]{Schroder02}}]
	\thlabel{def:admissibility}
	Let $(X,\tau_X)$ be a topological space. A representation map $\repmap{X}$ of $X$ is called \textdef{admissible w.r.t.} $\tau_X$ if it is continuous and, for every other continuous representation map $\delta$ on $X$, we have $\delta \le_t \repmap{X}$.
\end{definition}%

In other words, an admissible representation of $X$ is $\le_t$-maximal among the continuous representation of $X$. We will just say that a representation is admissible if there is no ambiguity on the topology.

\begin{theorem}[{\cite[Thm.\ 3.2.11]{Weihrauch00}}]
	Let $(X,\repmap{X},\tau_X)$, $(Y,\repmap{Y},\tau_Y)$ be admissibly represented second-countable $T_0$ spaces. For every $f\pfunction{X}{Y}$,
	\[ f \text{ is continuous} \iff f \text{ is realizer-continuous.}  \]
\end{theorem}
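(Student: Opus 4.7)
The plan is to prove the two implications separately, both leveraging the defining property of admissibility.

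For the forward direction, assume $f$ is continuous and form $g := f \circ \repmap{X} \pfunction{\Baire}{Y}$, which is itself a partial continuous map as a composition of continuous maps. To produce a continuous realizer of $f$, I need to factor $g$ continuously through $\repmap{Y}$; since the paper's definition of admissibility is phrased in terms of \emph{representation maps} rather than arbitrary partial continuous functions, the trick is to upgrade $g$ into a full representation of $Y$ by ``gluing'' it with $\repmap{Y}$. Concretely, I would set $\delta'(\str{0}\concat p) := \repmap{Y}(p)$ and $\delta'(\str{1}\concat p) := g(p)$ on the appropriate domains. The $\str{0}$-branch already surjects onto $Y$, and since each branch is defined on a clopen cylinder, $\delta'$ is continuous and hence a continuous representation of $Y$. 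Admissibility of $\repmap{Y}$ then yields a continuous $H$ with $\repmap{Y}\circ H = \delta'$, and $F(p) := H(\str{1}\concat p)$ is a continuous realizer of $f$.

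For the backward direction, suppose $f$ admits a continuous realizer $F$. Then $f \circ \repmap{X} = \repmap{Y}\circ F$ on $\dom(f\circ\repmap{X})$, so $f \circ \repmap{X}$ is continuous. It remains to descend this continuity from names to points of $X$, which is where second-countability and the $T_0$ axiom enter. The classical ingredient is that an admissible representation of a second-countable $T_0$ space is a topological quotient map onto $X$, equivalently that it admits a ``continuous choice of names'': whenever $x_n \to x$ in $X$ one can select $\repmap{X}$-names $p_n$ of $x_n$ converging in $\Baire$ to a name of $x$. Combining this with the continuity of $f\circ \repmap{X}$ yields sequential continuity of $f$, and since $X$ is second-countable (hence first-countable) this coincides with continuity.

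The main obstacle, as anticipated, is in the backward direction, specifically in justifying the quotient-map or continuous-choice property of admissible representations in the second-countable $T_0$ setting (this is where the maximality clause in \thref{def:admissibility} is applied to a cleverly chosen auxiliary representation built from convergent sequences in $X$, via a fixed enumeration of a countable basis). The forward direction is essentially a bookkeeping argument once the gluing construction is in place, while the backward direction genuinely uses the structural hypotheses on $X$: for arbitrary topological spaces, admissibility alone does not guarantee that a continuous realizer of $f$ projects to a continuous $f$.
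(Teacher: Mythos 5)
The paper does not give its own proof here (it cites Weihrauch, Thm.~3.2.11), so I am comparing your argument against the standard one and checking it internally against the paper's (Schr\"oder-style) definition of admissibility (\thref{def:admissibility}).

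Your proof is correct, and both directions are the right ones to give \emph{for this phrasing} of admissibility. In the forward direction the gluing step is genuinely necessary: the maximality clause of \thref{def:admissibility} only quantifies over continuous \emph{representations} (i.e.\ continuous partial surjections), and $g=f\circ\repmap{X}$ need not be surjective, so one must pad it with $\repmap{Y}$ on a disjoint clopen branch to get a bona fide continuous representation $\delta'$; maximality then yields a continuous $H$ with $\repmap{Y}\circ H = \delta'$, and $F(p):=H(\str{1}\concat p)$ is the realizer. This part uses neither second-countability nor $T_0$. In the backward direction the key lemma you invoke, the lifting of convergent sequences, does indeed follow from maximality alone: given $x_n\to x$, embed $\mathbb{N}\cup\{\infty\}$ as a closed subset of a clopen cylinder, glue the resulting map $q\mapsto x_n$, $q_\infty\mapsto x$ with $\repmap{X}$ on another cylinder to get a continuous representation $\delta'$, and push the convergent sequence through the continuous translation $\delta'\le_t\repmap{X}$. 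Second-countability is then used exactly where you say: to guarantee that $\dom(f)$ is first-countable (hence sequential), so that the sequential continuity obtained from the lifting argument upgrades to topological continuity. (One small imprecision: ``quotient map'' and ``continuous choice of names'' are not literally equivalent properties of a continuous surjection; the lifting property is the stronger one and is what your proof actually uses. For sequential $X$ the lifting property does imply the quotient property, so no harm is done, but ``equivalently'' overstates it.)

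Compared with Weihrauch's own argument: his Thm.~3.2.11 is stated for \emph{his} notion of admissibility, which is concrete, namely equivalence with the standard representation $\delta_{\mathrm{std}}$ built from a fixed numbering of a countable base, and his proof reads continuity of $f$ off directly from enumerations of basic open sets of $\repmap{X}$- and $\repmap{Y}$-names. Your proof instead works directly from the abstract $\le_t$-maximality property and never mentions $\delta_{\mathrm{std}}$. This is the more natural route given the definition the paper actually records, and it makes transparent which hypotheses do work where ($T_0$ is only needed so that an admissible representation exists at all, by \thref{thm:characterization_admissibility}; second-countability is only needed for the sequential-to-topological upgrade). The price is that you must supply the lifting-of-sequences lemma yourself, which you correctly flag as the only non-routine step.
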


In particular, whenever $X$ and $Y$ are admissibly represented, the space $\cont[][]{X,Y}$ of continuous functions from $X$ to $Y$ is a represented space.

Clearly, the very existence of an admissible representation for the
topological space $(X,\tau_X)$ depends on the topology itself: a family
$\mathcal{B}$ of subsets of $X$ is called a \textdef{pseudobase} iff for
every open set $U\subset X$, every $x\in U$ and every sequence
$(y_n)_{n\in\mathbb{N}}$ converging to $x$,
\[ (\exists B\in\mathcal{B})(\exists n_0 \in \mathbb{N})( \{x\}\cup \{y_n\st n\ge n_0\} \subset B\subset U). \]

\begin{theorem}[{\cite[Thm.\ 13]{Schroder02}}]
	\thlabel{thm:characterization_admissibility}
	A topological space $(X,\tau_X)$ admits an admissible representation $\repmap{X}$ iff it is $T_0$ and admits a countable pseudobase.
\end{theorem}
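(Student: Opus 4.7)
The plan is to prove the two directions of the equivalence separately, with the bulk of the work going into constructing an admissible representation from a countable pseudobase.

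For $(\Rightarrow)$, I assume $X$ admits an admissible representation $\repmap{X}$. The $T_0$ property should follow from a Wadge-rank style argument: if $x \ne y$ were topologically indistinguishable, then every function $\delta'\function{\Baire}{X}$ taking only the values $x$ and $y$ would be trivially continuous (as the topology on $\{x,y\}$ cannot separate them), so admissibility would require a single $\repmap{X}$ for which a continuous $F$ could realize $F^{-1}(\repmap{X}^{-1}(\{x\})) = \delta'^{-1}(\{x\})$ for \emph{every} subset $\delta'^{-1}(\{x\}) \subseteq \Baire$, contradicting the bounded descriptive complexity of continuous preimages of a fixed set. For the countable pseudobase, I take $\mathcal{B} := \{ \repmap{X}([\sigma] \cap \dom \repmap{X}) \st \sigma \in \baire \}$, possibly replaced by their topological interiors to land in $\tau_X$. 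The pseudobase property is then verified via continuity of $\repmap{X}$: given $x \in U$ open and $y_n \to x$, lift $x$ to a name $p$ and use admissibility to lift the $y_n$ to names $p_n \to p$ in $\Baire$; a common cylinder $[\sigma]$ around $p$ mapped by $\repmap{X}$ into $U$ then contains all but finitely many of the $p_n$, yielding the required squeezing $\{x\} \cup \{y_n : n \ge n_0\} \subseteq \repmap{X}([\sigma]) \subseteq U$.

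For $(\Leftarrow)$, enumerate the pseudobase as $\mathcal{B} = \family{B_n}{n \in \mathbb{N}}$ and define $\delta \pfunction{\Baire}{X}$ by stipulating $\delta(p) = x$ iff the family $\family{B_n}{n+1 \in \ran(p)}$ consists of open neighborhoods of $x$ that collectively form a pseudobase at $x$. The $T_0$ hypothesis, combined with the separating power of the pseudobase, ensures $\delta$ is a well-defined partial function, and surjectivity is immediate by enumerating all indices $n+1$ for $B_n \ni x$. To verify admissibility, given a continuous $\delta' \pfunction{\Baire}{X}$, I would construct a continuous reduction $F \pfunction{\Baire}{\Baire}$ which on input $p$ searches through pairs $(\sigma, n)$ where $\sigma$ is a finite prefix of $p$ and $\delta'([\sigma] \cap \dom \delta') \subseteq B_n$, emitting $n + 1$ in the output whenever such a witness is detected.

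The principal obstacle is showing that this search yields enough $B_n$ to meet the pseudobase condition at $x := \delta'(p)$: for every open $U \ni x$ and every sequence $y_m \to x$, we need some emitted $B_n$ sandwiched between $\{x\} \cup \{y_m : m \ge m_0\}$ and $U$. The idea is to lift the sequence $y_m$ to a sequence of $\delta'$-names $p_m \to p$ in $\Baire$ (which is possible because continuity of $\delta'$ forces cylinders of $p$ to map into arbitrary neighborhoods of $x$); applying the pseudobase condition in $X$ to this sequence yields a $B_n \in \mathcal{B}$ with the desired squeezing property, and the fact that the $p_m$ eventually share arbitrarily long prefixes with $p$ means some such $\sigma$ witnesses $\delta'([\sigma]) \subseteq B_n$, so $n+1$ is emitted by $F(p)$. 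Continuity of $F$ and of $\delta$ then follow from standard arguments on enumeration operators on $\Baire$, and admissibility of $\delta$ is established by design.
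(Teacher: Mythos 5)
This statement is cited from \cite[Thm.\ 13]{Schroder02}; the paper itself gives no proof, so I can only compare your proposal against the standard (Schr\"oder-style) argument, whose overall shape you have correctly reproduced: extract a pseudobase from $\repmap{X}$-images of cylinders for necessity, and build the representation from pseudobase indices with a witness-search reduction for sufficiency.

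The gap is in the admissibility verification of the $(\Leftarrow)$ direction. You claim that given a continuous $\delta'$ and $x=\delta'(p)$, an arbitrary sequence $y_m\to x$ in $X$ can be lifted to a sequence of $\delta'$-names $p_m\to p$, ``because continuity of $\delta'$ forces cylinders of $p$ to map into arbitrary neighborhoods of $x$.'' This misreads continuity: continuity controls \emph{forward} images of cylinders, and says nothing about whether names of the given $y_m$ exist anywhere near $p$. In general they do not --- for instance $\delta'$ can be locally constant on a cylinder around $p$, or the $y_m$ may simply miss $\delta'([\sigma])$ for every $\sigma\sqsubset p$. Even granting the lift, your final step is also unsound: from $p_m\in[\sigma]$ and $\delta'(p_m)\in B_n$ one cannot infer $\delta'([\sigma])\subseteq B_n$, since $\delta'([\sigma])$ is typically far larger than $\{\delta'(p_m)\st p_m\in[\sigma]\}$. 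The correct argument avoids lifting entirely: suppose toward a contradiction that no emitted $B_n$ squeezes $\{x\}\cup\{y_m\st m\ge m_0\}$ inside $U$; enumerate $\{B\in\mathcal{B}\st B\subseteq U\}$ and, for each such $B$, choose escaping witnesses --- a $y_m\notin B$ when some $\delta'([\sigma])\subseteq B$ holds, and a $\delta'(q)\notin B$ with $q\in[p\restrict{k}]\cap\dom\delta'$ when it does not. Interleaving these witnesses into a single sequence $w_m\to x$ and applying the pseudobase condition to $(x,U,(w_m))$ yields a $B\subseteq U$ that must contain a tail of $(w_m)$, contradicting the construction.

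Two smaller issues: pseudobase elements are not required to be open, so the suggestion to ``replace by topological interiors'' in $(\Rightarrow)$ and the phrase ``open neighborhoods'' in your definition of $\delta$ would both break the argument (the paper's definition of pseudobase explicitly allows arbitrary subsets). And in the $T_0$ part of $(\Rightarrow)$, $\delta'^{-1}(\{x\})$ cannot be ``every subset of $\Baire$'' --- it is constrained to be a subset of $\repmap{X}^{-1}(\{x,y\})$ with both parts nonempty --- so the contradiction should be phrased as a cardinality or Wadge-rank argument within that constrained family.
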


\subsection{Representations on (hyper)spaces}
\label{sec:rep_hyperspaces}

While (the proof of) \thref{thm:characterization_admissibility} provides an explicit definition of an admissible representation for a wide variety of spaces, in many practical situations this is not the representation map we endow our space with. 
Observe that, while all admissible representation maps are topologically equivalent, they do not necessarily induce the same notions of computability, i.e.\ they may not be (computably) equivalent.

Let $X=(X,d,\alpha)$ be a separable metric space, where $d\function{X\times X}{\mathbb{R}}$ is the distance function and $\alpha\function{\mathbb{N}}{X}$ is an enumeration of a dense subset of $X$. The \textdef{Cauchy representation} on $X$ is the map $\repmap{\mathsf{C}}\pfunction{\Baire}{X}$
defined as	 
\[ \repmap{\mathsf{C}}(p) = x \defiff \lim_{n\to\infty} \alpha(p(n)) = x,  \]
where $\dom(\repmap{\mathsf{C}}):= \{ p\in\Baire \st  (\forall n)(\forall m>n)(|\alpha(p(n)) - \alpha(p(m))| \le 2^{-n}) \}$ is the set of \textdef{rapidly converging sequences}. The Cauchy representation is the ``canonical'' representation map for separable metric spaces. It is equivalent to the representation map that names $x\in X$ via any $q\in\Baire$ s.t.\ $\{x\}= \bigcap_{n\in\mathbb{N}} \ball{\alpha(q(n))}{2^{-n}}$.

Recall that $\sequence{q_i}{i\in\mathbb{N}}$ is a canonical computable enumeration of the rationals. We say that $X$ is a \textdef{computable metric space} if the set
\[ \{ (i,j,n,m)\in \mathbb{N}^4\st q_i < d(\alpha(n),\alpha(m)) < q_j \} \]
is computably enumerable, i.e.\ if the restriction of the distance function to $\ran(\alpha)^2$ is computable. We can always assume that, if $X$ is infinite, $\alpha$ is an injective map (i.e.\ every element of the dense subset of $X$ has a unique index). Indeed, for every infinite computable
metric space $(X,d,\alpha)$ there is an injective subsequence $\beta$ of $\alpha$ s.t.\ the spaces $(X,d,\alpha)$ and $(X,d,\beta)$ are computably homeomorphic (i.e.\ there is a computable bijection with computable inverse) \cite[Thm.\ 2.9]{GregKispPauly17}.

Let $(Y,\tau_Y)$ be a second-countable topological space. We say that $(Y,\sequence{B_n}{n\in\mathbb{N}})$ is an \textdef{effective (topological) space} if $\sequence{B_n}{n\in\mathbb{N}}$ is an enumeration of a basis for $\tau_Y$ s.t.\ there is a computable function $\varphi\function{\baire\times \mathbb{N}}{\mathbb{N}}$ s.t.
\[ \bigcap_{i<\length{\sigma}} B_{\sigma(i)} = \bigcup_{k\in\mathbb{N}} B_{\varphi(\sigma,k)}. \]
Every effective topological space $(Y,\sequence{B_n}{n\in\mathbb{N}})$ can be endowed with the structure of represented space by defining an (admissible) representation map $\repmap{Y}$ that names a point $y\in Y$ via an enumeration of the set $\{i \st y\in B_i\}$. Notice that a computable metric space $(X,d,\alpha)$ can be seen as an effective space by considering the ``standard'' enumeration of the basis for $X$ (i.e.\ $B_{\coding{i,j}}=\ball{\alpha(i)}{q_j}$). In this case, the Cauchy representation on $X$ is equivalent to the representation $\repmap{X}$.

For every topological space $Z$, the family of Borel subsets of $Z$ can be stratified in a hierarchy, called the \textdef{Borel hierarchy}. The levels of this hierarchy are defined by transfinite recursion on $1\le \xi< \omega_1$, where $\omega_1$ is the first uncountable ordinal. We denote by $\boldfaceSigma^0_1(Z)$ and $\boldfacePi^0_1(Z)$ respectively the family of the open and the closed subsets of $Z$. For every $\xi>1$ we define:
\begin{itemize}
	\item[] $\boldfaceSigma^0_\xi(Z) := \left\{\bigcup_n A_n \setdifference B_n \st A_n,B_n\in \boldfaceSigma^0_{\xi_n}(Z),\, \xi_n< \xi,\, n\in\mathbb{N}\right\} $,
	\item[] $\boldfacePi^0_\xi(Z) := \{ X\setdifference A \st A \in \boldfaceSigma^0_\xi(Z) \} $.
\end{itemize}
Moreover, for every $\xi$, we define $\boldfaceDelta^0_\xi(Z) := \boldfaceSigma^0_\xi(Z) \cap \boldfacePi^0_\xi(Z)$. In particular, $\boldfaceDelta^0_1(Z)$ is the family of clopen subsets of $Y$. The families $\boldfaceSigma^0_2(Z)$ and $\boldfacePi^0_2(Z)$ are often written resp.\ $\boldsymbol{F}_\sigma(Z)$ and $\boldsymbol{G}_\delta(Z)$. It is known that $\Borel(Z)=\bigcup_\xi \boldfaceSigma^0_\xi(Z) = \bigcup_\xi \boldfacePi^0_\xi(Z) = \bigcup_\xi \boldfaceDelta^0_\xi(Z)$, where $\Borel(Z)$ denotes the family of Borel subsets of $Z$. We will omit the dependency from the space if there is no ambiguity.

For metric spaces (in fact, for Hausdorff spaces), the definition of the pointclass $\boldfaceSigma^0_\xi$ can be simplified letting
\[ \boldfaceSigma^0_\xi = \left\{\bigcup_n A_n \st A_n \in \boldfacePi^0_{\xi_n},\, \xi_n< \xi,\, n\in\mathbb{N}\right\}.\]

Let $W$ and $Z$ be topological spaces and let $A\subset W$, $B\subset Z$. We say that $A$ is \textdef{Wadge reducible} to $B$ if there is a continuous function $f\function{W}{Z}$ s.t.\
\[ x\in A \iff f(x)\in B~.\]

Let $\boldfaceGamma$ be a Borel class and assume that $Z$ is Polish. We say that $B$ is \textdef{$\boldfaceGamma$-hard} if $A\wadgereducible B$ for every $A\in \boldfaceGamma(\Baire)$. If $B$ is $\boldfaceGamma$-hard and $B \in \boldfaceGamma(Z)$ then we say that $B$ is \textdef{$\boldfaceGamma$-complete}.

\medskip

For every effective second-countable space $(Y,\sequence{B_n}{n\in\mathbb{N}})$, we say that $A\subset Y$ is \textdef{effectively open} if $A=\bigcup_{n\in\mathbb{N}} B_{\varphi(n)}$ for some computable function $\varphi\function{\mathbb{N}}{\mathbb{N}}$. The set of effectively open subsets of $Y$ is denoted by $\lightfaceSigma^0_1(Y)$.  In other words, an effective open set is a computable union of basic open sets. The complement of an effectively open set is called \textdef{effectively closed} and the family of all effectively closed subsets of $Y$ is denoted by $\lightfacePi^0_1(Y)$.

Notice that $\lightfaceSigma^0_1(Y)$ sets can be indexed using the code for a computable function defining them. In other words, there is a canonical indexing $\sequence{A_i}{i\in\mathbb{N}}$ of the $\lightfaceSigma^0_1(Y)$ sets. This allows us to define
\begin{itemize}
	\item[] $\lightfaceSigma^0_2(Y):= \{ A \subset Y \st A=\bigcup_{n\in\mathbb{N}} A_{\varphi(2n+1)}\setdifference A_{\varphi(2n)}, \text{ for some computable }\varphi \}$;
	\item[] $\lightfacePi^0_{2}(Y) := \{ Y \setdifference A \st A \in \lightfaceSigma^0_{2}(Y) \} $.
\end{itemize}
We can inductively define the \textdef{(Kleene's) arithmetical hierarchy}, also called \textdef{lightface hierarchy}, by letting $\sequence{A^n_i}{i\in\mathbb{N}}$ be an effective indexing of the $\lightfaceSigma^0_n(Y)$ sets and defining
\begin{itemize}
	\item[] $\lightfaceSigma^0_{n+1}(Y):= \{ A \subset Y \st A=\bigcup_{i\in\mathbb{N}} A^n_{\varphi(2n+1)}\setdifference A^n_{\varphi(2n)}, \text{ for some computable }\varphi \}$;
	\item[] $\lightfacePi^0_{n+1}(Y) := \{ Y\setdifference A \st A \in \lightfaceSigma^0_{n+1}(Y) \} $.
\end{itemize}
The lightface hierarchy can be relativized in a straightforward manner, by
defining, for $z \in \Cantor$,
\[ \lightfaceSigma^{0,z}_1(Y) := \left\{ A \subset Y \st A=\bigcup_{n\in\mathbb{N}} B_{f(n)} \text{ for some }z\text{-computable function }f \right\}, \]
and then, define the classes $\lightfacePi^{0,z}_n$, $\lightfaceSigma^{0,z}_{n+1}$, $\lightfaceDelta^{0,z}_n$ accordingly. It is important to mention that the lightface classes are universal for their corresponding boldface ones. Formally, if $\lightfaceGamma$ is a lightface class among $\lightfaceSigma^0_n,\lightfacePi^0_n$ and $\boldfaceGamma$ is the corresponding boldface pointclass, then
\[ P\in \boldfaceGamma(Y) \iff (\exists z\in\Baire)(P \in \lightfaceGamma^z(Y)), \]
see e.g.\ \cite[Thm.\ 3E.4]{MoschovakisDST}.

\smallskip

For every effective space $(Y,\sequence{B_n}{n\in\mathbb{N}})$ and every $k\ge 1$, we can define the represented spaces $(\boldfaceSigma^0_k,\repmap{\boldfaceSigma^0_k})$, $(\boldfacePi^0_k,\repmap{\boldfacePi^0_k})$, $(\boldfaceDelta^0_k,\repmap{\boldfaceDelta^0_k})$ inductively by:
\begin{itemize}
	\item $\repmap{\boldfaceSigma^0_1}(p):=\bigcup_{i\in\ran(p)} B_i$;
	\item $\repmap{\boldfacePi^0_k}(p):=\setcomplement[Y]{\repmap{\boldfaceSigma^0_k}(p)}$;
	\item $\repmap{\boldfaceSigma^0_{k+1}}(\coding{p_0,q_0,p_1,q_1,\hdots}):=\bigcup_{i\in\mathbb{N}} \repmap{\boldfaceSigma^0_k}(p_i)\setminus \repmap{\boldfaceSigma^0_k}(q_i)$;
	\item $\repmap{\boldfaceDelta^0_k}(\coding{p,q}):=\repmap{\boldfaceSigma^0_k}(p)$, iff $p,q\in\dom(\repmap{\boldfaceSigma^0_k})$ and $\repmap{\boldfaceSigma^0_k}(p)=\setcomplement[Y]{\repmap{\boldfaceSigma^0_k}(q)}$.
\end{itemize}

We notice that the $\lightfaceSigma^0_k$ sets are exactly those having a computable $\repmap{\boldfaceSigma^0_k}$-name (and similarly for $\lightfacePi^0_k$, $\lightfaceDelta^0_k$).

If $Y$ and $Y'$ are effective spaces, we say that $A\subset Y'$ is \textdef{effectively Wadge reducible} to $B\subset Y$, and write $A \le_m B$, if there is a recursive functional $f\function{Y'}{Y}$ s.t.\ $x\in A$ iff $f(x)\in B$. Let $\lightfaceGamma$ be a lightface pointclass as above and assume that $Y$ is an effective Polish space. We say that $B$ is \textdef{$\lightfaceGamma$-hard} if $A \le_m B$ for every $A\in \lightfaceGamma(\Baire)$. If $B$ is $\lightfaceGamma$-hard and $B \in \lightfaceGamma(Y)$ then we say that $B$ is \textdef{$\lightfaceGamma$-complete}. Standard examples\footnote{The standard proofs showing that the listed sets are $\boldfaceGamma$-complete for their respective class are, in fact, effective. See \cite[Sec.\ 23.A]{KechrisCDST}.} of $\lightfaceGamma$-complete sets are the following:
\begin{center}
	\renewcommand{\arraystretch}{1.5}
	\begin{tabular}{  l l  }
		$Q_2:= \{ x\in \Cantor \st (\forall^\infty m)(x(m)=0)\}$ & $\lightfaceSigma^0_2$-complete, \\
		$N_2:= \{ x\in \Cantor \st (\exists^\infty m)(x(m)=0)\}$ & $\lightfacePi^0_2$-complete, \\
		$S_3:= \{ x\in 2^{\mathbb{N}\times \mathbb{N}} \st (\exists k)(\exists^\infty m)(x(k,m)=0)\}$ & $\lightfaceSigma^0_3$-complete, \\
		$P_3:= \{ x\in 2^{\mathbb{N}\times \mathbb{N}} \st (\forall k)(\forall^\infty m)(x(k,m)=0)\}$ & $\lightfacePi^0_3$-complete,
	\end{tabular}
\end{center}
where $(\exists^\infty m)$ and $(\forall^\infty m)$ mean respectively $(\forall n)(\exists m\ge n)$ and $(\exists n)(\forall m\ge n)$.

While often there is a natural choice for an effective basis, when working with represented spaces we can exploit the representation map to induce a lightface structure in a canonical way.

Let us introduce the \textdef{Sierpi\'nski space} $\Sier := \{ 0,1 \}$. The
space $\Sier$ is endowed with the topology $\{ \emptyset, \{1\}, \Sier\}$.
This space is represented as follows: the only name for $0$ is the string
that is constantly $0$, while every other string in $\Baire$ is a name for
$1$.

We can notice that, if $(X,\repmap{X})$ is a represented space and $\reptop{X}$ is the final topology on $X$ induced by $\repmap{X}$, then the open sets $U\in\reptop{X}$ are exactly the subsets of $X$ s.t.\ the characteristic function $\charfun{U}\function{X}{\Sier}$ is realizer-continuous (see also \cite[Sec.\ 4]{Pauly16}). In particular, we can represent an open set $U\in \reptop{X}$ using a name for $\charfun{U}$. This, in turn, allows us to represent a closed set (in the final topology on $X$) via a name for its complement. Using the jumps of the Sierpi\'nski space, we can obtain an analogous characterization for the pointclasses $\boldfaceSigma^0_\xi(X)$ (\cite[Sec.\ III and Prop.\ 30]{PdBDST2015}, see also \cite{deBrecht2014}).

In other words, using the Sierpi\'nski space, we can define a representation map for the sets $\boldfaceSigma^0_k$, $\boldfacePi^0_k$, $\boldfaceDelta^0_k$, for any represented space $(X,\repmap{X})$. For separable metric spaces, the two representations are equivalent (see \cite{Pauly16,Brattka05}).

The same ideas allow us to induce a lightface structure on any represented space. Indeed, for a represented space $(X,\repmap{X})$, we can define the effectively open sets as follows:
\[ A\in \lightfaceSigma^0_1(X) \defiff \text{the characteristic function } \chi_A\function{X}{\Sier} \text{ of }A \text{ is computable.}\]
The Sierpi\'nski space is, thus, useful to obtain a notion of semi-decidability in represented spaces. For a more detailed discussion the reader is referred to \cite{CallardHoyrup2020,PdBDST2015, Pauly14}.

\section{The hyperspaces of closed and compact sets}
\label{sec:hyp_closed_compact}

The hyperspaces of closed and compact sets will play a crucial role in this
paper. The space $\boldfacePi^0_1(X)$ of closed subsets of a topological
space $X$ is usually endowed with a number of topologies we now recall. For a
more thorough presentation, the reader is referred to
\cite{Beer1993,KleinThom84}.

Let us define
\begin{gather*}
	\mathscr{U}:=\Big\{ \{F\in \hypClosed(X) \st F \cap C = \emptyset \} \st C\in \hypClosed(X) \Big\} ,\\
	\mathscr{L}:=\Big\{ \{F\in \hypClosed(X) \st F \cap U \neq \emptyset\} \st U\in \boldfaceSigma^0_1(X) \Big\}.
\end{gather*}
The topology $\tau_{UV}$ having $\mathscr{U}$ as a prebase is called \textdef{upper topology} or \textdef{upper Vietoris topology}, while the topology $\tau_{L}$ having $\mathscr{L}$ as a prebase is called \textdef{lower topology} or \textdef{lower Vietoris topology} (\cite[Def.\ 1.3.1 and def.\ 1.3.2]{KleinThom84}). The \textdef{Vietoris topology} $\tau_V$ is the topology having as a prebase the family $\mathscr{U} \cup \mathscr{L}$.

We also consider the collection $\mathscr{U}_{\hypCompact}$ defined as
\[ \mathscr{U}_{\hypCompact}:=\Big\{  \{F\in \hypClosed(X) \st F \cap K = \emptyset \} \st K\in \hypCompact(X) \Big\}, \]
where $\hypCompact(X)$ is the family of all compact subsets of $X$. The
family $\mathscr{U}_{\hypCompact}$ is a prebase for the topology $\tau_{UF}$
on $\hypClosed(X)$ called \textdef{upper Fell topology}. We define the
\textdef{Fell topology} $\tau_F$ on $\hypClosed(X)$ as the topology having as
a prebase the set $\mathscr{U}_{\hypCompact} \cup \mathscr{L}$. For this
reason, the lower Vietoris topology is also called \textdef{lower Fell
topology}.

If we restrict our attention to compact subsets of $X$, we can define the topological space $\hypCompactV(X)=(\hypCompact(X), \tau_V\restrict{\hypCompact(X)})$ obtained by endowing the family of compact subsets of $X$ with the topology induced by the Vietoris topology on $\hypClosed(X)$. This choice is motivated by the following observation: if $X$ is a metric space with distance $d$, we can define the \textdef{Hausdorff metric} $\hmetric$ on $\hypCompact(X)$ as follows:
\[ \hmetric(K,L) := \begin{cases}
	0 & \text{if } K=L=\emptyset\\
	1 & \text{if exactly one between }K \text{ and } L\text{ is }\emptyset\\
	\max\{\delta(K,L), \delta(L,K) \} &\text{otherwise }
\end{cases} \]
where $\delta(K,L) := \max_{x\in K} d(x,L)/(1+d(x,L))$. It is known that the Hausdorff metric $\hmetric$ is compatible with the Vietoris topology on $\mathcal{K}(X)$ (\cite[Ex.\ 4.21]{KechrisCDST}) and that if $X$ is Polish then so is $\hypCompactV(X)$ (\cite[Thm.\ 4.22]{KechrisCDST}).

We notice that $(\hypClosed(X),\tau_V)$ fails to be paracompact, and hence metrizable, if $X$ is not compact (\cite[Thm.\ 2]{Ke70}). The Fell topology is the preferred choice when working with closed sets, since if $X$ is Polish and locally compact then $(\hypClosed(X),\tau_F)$ is a Polish compact space and its Borel space is exactly the Effros-Borel space (\cite[Ex.\ 12.7]{KechrisCDST}). If $X$ is compact then the Fell and the Vietoris topologies coincide, and the same holds for the upper Fell and the upper Vietoris topologies.

\medskip

In the following, let $(X,d,\alpha)$ be a computable metric space. We already mentioned that the set $\boldfacePi^0_1(X)$ can be seen as a represented space, where a name for a closed set is a list of basic open balls that exhaust the complement. This representation map is often\footnote{see \cite[Def.\ 3.4]{BG2008}, \cite[Sec.\ 2]{BGP17}. In \cite{BrattkaPresser2003} it is introduced in Def.\ 3.5(1) and is denoted $\delta_{\mathrm{union}}$.} denoted $\closedNegRep$, and $\closedNegRep$-names provide \emph{negative information} on the set they represent. The represented space $(\hypClosed(X),\closedNegRep)$ is often denoted $\mathcal{A}(X)$ or $\mathcal{A}_-(X)$ in the literature.

In contrast, the \emph{positive information} representation $\closedPosRep$ for closed sets is defined as
\[ \closedPosRep(p)=F \defiff (\forall n\in \mathbb{N})(n+1 \in \ran(p) \iff F\cap B_n \neq \emptyset) ,\]
where $\sequence{B_n}{n\in\mathbb{N}}$ is the canonical enumeration of basic open balls of $X$. In other words, a $\closedPosRep$-name for $F$ is a list of all the basic open sets that intersect $F$ (\cite[Def.\ 3.1]{BG2008}, it is denoted $\delta^<$ in \cite[Def.\ 3.1(1)]{BrattkaPresser2003}). The elements of the represented space $(\hypClosed(X),\closedPosRep)$ are called \textdef{closed overt sets}, and the space is sometimes denoted $\mathcal{V}(X)$ in the literature, e.g.\ \cite[Sec.\ 2]{dBPSOvert}.

We denote the join of both positive and negative information representations with $\closedFullRep$. Formally
\[ \closedFullRep(\coding{p,q})=F \defiff \closedNegRep(p)=\closedPosRep(q)=F. \]
This is denoted $\psi_=$ in \cite[Def.\ 3.1]{BG2008} and $\delta^=$ in \cite[Def.\ 3.1]{BrattkaPresser2003}.

It is known that the representation maps $\closedNegRep$, $\closedPosRep$, and $\closedFullRep$ are admissible respectively for the upper Fell, lower Fell, and Fell topology on $\hypClosed(X)$ \cite[Sec.\ 3]{BG2008}.

We mention the following known facts.
\begin{theorem}[{\cite[Sec.\ 7]{BG2008}}]
	\thlabel{thm:computability_union_intersection}
	Let $X$ be a computable metric space. The following operations are computable:
	\begin{enumerate}
		\item $\cup\function{(\hypClosed(X),\delta)\times (\hypClosed(X),\delta)}{(\hypClosed(X),\delta)}:=(A,B)\mapsto A\cup B$, for $\delta \in \{ \closedNegRep, \closedPosRep, \closedFullRep \}$.
		\item $\bigcap\function{\infStrings{(\hypClosed(X),\closedNegRep)}}{(\hypClosed(X),\closedNegRep)}:=\sequence{A_n}{n\in\mathbb{N}}\mapsto \bigcap_{n\in\mathbb{N}} A_n$.
	\end{enumerate}
\end{theorem}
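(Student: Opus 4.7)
The plan is to handle each operation by unpacking the relevant representation and showing that the required output information can be effectively extracted from the input. I will use throughout the following preliminary fact: in a computable metric space, for any two basic open balls $\ball{\alpha(n)}{q_t}$ and $\ball{\alpha(m)}{q_s}$, one can computably enumerate all basic open balls contained in their intersection, and their union equals the intersection. This is because $\ball{\alpha(k)}{q_r} \subseteq \ball{\alpha(n)}{q_t}$ is equivalent to the computably semi-decidable inequality $d(\alpha(k), \alpha(n)) + q_r < q_t$, and in a metric space every open set is the union of the basic balls it contains.

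For the union operation with respect to $\closedNegRep$, a name of $F \in \hypClosed(X)$ is an enumeration of basic open balls whose union equals $F^c$. Given $\closedNegRep$-names $p$ and $q$ of $A$ and $B$, the identity $(A \cup B)^c = A^c \cap B^c$ expresses the target complement as $\bigcup_{i,j} (U_i \cap V_j)$, where the $U_i$ and $V_j$ are the balls listed by $p$ and $q$ respectively. By the preliminary fact each $U_i \cap V_j$ is a computable union of basic balls uniformly in $(i,j)$, so dovetailing over all pairs produces a $\closedNegRep$-name of $A \cup B$.

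The union in $\closedPosRep$ is simpler: a $\closedPosRep$-name lists exactly the indices $n$ such that $B_n$ meets the named set, and $B_n \cap (A \cup B) \neq \emptyset$ iff $B_n \cap A \neq \emptyset$ or $B_n \cap B \neq \emptyset$. Thus a $\closedPosRep$-name of $A \cup B$ is obtained by merging the two input enumerations. The case $\closedFullRep$ then follows by applying the two previous constructions componentwise to the negative and positive halves of the input.

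For the countable intersection in $\closedNegRep$, the identity $\bigl(\bigcap_n A_n\bigr)^c = \bigcup_n A_n^c$ reduces the problem to concatenating the enumerations coming from the individual $\closedNegRep$-names of the $A_n$: dovetailing all of them yields a single computable enumeration whose union is exactly $\bigl(\bigcap_n A_n\bigr)^c$. The only non-bookkeeping step in the whole argument is the preliminary fact used in the first item, so I expect no serious obstacle; the rest is routine manipulation of effective enumerations.
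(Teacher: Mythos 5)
The paper does not supply its own proof here; \thref{thm:computability_union_intersection} is cited from \cite{BG2008}, so I can only assess your argument on its own terms. Your proof is essentially correct and follows the standard route.

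One imprecision worth flagging: the claim that $\ball{\alpha(k)}{q_r}\subseteq\ball{\alpha(n)}{q_t}$ is \emph{equivalent} to $d(\alpha(k),\alpha(n))+q_r<q_t$ is false in general; the inequality is sufficient but not necessary for containment (think of a discrete space, where a small ball around a point already equals the point and may be contained in a larger ball without the triangle-inequality margin). What you are really using is the notion of \emph{formal inclusion}, and this is enough: enumerating all $(k,r)$ with $d(\alpha(k),\alpha(n))+q_r<q_t$ and $d(\alpha(k),\alpha(m))+q_r<q_s$ produces balls that union to exactly $\ball{\alpha(n)}{q_t}\cap\ball{\alpha(m)}{q_s}$, because for any $x$ in the intersection, density of $\ran(\alpha)$ gives a nearby basic point and a small enough rational radius meeting the strict triangle-inequality bound. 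This is precisely the observation, recorded in the paper, that a computable metric space with the standard basis is an effective second-countable space. With that repair to the justification of your preliminary fact, the rest of the argument --- the De Morgan reduction for $\closedNegRep$, the merging of positive enumerations for $\closedPosRep$ (taking care with the $n+1$ offset in the definition), the componentwise treatment of $\closedFullRep$, and the dovetailed concatenation for $\bigcap$ --- is correct and is the standard proof of this statement.
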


Notice however that $\cap\function{(\hypClosed(X),\closedFullRep)\times (\hypClosed(X),\closedFullRep)}{(\hypClosed(X),\closedFullRep)}$ is in general not computable, and not even continuous (see e.g.\ \cite[Ex.\ 4.29(viii)]{KechrisCDST}). For a more precise analysis of the complexity of the intersection operator see \cite[Thm.\ 7.1]{BG2008}.

\smallskip

Let us introduce the following notion:

\begin{definition}
	\thlabel{def:computably_compact}
	A compact subset $K$ of a computable metric space $(X,d,\alpha)$ is called \textdef{co-c.e.\ compact} if
	\[ \left\{ \sigma\in\baire \st K\subset \bigcup_{i<\length{\sigma}} B_{\sigma(i)} \right\} \]
	is computably enumerable. We say that $K$ is \textdef{computably compact} if it is co-c.e.\ compact and there exists a computable dense sequence in $K$.
	
	We say that a sequence $\sequence{K_n}{n\in I}$ is \textdef{uniformly co-c.e.\ compact} if each $K_n$ is co-c.e.\ compact in a computable metric space $X_n$ and the set
	\[ \left\{ (n,\sigma)\in\mathbb{N}\times\baire \st K_n \subset \bigcup_{i<\length{\sigma}} B^n_{\sigma(i)} \right\} \]
	is computably enumerable, where $B^n_k$ is the $k$-th basic open ball in $X_n$. In other words, the sequence $\family{K_n}{n\in I}$ is uniformly co-c.e.\ compact if there is a single computable function witnessing that each $K_n$ is co-c.e.\ compact.
\end{definition}

The notions of co-c.e.\ compact and computably compact are standard notions in computable analysis (see e.g.\ \cite[Def.\ 2.10]{BGP17}).
Notice that being co-c.e.\ compact implies being $\lightfacePi^0_1(X)$ and that every $\lightfacePi^0_1(X)$ subset of a co-c.e.\ compact space is co-c.e.\ compact. Clearly a computable metric space is co-c.e.\ compact iff it is computably compact. Moreover, if $K$ is co-c.e.\ compact (resp.\ computably compact) and $f\function{K}{Y}$ is computable and surjective, then $Y$ is co-c.e.\ compact (resp.\ computably compact) as well (see \cite[Prop.\ 5.3]{Pauly16}). Several equivalent conditions to being computably compact are listed in \cite[Prop.\ 5.2]{Pauly16}. The notions of co-c.e.\ compact and computably compact can be extended in a straightforward way to effective spaces.

We also mention the following simple lemma:

\begin{lemma}
	\thlabel{thm:prod_comp_compact}
	If $X$ is co-c.e.\ compact then so is $\infStrings{X}$.
\end{lemma}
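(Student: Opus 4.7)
The plan is to equip $\infStrings{X}$ with its canonical computable metric space structure (say, via the weighted metric $d^{\mathbb{N}}(\xi,\eta):=\sum_n 2^{-n}\, d(\xi_n,\eta_n)/(1+d(\xi_n,\eta_n))$, with dense sequence drawn from the eventually-constant sequences with entries in $\ran(\alpha)$), and to show that coverage of $\infStrings{X}$ by any given finite family of basic open balls is semi-decidable. The first simplification is to work with the \emph{cylinder basis} $\{C_\tau\}_{\tau\in\baire}$, where $C_\tau:=B_{\tau(0)}\times\cdots\times B_{\tau(\length{\tau}-1)}\times \infStrings{X}$. Since the tail contribution to $d^{\mathbb{N}}$ is at most $2^{1-\length{\tau}}$, each basic open ball of $\infStrings{X}$ is a c.e.\ union of cylinders (and conversely), uniformly in the index; it therefore suffices to semi-decide coverage of $\infStrings{X}$ by finitely many cylinders.

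Next, I would reduce cylinder coverage to coverage of a finite power. A finite family $\{C_{\tau_1},\ldots,C_{\tau_k}\}$ covers $\infStrings{X}$ iff, letting $L:=\max_j \length{\tau_j}$ and padding each $\tau_j$ to length $L$ with factors equal to $X$, the resulting open rectangles cover $X^L$. The crux is then to show that finite powers $X^L$ of a co-c.e.\ compact space are co-c.e.\ compact, uniformly in $L$, which I would prove by induction on $L$. The inductive step amounts to showing that $X\times Y$ is co-c.e.\ compact whenever $X$ and $Y$ are: given a finite cover by rectangles $\{B^X_{n_i}\times B^Y_{m_i}\}_{i<k}$, form the set $\mathcal{C}:=\{S\subseteq\{0,\ldots,k-1\} \st X\subseteq \bigcup_{i\in S} B^X_{n_i}\}$, which is c.e.\ by co-c.e.\ compactness of $X$. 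The rectangles cover $X\times Y$ iff $Y\subseteq \bigcup_{S\in\mathcal{C}} \bigcap_{i\in S} B^Y_{m_i}$, and dovetailing the enumeration of $\mathcal{C}$ against co-c.e.\ compactness of $Y$ semi-decides this condition.

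Putting the pieces together: to semi-decide whether a finite family of basic open balls of $\infStrings{X}$ covers $\infStrings{X}$, one dovetails the c.e.\ enumeration of cylinders contained in the union of those balls with the cylinder-coverage test from the previous paragraph. Topological compactness of $\infStrings{X}$ (classical Tychonoff, each factor being a compact metric space) guarantees that whenever the family truly covers, some finite sub-family of cylinders is enumerated for which the coverage test succeeds, witnessing co-c.e.\ compactness of $\infStrings{X}$. The main obstacle is in the finite-product induction: keeping all semi-decisions and enumerations uniform in the indexes of the factors and of the cover data, so that the resulting algorithm is the single computable enumeration required by \thref{def:computably_compact}.
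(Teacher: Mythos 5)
Your argument is correct, and the core reduction is the same one the paper uses: coverage of $\infStrings{X}$ by finitely many cylinders is equivalent to coverage of the finite power $X^N$ (with $N$ the max cylinder length) by the corresponding rectangles, and then one appeals to uniform co-c.e.\ compactness of the finite powers $X^n$. The differences are in the scaffolding around this step. The paper treats $\infStrings{X}$ directly as an effective topological space whose basic opens \emph{are} the cylinders, so the ball-to-cylinder conversion you perform (expressing each metric ball of $\infStrings{X}$ as a c.e.\ union of cylinders, then dovetailing with classical Tychonoff to extract a finite cylinder subcover) simply does not arise; and for the finite-power step the paper cites \cite[Prop.\ 5.4]{Pauly16} rather than proving it. Your inductive proof of co-c.e.\ compactness of $X\times Y$ via the family $\mathcal{C}$ of subsets $S$ of the index set with $X\subseteq\bigcup_{i\in S}B^X_{n_i}$ is a correct self-contained version of that cited result (note that the sets $\bigcap_{i\in S}B^Y_{m_i}$ are not basic, so one more layer of effective expansion and dovetailing is hiding there, but it is routine). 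One small imprecision in your last paragraph: you should dovetail over the cylinders produced by the c.e.\ expansions of the given balls, not over ``cylinders contained in the union of the balls'' --- the latter is not a semi-decidable condition since cylinders are open, not compact --- but this is clearly what you intend. In short, same strategy; you pay extra for insisting on the Cauchy/metric basis and for replacing a citation with an argument, which makes the proof more elementary but longer.
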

\begin{proof}
	The fact that the finite product of co-c.e.\ compact spaces is co-c.e.\ compact follows from \cite[Prop.\ 5.4]{Pauly16}. To prove that $\infStrings{X}$ is co-c.e.\ compact, recall that an open set in $\infStrings{X}$ is of the type $\mathcal{B}:=\prod_{j\in\mathbb{N}}B_j$, where each $B_j$ is open in $X$ and $B_j\neq X$ only for finitely many indexes. Such an open set is canonically represented via (a name for) a finite sequence $\sequence{B_j}{j<N}$ with the understanding that $B_j=X$ for every $j\ge N$.
	
	Let $\sequence{\mathcal{B}_i}{i<k}$ be a finite sequence of open subsets of $\infStrings{X}$, where $\mathcal{B}_i$ is represented by $\sequence{B^i_j}{j<N_i}$. This sequence trivially induces a finite sequence $\sequence{\mathcal{C}_i}{i<k}$ of open subsets of $X^N$, where $N:=\max_{i<k} N_i$: for every $i$ and every $j\in\{ N_i,\hdots,N-1 \}$, let $B^i_j:=X$ and define $\mathcal{C}_i:=\prod_{j<N} B^i_j$. The sequence $\sequence{\mathcal{B}_i}{i<k}$ covers $\infStrings{X}$ iff $\sequence{\mathcal{C}_i}{i<k}$ covers $X^N$. The claim follows from the fact that the sets $\sequence{X^n}{n\in\mathbb{N}}$ are uniformly co-c.e.\ compact.
\end{proof}

Since $\hypCompact(X)\subset \hypClosed(X)$, we can represent the compact subsets of $X$ using the subspace representation induced by the representation we put on $\hypClosed(X)$. At the same time, we mentioned that $\hypCompactV(X)$ is compatible with the Hausdorff metric. Letting $\beta$ be an enumeration of the finite subsets of $\ran(\alpha)$, the space $(\hypCompactV(X),\hmetric,\beta)$ is a computable metric space (as the finite subsets of $\ran(\alpha)$ are a dense subset of $\hypCompactV(X)$). In particular, $\hypCompactV(X)$ can be (canonically) endowed with the Cauchy representation $\cauchyRep$ induced by $\hmetric$ and $\beta$. The Cauchy representation for the hyperspace of non-empty compact subset of the Euclidean space was studied in \cite{BW1999} under the name $\delta_{\mathrm{Haus}}$, and then extended to generic computable metric spaces in \cite{BrattkaPresser2003}, where the symbol $\repmap{\mathrm{Hausdorff}}$ was used.

Two additional representations maps for $\hypCompact(X)$ that are used in the literature (see e.g.\ \cite[Sec.\ 4]{BdBPLow12}) are the maps $\compactCover$ and $\compactMinCover$: the former names a compact set $K$ via a list of all finite covers of $K$ with basic open balls, while $\compactMinCover$-names have the additional requirement that all basic balls in each cover have to intersect $K$. The compact sets having computable $\compactCover$-names (resp.\ $\compactMinCover$-names) are exactly the co-c.e.\ compact (resp.\ computably compact) sets. The representation maps $\compactCover$ and $\compactMinCover$ have been studied in \cite{BrattkaPresser2003,BW1999} under the names $\delta_{\mathrm{cover}}$ and $\delta_{\mathrm{min-cover}}$ respectively.

\begin{proposition}
	Let $X$ be a co-c.e.\ compact subspace of the Euclidean space $(\mathbb{R}^d,|\cdot|, \alpha_{\mathbb{Q}^d})$, where $|\cdot|$ is the Euclidean distance and $\alpha_{\mathbb{Q}^d}$ is an enumeration of $\mathbb{Q}^d$. Let $\closedNegRep':=\closedNegRep\restrict{\closedNegRep^{-1}(\hypCompact(X))}$ be the restriction of $\closedNegRep$ to names of compact subsets of $X$, and similarly let $\closedFullRep':=\closedFullRep\restrict{\closedFullRep^{-1}(\hypCompact(X))}$. The following holds on $\hypCompact(X)$:
	\begin{itemize}
		\item $\compactCover \equiv \closedNegRep'$;
		\item $\compactMinCover \equiv \closedFullRep' \equiv \cauchyRep$.
	\end{itemize}
\end{proposition}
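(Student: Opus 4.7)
The plan is to establish the six reductions underlying the two equivalence chains, with co-c.e.\ compactness of $X$ playing the crucial structural role. I will use throughout that, thanks to the Euclidean structure and the rational enumeration $\alpha_{\mathbb{Q}^d}$, the relations ``$\overline{B_m} \cap B_n = \emptyset$'' and ``$\overline{B_m} \subseteq B_n$'' between basic balls are decidable conditions on the rational centers and radii.

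For $\compactCover \equiv \closedNegRep'$, the forward reduction exploits the fact that the family $\{B_m : \overline{B_m} \cap K = \emptyset\}$ is an effective neighborhood basis of the open set $X \setminus K$. Given a $\compactCover$-name, I semi-decide $\overline{B_m} \cap K = \emptyset$ by searching for a finite cover of $K$ whose balls are all formally disjoint from $\overline{B_m}$; any such cover witnesses $K \subseteq X \setminus \overline{B_m}$, and conversely, whenever $\overline{B_m} \cap K = \emptyset$, a compactness argument (around each point of $K$ shrink to a basic ball formally disjoint from $\overline{B_m}$) produces one. For $\closedNegRep' \le \compactCover$, to recognize a candidate tuple $(B_{i_1},\ldots,B_{i_k})$ as a cover of $K$, I enumerate from the $\closedNegRep$-name the basic balls $C_1, C_2,\ldots$ disjoint from $K$ and use co-c.e.\ compactness of $X$ to semi-decide when a finite augmentation of $(B_{i_1},\ldots,B_{i_k})$ by $C_j$'s covers $X$; by compactness this happens at some finite stage exactly when $K \subseteq B_{i_1}\cup\cdots\cup B_{i_k}$.

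For $\compactMinCover \equiv \closedFullRep'$, the reduction $\closedFullRep' \le \compactMinCover$ combines the previous recognition of covers of $K$ via $\closedNegRep$ with the trivial $\Sigma^0_1$ check $B_{i_j} \cap K \neq \emptyset$ from $\closedPosRep$. Conversely, the chain $\compactMinCover \le \compactCover \le \closedNegRep'$ handles negative information, while positive information is obtained by semi-deciding $B_n \cap K \neq \emptyset$ through the search for a $\compactMinCover$-cover of $K$ containing some ball whose closure lies inside $B_n$; correctness rests on the fact that any $x \in B_n \cap K$ sits in some small basic ball $B_j$ with $\overline{B_j}\subseteq B_n$, and by applying compactness to $K \setminus B_j$ one extends $\{B_j\}$ to a full minimum cover of $K$.

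For $\cauchyRep \equiv \closedFullRep'$, the direction $\cauchyRep \le \closedFullRep'$ uses that a Cauchy name computes $d(y,K)$ to arbitrary precision via $d(y,F_n) \to d(y,K)$, making the open relations $d(y,K) > r$ and $d(y,K) < r$ both $\Sigma^0_1$: these correspond respectively to $\overline{B(y,r)} \cap K = \emptyset$ and $B(y,r) \cap K \neq \emptyset$, yielding both halves of a $\closedFullRep$-name. For the converse I construct a rapidly converging sequence of finite subsets of $\ran(\alpha)$: at stage $n$, I enumerate the $\closedNegRep$-balls disjoint from $K$ together with the $\closedPosRep$-balls of radius $<2^{-n}$ intersecting $K$, use co-c.e.\ compactness of $X$ to extract a finite subcover of $X$, and let $F_n$ be the centers of the $\closedPosRep$-balls in that subcover; by construction every $x \in K$ avoids all $\closedNegRep$-balls and must therefore lie in one of the small $\closedPosRep$-balls, placing $x$ within $2^{-n}$ of $F_n$, while every center in $F_n$ is itself within $2^{-n}$ of $K$. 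The main technical subtlety is the systematic switch from $B_m$ to $\overline{B_m}$ wherever semi-decidability is required, since $B_m \cap K = \emptyset$ is not in general $\Sigma^0_1$ (the boundary of $B_m$ might touch $K$); a minor bookkeeping point is that the bounded Hausdorff metric $\hmetric$ of the excerpt is metrically equivalent to the standard Hausdorff distance on the bounded space $X$, so the Cauchy-name arguments transfer directly.
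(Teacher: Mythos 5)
Your proof is correct in its essential structure, but it is a genuinely different route from the paper's. The paper proves the proposition almost entirely by citation: the equivalences $\compactCover \equiv \closedNegRep'$, $\compactMinCover \equiv \closedFullRep'$, and $\delta_{\mathrm{Haus}} \equiv \delta^=_{\mathscr{K}}|^{\mathscr{K}^*}$ are imported from Brattka--Weihrauch \cite{BW1999} (after observing that the bounding-ball component of $\delta^>_{\mathscr{K}}$ and $\delta^=_{\mathscr{K}}$ becomes trivial once $X$ is compact), and the only new work is handling the empty set in the chain $\cauchyRep \equiv \closedFullRep'$, which \cite{BW1999} excludes. You instead give a self-contained construction of all six reductions from first principles, using co-c.e.\ compactness of $X$ directly at each step (finite-cover detection for $\closedNegRep' \le \compactCover$, a shrink-to-formally-disjoint-balls argument for the converse, a closed-ball-inside-an-open-ball trick for the positive information in $\compactMinCover \le \closedFullRep'$, and a staged-cover construction for $\closedFullRep' \le \cauchyRep$). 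What your approach buys is a reader who does not need \cite{BW1999} in hand and sees exactly where compactness of $X$ is used; what it costs is length and some uncovered bookkeeping.

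Two points deserve tightening. First, the empty set: the paper makes an explicit case distinction because $K=\emptyset$ is exceptional under $\hmetric$ (distance $1$ from every nonempty set, so $\emptyset$ is isolated in $\hypCompactV(X)$), and your reduction $\cauchyRep \le \closedFullRep'$ via $d(y,F_n)\to d(y,K)$ implicitly assumes $K\neq\emptyset$ when interpreting $d(y,K)$; the fix is cheap (for $n\ge 1$ the $n$-th term of a Cauchy name is $\emptyset$ iff $K=\emptyset$, so you can detect and branch), but should be said. Similarly in $\closedFullRep' \le \cauchyRep$ one should note that if $F_n=\emptyset$ at some stage $n\ge1$ then the set being named is indeed $\emptyset$, so that case of the construction is coherent. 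Second, your staged construction gives $\hmetric(F_n,K)<2^{-n}$ rather than the rapid-convergence bound $\hmetric(F_n,F_m)\le 2^{-n}$; this is off by a constant and fixed by working with $2^{-n-1}$, but is worth flagging. The final remark about the bounded metric $\hmetric$ versus the ordinary Hausdorff distance is correct: on the compact $X$ the map $r\mapsto r/(1+r)$ restricts to a computable bijection of $[0,\operatorname{diam}(X)]$ with computable inverse, so the two Cauchy structures coincide.
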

\begin{proof}
	The equivalence $\compactCover \equiv \closedNegRep'$ follows from \cite[Thm.\ 4.6]{BW1999}. Precisely, the representation $\delta^>_{\mathscr{K}}$ names a compact set $K$ via a $\closedNegRep'$-name of $K$ and the index of a ball $B$ s.t.\ $K\subset \closure{B}$ (\cite[Def.\ 4.1]{BW1999}). If $X$ is compact then the equivalence $\closedNegRep'\equiv \delta^>_{\mathscr{K}}$ is straightforward.
	
	The equivalence $\compactMinCover \equiv \closedFullRep'$ follows from \cite[Cor.\ 4.7]{BW1999} using the fact that $\closedFullRep' \equiv \delta^=_{\mathscr{K}}$ for a compact $X$. Indeed, a $\delta^=_{\mathscr{K}}$-name of a compact set $K$ is a $\closedFullRep'$-name of $K$ and the index of a ball $B$ s.t.\ $K\subset \closure{B}$ (\cite[Def.\ 4.1]{BW1999}).
	
	In \cite[Thm.\ 4.10]{BW1999}, the Cauchy representation $\delta_{\mathrm{Haus}}$ (i.e.\ the restriction of $\cauchyRep$ to names of non-empty compact sets) has been proven equivalent to the restriction $\delta^=_{\mathscr{K}}|^{\mathscr{K}^*}$ of $\delta^=_{\mathscr{K}}$ to names of non-empty compact sets. We now explicitly show that $\cauchyRep\equiv \closedFullRep'$ (i.e.\ the empty set is not problematic if $X$ is co-c.e.\ compact).
	
	Recall that for every non-empty closed $G$, $\hmetric(G,\emptyset)=1$.
	\begin{description}
		\item[$\cauchyRep\le \closedFullRep'$]: As mentioned after the definition of Cauchy representation, we can think of a $\cauchyRep$-name $p$ for $F$ as a list of (indexes for) basic open balls $\sequence{\mathcal{B}_n}{n\in\mathbb{N}}$ w.r.t.\ the Hausdorff metric s.t.\ all the balls contain $F$ and the radius of $\mathcal{B}_n$ is $2^{-n}$.
		Since the empty set is isolated in $\hypCompactV(X)$, it is enough to consider $n>1$. 
		Indeed, without loss of generality we can assume that we can computably tell whether $\mathcal{B}_n$ is centered on $\emptyset$ (e.g.\ we can assume that the empty set has a unique index in the list of dense subsets of $\hypCompact(X)$).
		If $\mathcal{B}_n$ is centered on $\emptyset$ then $F=\emptyset$ (as $\mathcal{B}_n=\{\emptyset\}$). Otherwise $F\neq\emptyset$, and we can use $\delta_{\mathrm{Haus}}\le \delta^=_{\mathscr{K}}|^{\mathscr{K}^*}$ and $\delta^=_{\mathscr{K}} \le \closedFullRep'$.
		\item[$\closedFullRep'\le \cauchyRep$]: Let $\coding{p,q}$ be a $\closedFullRep'$-name for $F$, where $p$ is a negative information name and $q$ is a positive information name for $F$. If $F=\emptyset$ then $p$ is a list of basic open balls that cover $X$. On the other hand, if $F\neq\emptyset$ then $q$ eventually lists some basic open ball (in $X$) that intersects $F$ ($q$ is allowed not to produce any information at a given stage). In other words, we wait for some sufficiently large $n$ so that either $\bigcup_{i<n} B_{p(i)}$ covers $X$ or $q$ commits to some open ball at stage $n$. This allows us to determine whether $F$ is empty or not. If $F=\emptyset$ we can trivially compute a sequence of basic open balls (in $\hypCompactV(X)$) centered on $\emptyset$ with rapidly decreasing radii. Otherwise, as in the previous reduction, we can use $\closedFullRep' \le \delta^=_{\mathscr{K}}$ and $\delta^=_{\mathscr{K}}|^{\mathscr{K}^*} \le \delta_{\mathrm{Haus}}$ to produce a $\cauchyRep$-name for $F$. \qedhere
	\end{description}

\end{proof}

For the sake of readability, we adopt the following notation:
\begin{description}
	\item $\hypClosedUF(X)=(\hypClosed(X),\tau_{UF}, \closedNegRep)$ is the hyperspace of closed subsets of $X$, endowed with the upper Fell topology and the negative representation $\closedNegRep$;
	\item $\hypClosedF(X)=(\hypClosed(X),\tau_{F}, \closedFullRep)$ is the hyperspace of closed subsets of $X$, endowed with the Fell topology and the full representation $\closedFullRep$;
	\item $\hypCompactUF(X)=(\hypCompact(X),\tau_{UF}\restrict{\hypCompact(X)}, \closedNegRep')$ is the hyperspace of compact subsets of $X$, endowed with the upper Fell topology and the negative representation restricted to names of compact sets, i.e.\ $\closedNegRep'=\closedNegRep\restrict{\closedNegRep^{-1}(\hypCompact(X))}$;
	\item $\hypCompactV(X)=(\hypCompact(X),\tau_{V}\restrict{\hypCompact(X)}, \closedFullRep')$ is the hyperspace of compact subsets of $X$, endowed with the Vietoris topology and the full representation restricted to names of compact sets, i.e.\ $\closedFullRep'=\closedFullRep\restrict{\closedFullRep^{-1}(\hypCompact(X))}$.
\end{description}

The following lemma is the effective counterpart of \cite[Thm.\ 4.26]{KechrisCDST}.

\begin{lemma}
	\thlabel{thm:K(X)_computably_compact}
	If $(X,d,\alpha)$ is computably compact then so is $\hypCompactV(X)$.
\end{lemma}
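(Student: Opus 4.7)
The plan is to exhibit $\hypCompactV(X)$ as a computable metric space that is effectively totally bounded; by the standard characterization of computably compact computable metric spaces (see e.g.\ \cite[Prop.\ 5.2]{Pauly16}), this yields computable compactness. This is the effective counterpart of the classical proof that $\hypCompactV(X)$ is compact when $X$ is, with the classical total boundedness upgraded to a uniform construction of $\epsilon$-nets.

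\textbf{Key steps.}
\begin{enumerate}
\item Fix an effective enumeration $\beta$ of the finite subsets of $\ran(\alpha)$. Then $(\hypCompactV(X),\hmetric,\beta)$ is a computable metric space: the Hausdorff distance $\hmetric(\beta(i),\beta(j))$ between finite subsets of $\ran(\alpha)$ is computable in $i,j$ (a finite maximum of finite minima of values of $d$ on $\ran(\alpha)$), and classical total boundedness of $\hypCompactV(X)$ ensures that $\beta$ is $\hmetric$-dense. This already provides the ``computable dense sequence'' clause of \thref{def:computably_compact}, so it remains to establish co-c.e.\ compactness.
\item Show effective total boundedness. Given rational $\epsilon\in(0,1)$, use the co-c.e.\ compactness of $X$ to enumerate finite covers of $X$ by basic balls of radius $\epsilon/2$ and centers in $\ran(\alpha)$ until one is found; let $N_\epsilon:=\{a_1,\ldots,a_m\}\subset\ran(\alpha)$ be the set of its centers. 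I claim that the family of subsets of $N_\epsilon$ is an $\epsilon$-net in $\hypCompactV(X)$. For any non-empty compact $K$ set
\[ F_K:=\{a\in N_\epsilon : K\cap\ball{a}{\epsilon/2}\neq\emptyset\}; \]
by construction $d(x,F_K)\le\epsilon/2$ for each $x\in K$ and $d(a,K)\le\epsilon/2$ for each $a\in F_K$, so $\hmetric(K,F_K)\le (\epsilon/2)/(1+\epsilon/2)<\epsilon$ in the paper's normalized metric. The empty set is $\hmetric$-approximated by $\emptyset\in\ran(\beta)$. This whole construction is uniform in $\epsilon$.
\item Conclude: effective total boundedness plus completeness of $\hypCompactV(X)$ (inherited from the fact that it is Polish whenever $X$ is, cf.\ \cite[Thm.\ 4.22]{KechrisCDST}) gives co-c.e.\ compactness via the cited characterization. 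Together with the computable dense sequence from step (1), this is exactly computable compactness.
\end{enumerate}

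\textbf{Main obstacle.} The substantive work is the uniform construction in step (2): extracting from the computable compactness of $X$ a function $\epsilon\mapsto N_\epsilon$ and then verifying that the powerset of $N_\epsilon$ is an $\epsilon$-net in $\hypCompactV(X)$. The only subtlety beyond the classical argument is carrying out the estimate in the bounded Hausdorff metric $\hmetric$ (the inequality $t/(1+t)<\epsilon$ when $t\le\epsilon/2$) and handling the degenerate case $K=\emptyset$. Once step (2) is in place, the conclusion is a direct appeal to standard machinery.
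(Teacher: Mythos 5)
Your proof is correct, and it takes a genuinely different route from the paper's. You reduce the claim to effective total boundedness of $(\hypCompactV(X),\hmetric,\beta)$: from a computable finite $\epsilon/2$-cover of $X$ by balls centered on $\ran(\alpha)$, you exhibit an $\epsilon$-net in $\hypCompactV(X)$ consisting of all subsets of the center set $N_\epsilon$, and then invoke the characterization of co-c.e.\ compactness for classically compact computable metric spaces via effective total boundedness. The paper works directly with the definition: given a finite list of basic balls of $\hypCompactV(X)$, it constructs a computably finitely branching tree of depth bounded by the length of the list, with nodes $\sigma$ labeled by uniformly co-c.e.\ compact subsets $Y_\sigma\subseteq X$ obtained by removing one constituent open ball of $X$ at each level, and shows the list covers $\hypCompactV(X)\setminus\{\emptyset\}$ iff every leaf has $Y_\sigma=\emptyset$ (a c.e.\ condition). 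Your argument is more modular, offloading the combinatorics to a general metric-space fact; the paper's is self-contained but must track the two-sided structure of Vietoris balls (``contained in'' together with ``meets every piece'') by hand. One caveat worth fixing: the equivalence you invoke --- effective total boundedness $\Leftrightarrow$ co-c.e.\ compactness for a classically compact computable metric space --- is folklore, but it is not what \cite[Prop.\ 5.2]{Pauly16} states (that proposition characterizes computable compactness of represented spaces without mentioning total boundedness). You should either cite a source giving the metric-space form, or spell out the short argument: search over rational $\eta>0$ for an $\eta$-net $M_\eta$ such that every $a\in M_\eta$ satisfies $d(a,c_i)<r_i-\eta$ for some ball $\ball{c_i}{r_i}$ in the given list; this condition is semi-decidable and, by a Lebesgue-number argument using the classical compactness of $\hypCompactV(X)$, is satisfied for some $\eta$ exactly when the list is a cover.
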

\begin{proof}
	Recall that a basic open ball in $\hypCompactV(X)$ is a set of the type $\mathcal{B}_{\coding{n,m}} = \{K\in\hypCompact(X) \st \hmetric(K,\beta(n))< q_m\}$, where $\beta$ is the fixed  enumeration of the finite subsets of $\ran(\alpha)$ and $\sequence{q_i}{i\in\mathbb{N}}$ is the canonical enumeration of $\mathbb{Q}^+$. In particular, if $\beta(n)=\{b_0,\hdots, b_{k-1} \}$ then \[ \mathcal{B}_{\coding{n,m}}=\left\{K\in\hypCompact(X) \st K\subset \bigcup_{i<k} \ball{b_i}{q_m} \land (\forall i<k)(K\cap \ball{b_i}{q_m}\neq \emptyset) \right\}. \]

Let $\sequence{\mathcal{B}_i=\mathcal{B}_{\coding{n_i,m_i}}}{i<k}$ be a finite sequence of basic open balls in $\hypCompactV(X)$. We want to describe a c.e.\ procedure to check whether $\sequence{\mathcal{B}_i}{i<k}$ covers $\hypCompactV(X)$. For the sake of readability, let us define $r(i):=q_{m_i}$
and $D_i:=\bigcup_{b\in \beta(n_i)} \ball{b}{r(i)}$ for every $i<k$. We first check if $r(i)>1$ for some $i<k$ (any ball with radius $>1$ covers $\hypCompactV(X)$, and in this case we can give a positive answer) or if there is $j<k$ s.t.\ $\beta(n_j)=\emptyset$ (recall that the empty set is
isolated, and a ball with radius $\le 1$ covers $\emptyset$ iff it is centered on it, hence if such a $j$ is missing we give a negative answer). Since these two conditions are computable, to prove the result it is enough to show that there is a c.e.\ procedure to determine if a finite sequence
$\sequence{\mathcal{B}_i}{i<k}$ with $r(i) \le 1$ and $\beta(n_i) \neq \emptyset$ for all $i<k$ covers $\hypCompactV(X)\setminus \{\emptyset\}$.
	
We now define a c.e.\ subtree $T$ of $(\mathbb{N}\times\ran(\alpha))^{\le k}$ (where $k$ is the length of the fixed finite sequence of basic open balls) where each string $\sigma\in T$ is labeled with a compact subset $Y_\sigma$ of $X$. A string $\sigma=\str{(a_0,b_0),\hdots, (a_{h-1},b_{h-1})}$ is in $T$ iff
\[ h\le k \land (\forall i<h)(a_i<k \land b_i\in \beta(n_{a_i})) \land (\forall i,j<h, i\neq j)(a_i\neq a_j)\land Y_{\sigma[h-1]}\subset D_{a_{h-1}}, \]
where $\sigma[h-1]$ denotes the prefix of $\sigma$ of length $h-1$. The string $\sigma$ is labeled with $Y_\sigma:= X\setminus \bigcup_{i<h} \ball{b_i}{r(a_i)}$.
	
Notice that, since $X$ is computably compact, then so are all the sets $Y_\sigma$. In fact, they are uniformly computably compact: given a finite sequence $\sequence{B_i}{i<n}$ of open balls, we can compute the index of a computable functional witnessing that $Y:=X\setminus \bigcup_{i<n} B_i$ is
computably compact. This follows from 
\[ Y \subset \bigcup_{j<\length{\tau}} B_{\tau(j)} \iff X \subset  \bigcup_{i<n} B_i \cup \bigcup_{j<\length{\tau}} B_{\tau(j)}~. \]
In particular, this shows that the condition $Y_{\sigma[h-1]}\subset D_{a_{h-1}}$ (and hence $\sigma\in T$) is computably enumerable.

	We claim that
	\[ \sequence{\mathcal{B}_i}{i<k} \text{ covers }\hypCompactV(X)\setminus \{\emptyset\} \iff \text{ for each leaf }\sigma\in T,~ Y_\sigma = \emptyset. \]
This implies the computable compactness of $\hypCompactV(X)$, as the condition on the right is computably enumerable ($Y_\sigma=\emptyset$ is equivalent to $Y_\sigma$ being covered by the empty set and the quantification on $\sigma$ is bounded because $T$ is computably finitely branching).
	
	For the right-to-left direction, assume that there is a non-empty $K\in \hypCompact(X)\setminus \bigcup_{i<k} \mathcal{B}_i$. We show that there is a leaf $\sigma$ s.t.\ $Y_\sigma\neq\emptyset$. We proceed iteratively as follows, defining a list of sequences $\sigma_s$ s.t.\ $K\subset Y_{\sigma_s}$: let $\sigma_{-1}:=\str{}$. 
	At stage $s<k$, we look for some unmarked (i.e.\ not selected in any previous stage) $a_s<k$ s.t.\ $Y_{\sigma_{s-1}}\subset D_{a_s}$. If such a choice is possible then, since $K\notin \mathcal{B}_{a_s}$ and $K\subset D_{a_s}$, there is $b_s\in \beta(n_{a_s})$ s.t.\ $K\cap \ball{b_s}{r(a_s)}=\emptyset$. In particular, $K\subset Y_{\sigma_{s-1}} \setminus \ball{b_s}{r(a_s)}$, hence letting $\sigma_s:=\sigma_{s-1}\concat\str{(a_s,b_s)}$, we have $K\subset Y_{\sigma_s}$, which implies that $Y_{\sigma_s}\neq\emptyset$. We then mark $a_s$ as visited and go to the next stage. If there is no $a_s<k$ that satisfies the requirements then $\sigma:=\sigma_{s-1}$ is a leaf for $T$ but $Y_{\sigma}\neq\emptyset$ (as $K\subset Y_\sigma$). Notice that looking for some unmarked $a_s$ guarantees that no choice of $a_s$ is possible at stage $k+1$.

    Let us now prove the left-to-right implication. Assume that $\sigma=\str{(a_0,b_0),\hdots, (a_{h-1},b_{h-1})}$ is a leaf for $T$. Notice that, for each $j<h$ and each $K\in\hypCompact(X)\setminus \{\emptyset \}$, if $K\in \mathcal{B}_{a_j}$ then for every $b\in \beta(n_{a_j})$, $K\cap
    \ball{b}{r(a_j)}\neq\emptyset$, and hence $K\not\subset Y_{\sigma[j+1]}$. In particular, if $Y_{\sigma}\neq\emptyset$ then, for every $j<h$, $Y_{\sigma}\notin \mathcal{B}_{a_j}$. Moreover, since $\sigma$ is a leaf, there is no pair $(a,b)$ s.t.\ $Y_\sigma \subset D_a$, which implies that $Y_\sigma \notin \bigcup_{i<k} \mathcal{B}_i$.
\end{proof}

\begin{proposition}
	\thlabel{thm:hypclosed_Rd_eff_compact}
	$\hypClosedF(\mathbb{R}^d)$ is computably compact.
\end{proposition}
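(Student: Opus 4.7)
The plan is to realise $\hypClosedF(\mathbb{R}^d)$ as the image of a computably compact space under a computable surjection, and then invoke the fact, recorded in the paragraph after \thref{def:computably_compact}, that the computable surjective image of a computably compact space is itself computably compact.

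Let $X^+:=\mathbb{R}^d\cup\{\infty\}$ denote the one-point compactification of $\mathbb{R}^d$. Equipping $X^+$ with the chordal metric arising from stereographic projection onto $S^d\subset\mathbb{R}^{d+1}$ makes $X^+$ into a computably compact metric space, with, say, $\mathbb{Q}^d\cup\{\infty\}$ as a computable dense subset (the chordal distance is a rational function of the coordinates, hence computable on rational pairs). The inclusion $\iota\function*{\mathbb{R}^d}{X^+}$ is then a computable homeomorphism onto the open set $X^+\setminus\{\infty\}$, so every basic ball of $\mathbb{R}^d$ is a computable open subset of $X^+$ and, conversely, the trace on $\mathbb{R}^d$ of any basic ball of $X^+$ is a computable open subset of $\mathbb{R}^d$.

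By \thref{thm:K(X)_computably_compact}, $\hypCompactV(X^+)$ is computably compact. I would then define $\Phi\function*{\hypCompactV(X^+)}{\hypClosedF(\mathbb{R}^d)}$ by $\Phi(L):=L\cap\mathbb{R}^d$; this is closed in $\mathbb{R}^d$ because $\mathbb{R}^d$ is open in $X^+$. The map $\Phi$ is surjective: for any closed $F\subset\mathbb{R}^d$, the $X^+$-closure $\closure{F}^{X^+}$ is closed in the compact space $X^+$, hence compact, and it equals $F$ when $F$ is bounded and $F\cup\{\infty\}$ otherwise, so $\Phi(\closure{F}^{X^+})=F$ in either case.

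The remaining task is to show that $\Phi$ is computable with respect to $\closedFullRep'$ on $\hypCompactV(X^+)$ and $\closedFullRep$ on $\hypClosedF(\mathbb{R}^d)$. For positive information, a basic ball $B$ of $\mathbb{R}^d$ meets $\Phi(L)$ iff it meets $L$ viewed as an open subset of $X^+$, which is semi-decidable from positive information on $L$. For negative information, we use $\mathbb{R}^d\setminus\Phi(L)=\mathbb{R}^d\cap(X^+\setminus L)$: enumerate the basic balls of $X^+$ appearing in the negative enumeration of $L$, intersect each with $\mathbb{R}^d$, and decompose the resulting open subsets of $\mathbb{R}^d$ as c.e.\ unions of basic balls of $\mathbb{R}^d$ via $\iota$, thereby producing a $\closedNegRep$-name of $\Phi(L)$. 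The main burden of the proof is this fiddly but routine translation between the effective topological structures on $X^+$ and $\mathbb{R}^d$; once carried out, the conclusion is immediate.
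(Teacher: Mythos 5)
Your proof is correct and follows essentially the same strategy as the paper's: exhibit $\hypClosedF(\mathbb{R}^d)$ as a computable surjective image of a computably compact hyperspace, and conclude via \thref{thm:K(X)_computably_compact} together with the fact (noted after \thref{def:computably_compact}) that computable compactness is preserved under computable surjections. The only substantive divergence is the choice of compactification: the paper fixes a computable homeomorphism $\varphi\colon(0,1)^d\to\mathbb{R}^d$ and maps $K\in\hypCompactV([0,1]^d)$ to $\varphi(K\cap(0,1)^d)$, whereas you use the one-point compactification $X^+=\mathbb{R}^d\cup\{\infty\}$ and take $\Phi(L)=L\cap\mathbb{R}^d$. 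Your variant is arguably a little cleaner, since the discarded part is a single point rather than the whole boundary $\boundary([0,1]^d)$ and no auxiliary homeomorphism is needed, at the minor cost of having to verify that $X^+$ is a computably compact metric space (which, as you note, follows from stereographic projection onto $S^d$). Both computability arguments for the positive and negative information names are the same standard enumeration game, and both surjectivity arguments take a closure of $F$ in the compactification and intersect back down.
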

\begin{proof}
	We show that there is a computable surjection $f\function{\hypCompactV([0,1]^d)}{\hypClosedF(\mathbb{R}^d)}$, and the claim will follow using \thref{thm:K(X)_computably_compact} and the remarks following \thref{def:computably_compact}. Fix a computable homeomorphism $\varphi\function{(0,1)^d}{\mathbb{R}^d}$ and define
	\[ f(K):=\varphi(K\cap (0,1)^d)~. \]
	It is easy to see that, for $K\in\hypCompact([0,1]^d)$, $f(K)$ is closed. 
	Moreover, $f$ is surjective: for every $F\in \hypClosed(\mathbb{R}^d)$, $\varphi^{-1}(F)$ is closed in the relative topology of $(0,1)^d$. If we denote with $G$ its closure w.r.t.\ the relative topology of $[0,1]^d$, we have that $G\setdifference \varphi^{-1}(F) \subset \boundary([0,1]^d)$, hence, in particular, $f(G)=F$.
	
	Finally, we show that $f$ is computable. Recall that both $\hypCompactV([0,1]^d)$ and $\hypClosedF(\mathbb{R}^d)$ are admissibly represented with the full information representation. Let $\coding{p,q}\in \Baire$ be a name for $K\in\hypCompact([0,1]^d)$. To compute a negative information name for $f(K)$ from $p$, notice that, since $\varphi^{-1}$ is a computable homeomorphism (and, in particular, it is a total computable surjection), as mentioned in the proof of \cite[Prop.\ 3.7]{BdBPLow12} we have that the map $\function*{\hypClosedUF(Y)}{\hypClosedUF(X)}:=A\mapsto \varphi(A)$. Intuitively:  for every basic open $B\subset \setcomplement[{[0,1]^d}]{K}$ we can computably list a sequence of basic open balls of $\mathbb{R}^d$ exhausting $\varphi(B)$.
	On the other hand, notice that a basic open ball $B$ of $\mathbb{R}^d$ intersects $f(K)$ iff there is $i\in\mathbb{N}$ s.t.\ $B_{q(i)}\subset \varphi^{-1}(B)$ (this follows from the fact that $\varphi$ is a homeomorphism). In particular, to produce a positive information name for $f(K)$, we list $B_n\subset \mathbb{R}^d$ whenever we find some $i$ s.t.\ $B_{q(i)}\subset \varphi^{-1}(B_n)$ (which is a computable condition).
\end{proof}

Recall that, if $X$ is not compact, then the hyperspace $(\hypClosed(X),\tau_V)$ of closed subsets $X$ endowed with the Vietoris topology is not metrizable. We now show that it is not even admissibly represented.

\begin{proposition}
	\thlabel{thm:upper_vietoris_no_pseudobase}
	Let $\hypClosedUV(X)=(\hypClosed(X),\tau_{UV})$ denote the hyperspace of closed subsets of $X$ endowed with the upper Vietoris topology $\tau_{UV}$. The space $\hypClosedUV(\mathbb{R})$ (and hence $\hypClosedUV(\mathbb{R}^d)$) does not have a countable pseudobase. In particular, it is not second-countable and it is not admissibly represented.
\end{proposition}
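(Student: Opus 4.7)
By Schröder's characterization (\thref{thm:characterization_admissibility}), a $T_0$ space admits an admissible representation if and only if it has a countable pseudobase; moreover, every countable base is automatically a countable pseudobase. Since $\hypClosedUV(\mathbb{R})$ is $T_0$, proving that it has no countable pseudobase will yield all three claims simultaneously: no countable pseudobase, failure of second-countability, and failure of admissible representability. The plan is therefore to rule out countable pseudobases by diagonalization.

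I would argue by contradiction: suppose $\mathcal{B}=\{B_m\}_{m\in\mathbb{N}}$ is a countable pseudobase. The plan is to focus on the point $\mathbb{N}\in\hypClosed(\mathbb{R})$ (the integers, a closed but non-compact subset of $\mathbb{R}$). Any basic open neighborhood of $\mathbb{N}$ in the upper Vietoris topology has the form $U_V:=\{G\in\hypClosed(\mathbb{R}):G\subset V\}$ for some open $V\supset\mathbb{N}$, and such $V$'s are conveniently parameterized by strictly positive sequences $\eta\in(0,1)^{\mathbb{N}}$ via $V_\eta:=\bigcup_{k\in\mathbb{N}}(k-\eta_k,k+\eta_k)$. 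For each integer $k$ the sequence $F^{(k)}_n:=\mathbb{N}\cup\{k+1/n\}$ ($n\ge 2$) converges to $\mathbb{N}$, because any open $V\supset\mathbb{N}$ contains a positive-thickness interval around $k$. To each $B_m$ I associate its footprint $W_m:=\bigcup_{G\in B_m}G\subset\mathbb{R}$ and, for each $k$, the index $n_m^{(k)}:=\min\{n\ge 2:F^{(k)}_n\in B_m\}\in\mathbb{N}\cup\{\infty\}$: a necessary condition for $B_m$ to witness the pseudobase property at a triple $(\mathbb{N},U_{V_\eta},F^{(k)})$ is that $n_m^{(k)}<\infty$ and $W_m\subset V_\eta$, the latter forcing $\eta_j>1/n_m^{(j)}$ for every $j\in K_m:=\{k:n_m^{(k)}<\infty\}$, since $W_m$ contains the point $j+1/n_m^{(j)}$.

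I would then aim to derive a contradiction by diagonally constructing $\eta$: enumerate $\mathcal{B}$ and process each $B_m$ one at a time. If $K_m=\emptyset$, then $B_m$ already handles no $F^{(k)}$ and is defeated trivially; otherwise I would pick $k_m\in K_m$ (chosen to avoid overloading integers already selected) and commit $\eta_{k_m}$ to a value strictly less than $1/n_m^{(k_m)}$. Setting $\eta_k:=1$ at unchosen integers produces a strictly positive sequence, so $V_\eta\supset\mathbb{N}$ is open; by construction, no $B_m$ can witness the pseudobase condition at the triple $(\mathbb{N},U_{V_\eta},F^{(k_m)})$, since the thickness of $W_m$ at $k_m$ exceeds $\eta_{k_m}$, giving the desired contradiction.

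The delicate point—and the main obstacle—is the combinatorial routing of the assignment $m\mapsto k_m$: if infinitely many $B_m$'s have $K_m$ concentrated on a single integer $k$ with $\sup_m n_m^{(k)}=\infty$, no positive $\eta_k$ defeats them all simultaneously at $k$. To circumvent this, one would enrich the family of test-sequences to the uncountable one $F^{(k,\alpha)}_n:=\mathbb{N}\cup\{k+\alpha/n\}$ for $\alpha\in(0,1)$: each converges to $\mathbb{N}$ in $\tau_{UV}$, and pigeonhole on the countable $\mathcal{B}$ would force some $B_m$ to host uncountably many of these sequences, making $W_m$'s thickness profile rigid enough that a diagonally-chosen $\eta$ must violate $W_m\subset V_\eta$. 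This is the step I expect to require the most care, being reminiscent of the classical diagonalization showing that box-product topologies on $\mathbb{R}^{\mathbb{N}}$ fail to be first-countable.
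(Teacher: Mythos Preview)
Your approach has a genuine gap, and the ``delicate point'' you flag is not an artifact of the setup but a real obstruction. The negation of the pseudobase condition requires a \emph{single} triple $(x,U,(y_n))$ such that every $B_m$ fails; your diagonalization, as written, defeats $B_m$ only at the sequence $F^{(k_m)}$, and there is no evident way to splice these into one sequence. Concretely, consider the countable family $B_{(k,N)}:=\{\mathbb{N}\}\cup\{F^{(k)}_n:n\ge N\}$ indexed by $k\in\mathbb{N}$, $N\ge 2$. For any choice of $\eta$ and any single test sequence $F^{(j)}$, the set $B_{(j,N)}$ with $N>1/\eta_j$ contains $\mathbb{N}$, contains the tail $\{F^{(j)}_n:n\ge N\}$, and lies inside $U_{V_\eta}$; so this family survives. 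Your enrichment to $F^{(k,\alpha)}$ does not clearly escape this: pigeonhole may force some $B_m$ to absorb uncountably many sequences, but you still owe \emph{one} sequence and \emph{one} $\eta$ defeating every $B_m$ simultaneously, and the sketch does not produce them.

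The paper sidesteps the whole difficulty by diagonalizing over the closed set rather than over the neighborhood. Instead of fixing $\mathbb{N}$ and searching for $\eta$, it builds an unbounded discrete set $F=\{x_i:i\in\mathbb{N}\}$ one point at a time: at stage $i$, if $\mathcal{P}_i$ has an unbounded member $P_i$, pick $y_i\in P_i$ beyond all previously placed $x_j$'s and then place $x_i>y_i+1$; otherwise just set $x_i$ large. Taking $\mathcal{U}:=\{G:G\subset F_{1/4}\}$ (the $1/4$-thickening of $F$) and the \emph{constant} sequence at $F$, every $\mathcal{P}_i$ fails: either all its members are bounded (so $F\notin\mathcal{P}_i$), or $P_i\notin\mathcal{U}$ because $y_i\in P_i$ lies at distance $>1$ from every $x_j$. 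There is no routing problem because each stage constrains only the next $x_i$ and the stages do not interact. A further payoff of avoiding non-constant sequences is that the same $F$ and $\mathcal{U}$ immediately witness failure of a countable pseudobase for the finer Vietoris topology as well, which the paper exploits in the subsequent corollary.
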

\begin{proof}
Fix a countable sequence $\family{\mathcal{P}_i}{i\in\mathbb{N}}$ of subsets
of $\hypClosedUV(\mathbb{R})$. To show that
$\family{\mathcal{P}_i}{i\in\mathbb{N}}$ is not a pseudobase we build a
closed set $F$ and an open set $\mathcal{U}$ which contains $F$ s.t.\ for
every $i$, either $F\notin \mathcal{P}_i$ or $\mathcal{P}_i \not\subset
\mathcal{U}$. We define $F:=\{x_i\st i \in\mathbb{N} \}$, where
$\sequence{x_i}{i\in\mathbb{N}}$ is a strictly increasing sequence
iteratively defined as follows: for each $i$, let $n_i$ be the smallest
integer greater than $x_j+1$ for every $j<i$ (if $i=0$ we let $n_i:=0$).
Choose an unbounded $P_i\in \mathcal{P}_i$. If there is none we just define
$x_i:=n_i$. Let $y_i \in P_i \cap [n_i, \infty)$ and choose $x_i$ s.t.\
$x_i>y_i+1$. In particular $d(y_i,\family{x_j}{j\le i})>1$.

	Notice that, for every $i\neq j$, $d(x_i,x_j)>1$, hence the sequence $\sequence{x_i}{i\in\mathbb{N}}$ does not have accumulation points. In particular, the set $F:=\{ x_i \st i \in \mathbb{N}\}$ is closed and unbounded. Fix $\varepsilon$ sufficiently small, e.g.\ $\varepsilon = 1/4$, and define the open set $F_\varepsilon := \{ x\in\mathbb{R} \st d(x,F)<\varepsilon \}$ and $\mathcal{U}:=\{ G\in \hypClosed(\mathbb{R})\st G\subset F_\varepsilon \}$.

	The open set $\mathcal{U}$ and the closed set $F$ witness the fact that $\family{\mathcal{P}_i}{i\in\mathbb{N}}$ is not a pseudobase. Indeed, for every $i$, either every $P\in \mathcal{P}_i$ is bounded (and hence $F\notin \mathcal{P}_i$), or the set $P_i$ defined above witnesses that $\mathcal{P}_i\not\subset \mathcal{U}$ (as by construction $d(y_i,F)>1$).

	This implies also that $\hypClosedUV(\mathbb{R})$ is not second-countable, as every base is a pseudobase. The fact that it is not admissibly represented follows by \thref{thm:characterization_admissibility}. The claim generalizes to $\hypClosedUV(\mathbb{R}^d)$ as every pseudobase of $\hypClosedUV(\mathbb{R}^d)$ induces a pseudobase on $\hypClosedUV(\mathbb{R})$ by projection.
\end{proof}

\begin{corollary}
	The space $(\hypClosed(\mathbb{R}),\tau_V)$ (and hence $(\hypClosed(\mathbb{R}^d),\tau_V)$) does not have a countable pseudobase. In particular, it is not second-countable and it is not admissibly represented.
\end{corollary}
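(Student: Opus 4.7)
The plan is to show that the construction given in the proof of \thref{thm:upper_vietoris_no_pseudobase} transfers essentially verbatim to the Vietoris topology. First, since $\tau_V$ has as a prebase $\mathscr{U}\cup\mathscr{L}$ while $\tau_{UV}$ has $\mathscr{U}$, we have $\tau_{UV}\subset\tau_V$. Consequently, the open set $\mathcal{U}=\{G\in\hypClosed(\mathbb{R})\st G\subset F_\varepsilon\}$ produced in the proof of \thref{thm:upper_vietoris_no_pseudobase} remains $\tau_V$-open, and the closed unbounded set $F$ constructed there still belongs to it.

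Next, given any countable family $\family{\mathcal{P}_i}{i\in\mathbb{N}}$ of subsets of $\hypClosed(\mathbb{R})$, I would reuse verbatim the inductive definition of $F=\{x_i\st i\in\mathbb{N}\}$ from the proof of \thref{thm:upper_vietoris_no_pseudobase}, so that for every $i$ we have either $F\notin\mathcal{P}_i$ (when all members of $\mathcal{P}_i$ are bounded) or a witness $P_i\in\mathcal{P}_i$ containing a point at distance $>1$ from $F$, whence $\mathcal{P}_i\not\subset\mathcal{U}$. Testing the pseudobase condition on the triple consisting of $\mathcal{U}$, the point $F\in\mathcal{U}$, and the constant sequence $y_n:=F$ (which converges to $F$ in any topology), we conclude that no $\mathcal{P}_i$ can play the role of $B$, so $\family{\mathcal{P}_i}{i\in\mathbb{N}}$ is not a pseudobase of $(\hypClosed(\mathbb{R}),\tau_V)$.

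The corollary's remaining assertions follow mechanically: every base is a pseudobase (for a base $\mathcal{B}$, open $U\ni x$, and sequence $y_n\to x$, choose $B\in\mathcal{B}$ with $x\in B\subset U$, then $y_n$ is eventually in $B$ because $B$ is open), so failing to admit a countable pseudobase implies failing to be second-countable; by \thref{thm:characterization_admissibility} this rules out the existence of an admissible representation. The extension from $\mathbb{R}$ to $\mathbb{R}^d$ goes by projection, exactly as in \thref{thm:upper_vietoris_no_pseudobase}. The whole argument is essentially obstruction-free: the only thing one needs to verify is that enlarging $\tau_{UV}$ to $\tau_V$ does not affect the defining properties of $\mathcal{U}$ and $F$ used in the earlier proposition, which is immediate since $\mathcal{U}$ is $\tau_{UV}$-open.
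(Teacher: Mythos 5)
Your proposal is correct and follows essentially the same route as the paper: you observe that $\tau_{UV}\subset\tau_V$ so $\mathcal{U}$ stays open, and you make explicit the key point — that the pseudobase condition was refuted using only the constant sequence at $F$, which converges in every topology — which is exactly the observation the paper's proof invokes (remarking that a nontrivial $\tau_{UV}$-convergent sequence would not transfer). The remaining deductions (base $\Rightarrow$ pseudobase, admissibility via \thref{thm:characterization_admissibility}, projection to handle $\mathbb{R}^d$) also match the paper.
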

\begin{proof}
	This follows from the proof of \thref{thm:upper_vietoris_no_pseudobase}. Indeed,  the above proof only uses a closed set $F\in \hypClosed(\mathbb{R})$ and an open set $\mathcal{U}\subset \hypClosedUV(\mathbb{R})$ to show that no countable subfamily of $\hypClosed(\mathbb{R})$ is a pseudobase. Since the upper Vietoris topology is coarser than the Vietoris topology, the same argument applies to $(\hypClosed(\mathbb{R}),\tau_V)$. The claim would not follow immediately if, in the proof of \thref{thm:upper_vietoris_no_pseudobase} we would have exploited a convergent sequence to $F$, as convergence is a weaker notion in the upper Vietoris topology.
\end{proof}

We conclude this section with the effective counterpart of \cite[Lem.\ 1.3]{AndrMarcODE97}:

\begin{lemma}
	\thlabel{thm:effective_AM}
	Let $X,Y$ be computable metric spaces. If $Y$ is co-c.e.\ compact then, for every $F\in \lightfacePi^0_1(X\times Y)$, $\proj_X F \in \lightfacePi^0_1(X)$. If $Y=\bigcup_{n\in\mathbb{N}} Y_n$ where the sequence $\sequence{Y_n}{n\in\mathbb{N}}$ is uniformly co-c.e.\ compact, then for every $F\in \lightfaceSigma^0_2(X\times Y)$, $\proj_X F \in \lightfaceSigma^0_2(X)$.
\end{lemma}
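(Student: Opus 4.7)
The plan is to reduce both parts to the following standard reformulation of co-c.e.\ compactness (see e.g.\ \cite[Prop.\ 5.2]{Pauly16}): a space $Y$ is co-c.e.\ compact iff the ``universal quantifier'' $\forall_Y\function{\reptop{Y}}{\Sier}$, defined by $\forall_Y(U)=1 \iff Y\subseteq U$, is computable; and if $\sequence{Y_k}{k\in\mathbb{N}}$ is uniformly co-c.e.\ compact then the maps $\forall_{Y_k}$ are computable uniformly in $k$. This equivalence is the main conceptual input of the proof; everything else is a uniform application of the Curry/uncurry isomorphism available in admissibly represented spaces.

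For the first statement, fix $F\in \lightfacePi^0_1(X\times Y)$. Its complement $(X\times Y)\setminus F$ is effectively open, so $\charfun{(X\times Y)\setminus F}\function{X\times Y}{\Sier}$ is computable and currying yields a computable map $X\to \cont[][]{Y,\Sier}$ sending $x$ to a name of the open set $U_x:=\{y\in Y\st (x,y)\notin F\}$. Composing with $\forall_Y$ produces a computable function $X\to \Sier$ whose value at $x$ equals $1$ iff $Y\subseteq U_x$, iff $\{x\}\times Y\subseteq (X\times Y)\setminus F$, iff $x\notin \proj_X F$. Therefore $X\setminus \proj_X F\in \lightfaceSigma^0_1(X)$, i.e.\ $\proj_X F\in \lightfacePi^0_1(X)$.

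For the second statement, I first observe that in a computable metric space every effectively open set is a uniform countable union of effectively closed sets (each basic open ball is the union of the closed balls with the same center and rationally shrunk radii), so the simplification of $\lightfaceSigma^0_2$ valid in the boldface case carries over: every $F\in \lightfaceSigma^0_2(X\times Y)$ may be written as $F=\bigcup_n F_n$ with $\sequence{F_n}{n\in\mathbb{N}}$ uniformly $\lightfacePi^0_1$. Each $Y_k$ is $\lightfacePi^0_1$ uniformly in $k$ (being co-c.e.\ compact), so $F_n\cap (X\times Y_k)\in \lightfacePi^0_1(X\times Y)$ uniformly in $(n,k)$. Replaying the argument of the first part with $Y_k$ in place of $Y$ and using the uniform computability of $\forall_{Y_k}$, I obtain $\proj_X(F_n\cap (X\times Y_k))\in \lightfacePi^0_1(X)$ uniformly in $(n,k)$; since projection commutes with unions and $Y=\bigcup_k Y_k$,
\[ \proj_X F = \bigcup_{n,k} \proj_X\!\big(F_n\cap (X\times Y_k)\big) \in \lightfaceSigma^0_2(X). \]
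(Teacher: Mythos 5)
Your proof is correct, and it reaches the same conclusion by a more abstract route than the paper. Where the paper works at the level of names and the effective basis --- fixing a computable enumeration $\setcomplement{F}=\bigcup_n B^X_{p(n)_0}\times B^Y_{p(n)_1}$, invoking the effective-basis function $\varphi_p$, and writing out a c.e.\ predicate that enumerates the basic balls inside $\setcomplement{(\proj_X F)}$ --- you instead invoke two pieces of represented-space machinery as black boxes: the characterization of co-c.e.\ compactness by the computability of the universal quantifier $\forall_Y\colon \reptop{Y}\to\Sier$, and the cartesian closedness of represented spaces (computable currying), so that $\charfun{\setcomplement{F}}$ computable immediately yields a computable $x\mapsto U_x$ and one concludes by composing with $\forall_Y$. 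The underlying computation is the same; the gain of your phrasing is conceptual clarity and brevity, while the paper's explicit version is self-contained and --- more to the point here --- makes transparent exactly which hypotheses are used, which is what lets the authors extract the scoped version \thref{thm:effective_AM_eff_spaces} for effective second-countable spaces. Your observation for the second half, that in a computable metric space every $\lightfaceSigma^0_2$ set is a uniform countable union of $\lightfacePi^0_1$ sets, is the same normal form the paper uses (the paper packages it as $F=\proj_{X\times Y}D$ with $D\in\lightfacePi^0_1(\mathbb{N}\times X\times Y)$, which is an equivalent way of saying the same thing), and the remaining bookkeeping with $Y=\bigcup_k Y_k$ matches the paper's treatment of the sets $D_n$. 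One small remark: you do not actually need to intersect $F_n$ with $X\times Y_k$ before projecting; applying $\forall_{Y_k}$ directly to $U_x^n=\{y\st (x,y)\notin F_n\}$ already semi-decides $x\notin\proj_X(F_n\cap(X\times Y_k))$, which shaves one step from the argument.
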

\begin{proof}
	Let us first assume that $Y$ is co-c.e.\ compact and let $F\in \lightfacePi^0_1(X\times Y)$. Let $p\in\Baire$ be a computable map s.t.\ $\setcomplement{F}=\bigcup_{n\in\mathbb{N}} B^X_{p(n)_0}\times B^Y_{p(n)_1}$. Notice that $B^X_n\subset \setcomplement{(\proj_X F)}$ iff the preimage $B^X_n \times Y$ of $B^X_n$ via the projection map $\proj_X$ is contained in the complement of $F$.
	
	Let $\varphi_p\function{\baire\times \mathbb{N}}{\mathbb{N}}$ be a computable function s.t., for all $\sigma\in \baire$
	\[ \bigcap_{i\in\ran(\sigma)} B^X_{p(i)_0} = \bigcup_{k\in\mathbb{N}} B^X_{\varphi_p(\sigma,k)}~. \]
	Such a map exists because $X$ is a computable metric space (and hence effective second-countable).
	
	To show that $\setcomplement{(\proj_X F)}$ is effectively open, notice that
	\[ \left\{ n \in \mathbb{N} \st (\exists \sigma \in \baire)\left(
\bigcup_{i\in\ran(\sigma)} B^Y_{p(i)_1}= Y \text{ and } (\exists
k)(n=\varphi_p(\sigma,k)) \right) \right\} \in \lightfaceSigma^0_1. \] This
follows from the fact that $\varphi_p$ is computable and that
$\bigcup_{i\in\ran(\sigma)} B^Y_{p(i)_1}= Y $ is $\lightfaceSigma^0_1$
because $Y$ is co-c.e.\ compact. This shows that we can computably enumerate
a list of open sets exhausting the complement of $\proj_X F$, i.e.\ $\proj_X
F \in \lightfacePi^0_1(X)$.
	
The same argument shows (still assuming $Y$ co-c.e.\ compact) that if $D\in \lightfacePi^0_1(\mathbb{N}\times X \times Y)$ then $\proj_{\mathbb{N}\times X} D \in \lightfacePi^0_1(\mathbb{N}\times X)$ (it is enough to replace $X$ with $\mathbb{N}\times X$). If $F\in \lightfaceSigma^0_2(X\times Y)$ then it can be written as $F= \proj_{X\times Y} D$, for some $D\in \lightfacePi^0_1(\mathbb{N}\times X \times Y)$. In particular,
	\[ \proj_X F = \proj_X \proj_{\mathbb{N}\times X} D~,  \]
	and therefore $\proj_X F \in \lightfaceSigma^0_2(X)$.
	
	Finally, assume that $F\in \lightfaceSigma^0_2(X\times Y)$ and $Y=\bigcup_{n\in\mathbb{N}} Y_n$ where the $\sequence{Y_n}{n\in\mathbb{N}}$ are uniformly co-c.e.\ compact. Let $D\in\lightfacePi^0_1(\mathbb{N}\times X \times Y)$ be s.t.\ $F=\proj_{X\times Y} D$ (as above).
	Notice that, defining $D_n:=\{ (k,x,y) \in D \st y \in Y_n \}$, the sequence $\sequence{\proj_{\mathbb{N}\times X} D_n}{n\in\mathbb{N}}$ is uniformly $\lightfacePi^0_1$, as $\sequence{Y_n}{n\in\mathbb{N}}$ is uniformly co-c.e.\ compact. In other words,
	\[ E:= \{ (n,k,x) \st (\exists y)((k,x,y)\in D_n )\}\in \lightfacePi^0_1(\mathbb{N}\times \mathbb{N}\times X), \]
	and therefore $\proj_X F = \proj_X E\in \lightfaceSigma^0_2(X)$ (using a canonical computable identification $\mathbb{N}^2\to \mathbb{N}$).
\end{proof}

We cannot extend \thref{thm:effective_AM} to effective spaces because in that
context $\lightfaceSigma^0_2$ sets are not necessarily computable unions of
$\lightfacePi^0_1$ sets. However the above proof works if we deal only with
subsets of the product space that are computable unions of $\lightfacePi^0_1$
sets. We thus obtain the following Corollary.

\begin{corollary}
\thlabel{thm:effective_AM_eff_spaces} Let $(X,
\sequence{B^X_n}{n\in\mathbb{N}}),(Y,\sequence{B^Y_n}{n\in\mathbb{N}})$ be
effective second-countable spaces. If $Y$ is co-c.e.\ compact then, for every
$F\in \lightfacePi^0_1(X\times Y)$, $\proj_X F \in \lightfacePi^0_1(X)$. If
$Y=\bigcup_{n\in\mathbb{N}} Y_n$, where the sequence
$\sequence{Y_n}{n\in\mathbb{N}}$ is uniformly co-c.e.\ compact, and
$F=\proj_{X\times Y} D$ for some $D \in \lightfacePi^0_1(\mathbb{N}\times
X\times Y)$, then $\proj_X F \in \lightfaceSigma^0_2(X)$.
\end{corollary}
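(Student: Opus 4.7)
The plan is to essentially rerun the proof of \thref{thm:effective_AM}, observing at each step that the only place where we used the metric structure of $X$ and $Y$ was via the existence of the computable witness function $\varphi_p$ for effective second-countability, and via the possibility of writing $\lightfaceSigma^0_2$ sets as computable unions of $\lightfacePi^0_1$ sets. Since effective second-countability is assumed outright here, the first of these is free, and the second issue is precisely what is side-stepped by the hypotheses of the corollary.

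For the first claim, I would take $F\in\lightfacePi^0_1(X\times Y)$ and a computable $p\in\Baire$ such that $\setcomplement{F}=\bigcup_{n} B^X_{p(n)_0}\times B^Y_{p(n)_1}$. By effective second-countability of $X$ there is a computable $\varphi_p\function{\baire\times\mathbb{N}}{\mathbb{N}}$ witnessing $\bigcap_{i\in\ran(\sigma)} B^X_{p(i)_0}=\bigcup_{k} B^X_{\varphi_p(\sigma,k)}$. As in \thref{thm:effective_AM}, a basic open set $B^X_n$ lies in $\setcomplement{(\proj_X F)}$ as soon as the preimage $B^X_n\times Y$ is contained in $\setcomplement{F}$; therefore the index set
\[ \left\{ n\in\mathbb{N} \st (\exists \sigma\in\baire)\left( \bigcup_{i\in\ran(\sigma)} B^Y_{p(i)_1}=Y \text{ and } (\exists k)(n=\varphi_p(\sigma,k)) \right) \right\} \]
is $\lightfaceSigma^0_1$, since $\varphi_p$ is computable and co-c.e.\ compactness of $Y$ makes the covering condition $\lightfaceSigma^0_1$. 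Enumerating this set yields a computable list of basic open sets whose union is $\setcomplement{(\proj_X F)}$, so $\proj_X F\in\lightfacePi^0_1(X)$.

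For the second claim, the same argument with $\mathbb{N}\times X$ in place of $X$ shows that whenever $D\in\lightfacePi^0_1(\mathbb{N}\times X\times Y)$ and $Y$ is co-c.e.\ compact we get $\proj_{\mathbb{N}\times X} D\in\lightfacePi^0_1(\mathbb{N}\times X)$. Now, under the hypothesis of the corollary, $F=\proj_{X\times Y} D$ for some $D\in\lightfacePi^0_1(\mathbb{N}\times X\times Y)$, and $Y=\bigcup_{n} Y_n$ with the $Y_n$ uniformly co-c.e.\ compact. Setting $D_n:=\{(k,x,y)\in D \st y\in Y_n\}$ we obtain a uniformly $\lightfacePi^0_1$ sequence $\sequence{\proj_{\mathbb{N}\times X} D_n}{n}$, so
\[ E:=\{(n,k,x) \st (\exists y)((k,x,y)\in D_n)\} \in \lightfacePi^0_1(\mathbb{N}\times\mathbb{N}\times X), \]
and since $\proj_X F=\proj_X E$, we conclude $\proj_X F\in\lightfaceSigma^0_2(X)$ using a canonical computable identification $\mathbb{N}^2\to\mathbb{N}$.

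The only potential obstacle is making sure the $\varphi_p$ step really does transfer from the metric case to the effective second-countable case; but this is exactly the definition of an effective topological space given in Section~\ref{sec:rep_hyperspaces}, so there is nothing new to verify. The real content of the corollary, as the authors flag in the remark preceding it, is that the second claim must be stated with $F$ already presented as the projection of a $\lightfacePi^0_1$ set, since in general effective $\lightfaceSigma^0_2$ sets in an effective space need not be computable unions of $\lightfacePi^0_1$ sets and hence need not arise as such projections.
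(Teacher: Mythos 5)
Your proof is correct and is exactly the argument the paper leaves implicit: the corollary is stated without a separate proof, accompanied only by the remark that the proof of \thref{thm:effective_AM} goes through once $F$ is already presented as a projection of a $\lightfacePi^0_1$ set. You rerun precisely those steps and correctly identify the $\lightfaceSigma^0_2$-decomposition issue as the sole obstruction to a verbatim transfer, so this is the same approach.
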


\section{Computable measure theory}
\label{sec:comp_measure_theory}

If $X$ is a separable metrizable space, we consider the space $\ProbabilityMeas(X)$ of Borel probability measures on $X$, endowed with the weak topology generated by the maps $\mu \mapsto \int f\dd\mu$, with $f\in\cont[][b]{X}$ (i.e.\ $f\colon X \to \mathbb{R}$ is continuous and bounded, see \cite[Sec.\ 17.E]{KechrisCDST}). A basis for the topology on $\ProbabilityMeas(X)$ is the family of sets of the form
\[ U_{\mu,\varepsilon,f_0,\hdots,f_n} := \left\{ \nu\in\ProbabilityMeas(X) \st (\forall i\le n)\left( \left| \int_X f_i \dd{\nu} - \int_X f_i \dd{\mu} \right| < \varepsilon\right) \right\}, \]
where $\mu\in\ProbabilityMeas(X)$, $\varepsilon>0$, and $f_i\in\cont[][b]{X}$ for every $i$. The space $\ProbabilityMeas(X)$ is separable metrizable iff so is $X$ \cite[Ch.\ II, Thm.\ 6.2]{Parthasarathy67}. Moreover if $X$ is compact metrizable (resp.\ Polish) then so is $\ProbabilityMeas(X)$ (\cite[Thm.\ 17.22 and Thm.\ 17.23]{KechrisCDST}).

We now give a brief introduction on how computable measure theory can be developed in the context of TTE. For a more thorough presentation we refer the reader to \cite{CollinsCSP2014}. The theory can be developed in the more general context of Borel measures on sequential topological spaces \cite{Schroder2007}. In particular, since every represented space can be endowed with the final topology (which is sequential), the theory can be developed for every represented space $X$. For our purposes it is enough to focus on probability measures on $X$, where $X$ is (computably homeomorphic to) either $[0,1]^d$ or $\mathbb{R}^d$.

As mentioned, in this case $\ProbabilityMeas(X)$ is a Polish space. A canonical choice for a dense subset of $\ProbabilityMeas(X)$ is the set $\mathcal{D}$ of probability measures concentrated on finitely many points of the dense subset of $X$, assigning rational mass to each of them (i.e.\ a weighted sum of Dirac deltas, where each weight is rational). Moreover, the Prokhorov metric on $\ProbabilityMeas(X)$ can be explicitly defined as
\[ \rho(\mu,\nu) := \inf \{ \varepsilon>0 \st (\forall A \in \Borel(X))(\mu(A) \le \nu(A^\varepsilon)+\varepsilon \text{ and } \nu(A) \le \mu(A^\varepsilon)+\varepsilon)\}, \]
with $A^\varepsilon:=\{ x \in X \st d(x,A)<\varepsilon \}$. This metric induces the weak topology on $\ProbabilityMeas(X)$. The space $(\ProbabilityMeas(X), \rho, \mathcal{D})$ is a computable metric space (see \cite[Prop.\ 4.1.1]{HoyrupRojas2007}), and therefore it is represented using the Cauchy representation.

From a computational point of view, it is often convenient to look at Borel
(probability) measures from a different point of view. A
\textdef{(probability) valuation} is a map
$\nu\function{\boldfaceSigma^0_1(X)}{[0,1]}$ s.t.\
\begin{itemize}
	\item $\nu(\emptyset) = 0$;
	\item $\nu(X) = 1$;
	\item $\nu(U)+\nu(V)= \nu(U \cup V) - \nu(U\cap V)$.
\end{itemize}
Probability valuations can be defined in a slightly more general context as maps over a lattice (\cite[Sec.\ 2.2]{Schroder2007}). Every Borel measure $\mu$ naturally induces a valuation $\nu:=\mu\restrict{\boldfaceSigma^0_1(X)}$. The induced valuation is lower semicontinuous, i.e.\ if $\sequence{A_i}{i\in\mathbb{N}}$ are nested open sets then $\nu(\bigcup_i A_i) = \sup_i \nu(A_i)$. Since every finite Borel measure is uniquely identified by its restriction to the open sets (as every such measure on the Euclidean space is regular, and in particular outer regular, see e.g.\ \cite[Thm.\ 2.18]{RudinRCA}), we can identify $\ProbabilityMeas(X)$ with the family of lower semicontinuous valuations on $\boldfaceSigma^0_1(X)$.

The lower semicontinuity of the valuations can be naturally translated in the context of TTE as follows. We use the represented space $(\leftReal,\repmap{\leftReal})$ of real numbers, where $x\in \mathbb{R}$ is represented by a monotonically increasing sequence of rational numbers converging to $x$. Equivalently, we can think of a $\repmap{\leftReal}$-name for $x$ as the list of all rational numbers smaller than $x$. This is the so-called \textdef{left-cut representation} of the real numbers, and we say that a real is \textdef{left-c.e.\ }if it has a computable $\repmap{\leftReal}$-name (see \cite[Sec.\ 4.1]{Weihrauch00}). The final topology induced on $\mathbb{R}$ by $\repmap{\leftReal}$ is exactly the topology of lower semicontinuity (i.e.\ the topology whose open sets are of the form $(x,\infty)$ for some $x\in\mathbb{R}$, see \cite[Lem.\ 4.1.4]{Weihrauch00}). Similarly, we obtain the represented space $(\rightReal, \repmap{\rightReal})$ of the \textdef{right-c.e.\ }reals, where $\repmap{\rightReal}$ is the \textdef{right-cut representation} map, naming a real as a monotonically decreasing sequence of rationals converging to it. The final topology of $\repmap{\rightReal}$ is the topology of the upper-semicontinuity (again, see \cite[Lem.\ 4.1.4]{Weihrauch00}). It is straightforward to see that given a $\repmap{\leftReal}$-name for $x$ we can computably find a $\repmap{\rightReal}$-name for $-x$ (and vice versa). Notice that $+\function{\leftReal\times \leftReal}{\leftReal}$ and $\sup\function{\leftReal^\mathbb{N}}{\leftReal}$ are computable, but $-\function{\leftReal\times \leftReal}{\leftReal}$ is not. 

With this in mind, we can define another representation on the space of Borel (probability) measures: the \textdef{canonical representation} $\repmap{C}$ for a (probability) measure names a measure $\mu$ using a name for the (realizer-)continuous function $\mu\restrict{\boldfaceSigma^0_1(X)}\function{\boldfaceSigma^0_1(X)}{[0,1]_<}$ (\cite[Sec.\ 3.1]{Schroder2007}). The final topology on $\ProbabilityMeas(X)$ induced by $\repmap{C}$ coincides with the weak topology on $\ProbabilityMeas(X)$ (\cite[Cor.\ 3.5]{Schroder2007}). Moreover, the canonical representation is equivalent to the Cauchy representation on $\ProbabilityMeas(X)$ (\cite[Prop.\ 3.7]{Schroder2007}).
In the development of the theory it is often more convenient to think of a (probability) measure as being represented using the canonical representation, i.e.\ using a name for the induced valuation. We can therefore think of a name for a (probability) measure $\mu$ on $X$ as a list of $\repmap{<}$-names for the measures of the basic open balls.

\begin{theorem}
	\thlabel{thm:computability_measures}
	Let $(X,\repmap{X})$ and $(Y,\repmap{Y})$ be represented spaces, endowed with the final topology induced by their respective representation maps. Let also $\cont{X,Y}$ be the set of continuous functions $\function*{X}{Y}$. The following maps are computable:
	
	\begin{enumerate}
		\item $\function*{\ProbabilityMeas(X)\times \boldfaceSigma^0_1(X)}{\leftReal}:=(\mu, U)\mapsto \mu(U)$;
		\item $\function*{\ProbabilityMeas(X)\times \hypClosedUF(X)}{\rightReal}:=(\mu, F)\mapsto \mu(F);$
		\item $\function*{\ProbabilityMeas(X)\times \boldfaceDelta^0_1(X)}{\mathbb{R}}:=(\mu, D)\mapsto \mu(D);$
		\item $\function*{\ProbabilityMeas(X)\times \cont{X,Y}}{\ProbabilityMeas(Y)}:=(\mu, f)\mapsto \mu_f$, where $\mu_f(E):= \mu(f^{-1}(E))$ is the push-forward measure;
		\item $\int\function{\cont{X,\leftReal}\times \ProbabilityMeas(X)}{\leftReal}:= (f,\mu)\mapsto \int f \dd{\mu}$;
		\item $\int\function{\cont[][ebd]{X,\mathbb{R}}\times \ProbabilityMeas(X)}{\mathbb{R}}:= (f,\mu)\mapsto \int f \dd{\mu}$, where $\cont[][ebd]{X,\mathbb{R}}$ denotes the space of \textdef{effectively bounded continuous functions}, i.e.\ $f\in\cont{X,\mathbb{R}}$ and there are two computable reals $a,b$ s.t.\ for every $x\in X$, $a<f(x)<b$.
	\end{enumerate}
\end{theorem}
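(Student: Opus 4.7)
The approach hinges on the description of $\ProbabilityMeas(X)$ via the canonical representation $\repmap{C}$: a $\repmap{C}$-name of $\mu$ is precisely a realizer of the lower-semicontinuous valuation $\mu\restrict{\boldfaceSigma^0_1(X)}\function{\boldfaceSigma^0_1(X)}{[0,1]_<}$. Items (1)--(3) are then almost direct consequences of this description, while (4)--(6) reduce to them; the main work lies in the layer-cake argument for (5).

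For (1), given $\repmap{C}$-names of $\mu$ and $\repmap{\boldfaceSigma^0_1}$-names of $U$, we simply evaluate the continuous map $U\mapsto \mu(U)$ to obtain a $\repmap{\leftReal}$-name of $\mu(U)$. For (2), a $\closedNegRep$-name of $F$ is by definition a $\repmap{\boldfaceSigma^0_1}$-name of $\setcomplement{F}$, so (1) gives $\mu(\setcomplement{F})$ as a left-c.e.\ real; since $\mu(F) = 1 - \mu(\setcomplement{F})$, this is right-c.e. For (3), a $\repmap{\boldfaceDelta^0_1}$-name of $D$ simultaneously supplies $\repmap{\boldfaceSigma^0_1}$-names of $D$ and of $\setcomplement{D}$, so (1) and (2) give $\mu(D)$ both as left- and as right-c.e., hence as an ordinary real.

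For (4), the key observation is that $V \mapsto f^{-1}(V)$ is computable $\function*{\boldfaceSigma^0_1(Y)}{\boldfaceSigma^0_1(X)}$, uniformly in a name of the continuous $f\function{X}{Y}$. This is standard: a $\repmap{\boldfaceSigma^0_1}$-name of $V$ describes it as the set of points at which $\charfun{V}\function{Y}{\Sier}$ takes value $1$, and $f^{-1}(V)$ is analogously described by $\charfun{V}\circ f$, which is computable by composition. Feeding $f^{-1}(V)$ into (1) yields $\mu_f(V) = \mu(f^{-1}(V))$ as a left-c.e.\ real, i.e.\ a $\repmap{C}$-name of $\mu_f\in\ProbabilityMeas(Y)$; the lower semicontinuity and $\sigma$-additivity of $\mu_f$ are inherited from those of $\mu$, so this is a bona fide representation of a Borel probability measure.

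For (5), assume first that $f\ge 0$ (the general case reduces by shifting by a computable lower bound, when one is available). Use the layer-cake approximation $f_n := 2^{-n}\sum_{k=1}^{n\cdot 2^n} \charfun{\{f > k\cdot 2^{-n}\}}$. Each set $\{f > k\cdot 2^{-n}\}$ is open by lower semicontinuity of $f$ and computable from a name of $f$, so by (1) its $\mu$-measure is uniformly left-c.e.; hence $\int f_n\dd\mu = 2^{-n}\sum_k \mu(\{f > k\cdot 2^{-n}\})$ is uniformly left-c.e.\ in $n$, and monotone convergence gives $\int f\dd\mu = \sup_n \int f_n\dd\mu$ as a left-c.e.\ real. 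For (6), since $a<f(x)<b$ on $X$ with computable $a,b$, both $f-a$ and $b-f$ are non-negative continuous maps into $\leftReal$, so (5) provides $\int(f-a)\dd\mu$ and $\int(b-f)\dd\mu$ as left-c.e.\ reals. Thus $\int f\dd\mu$ is simultaneously left-c.e.\ (as $a + \int(f-a)\dd\mu$) and right-c.e.\ (as $b - \int(b-f)\dd\mu$), so it is a computable real. The hard part here is the uniform effectiveness of the layer cake in (5): one must verify that the open sets $\{f>k\cdot 2^{-n}\}$ and their measures are produced uniformly in $k,n$ from a name of $f$, which is exactly what the representation of $\cont{X,\leftReal}$ affords.
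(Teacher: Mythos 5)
Your proof is correct and follows essentially the same approach as the paper: items (1)--(3) are exactly the paper's observations (evaluate the valuation; complement; pair), and for (4)--(6) the paper simply cites the literature (Collins and Schr\"oder), where the very diagram-chasing and layer-cake arguments you spell out are carried out. One point worth flagging, which you yourself hedge on: for (5), your reduction to nonnegative $f$ by subtracting a lower bound requires such a bound to be available, whereas the statement quantifies over all of $\cont{X,\leftReal}$; the cited sources work in the setting of lsc functions into $[0,\infty]$, so the paper's formulation shares the same imprecision and your caveat is appropriate rather than a genuine gap.
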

\begin{proof}
	Point $(1)$ is straightforward from the definition of the canonical representation for $\ProbabilityMeas(X)$ (see also \cite[Prop.\ 4.2.1]{HoyrupRojas2007}) and point $(2)$ is a corollary of point $(1)$ (as $\mu(F) = 1 - \mu(\setcomplement[X]{F})$). Point $(3)$ follows trivially from the points $(1)$ and $(2)$ as a $\repmap{\mathbb{R}}$-name for $x\in \mathbb{R}$ can be computably obtained from a $\repmap{\leftReal}$-name and a $\repmap{\rightReal}$-name of $x$. Point $(4)$ is (essentially) a diagram-chasing exercise, see also \cite[Prop.\ 49]{CollinsCSP2014}. Points $(5)$ and $(6)$ are presented in \cite[Sec.\ 3, in particular point (5) is Prop.\ 7]{CollinsCSP2014}. See also \cite[Prop.\ 3.6]{Schroder2007} for a slightly more general version of point $(5)$.
\end{proof}

For our purposes, we will also need an effective analogue of the fact that if $X$ is compact metrizable then so is $\ProbabilityMeas(X)$ (\cite[Thm.\ 17.22]{KechrisCDST}).  The proof of the following theorem was suggested by Matthias Schr\"oder.

\begin{theorem}
	\thlabel{thm:P(X)_comp_compact}
	For every computable metric space $(X,d,\alpha)$, if $X$ is computably compact then so is $\ProbabilityMeas(X)$.
\end{theorem}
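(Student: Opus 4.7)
My plan is to verify the two conditions in \thref{def:computably_compact}: existence of a computable dense sequence and co-c.e.\ compactness. The dense sequence is supplied by the set $\mathcal{D}$ of rational-weight atomic measures supported on $\ran(\alpha)$, which is computable by its very definition. So the real work is showing co-c.e.\ compactness.

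My approach will be to exploit effective total boundedness together with the classical (topological) compactness of $\ProbabilityMeas(X)$, which holds when $X$ is compact by \cite[Thm.\ 17.22]{KechrisCDST}. First, using the computable compactness of $X$, I would extract uniformly in $n\in\mathbb{N}$ a finite $2^{-n}$-net $\{\alpha(i^n_0),\ldots,\alpha(i^n_{k_n})\}$ in $X$, since co-c.e.\ compactness allows one to effectively compute a finite cover by basic balls of radius $2^{-n}$ and then select one centre from each. From this I would build a finite subset $\mathcal{N}_n \subset \mathcal{D}$ consisting of the atomic measures on this net whose weights are dyadic rationals of the form $j/2^n$ summing to one. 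A mass-transport argument should show that $\mathcal{N}_n$ is a $c\cdot 2^{-n}$-net of $\ProbabilityMeas(X)$ in the Prokhorov metric, for some explicit constant $c$: starting from an arbitrary $\mu$, one assigns each point of $X$ to the nearest net centre (yielding a measurable partition of diameter $2\cdot 2^{-n}$), rounds the masses of the partition pieces to the grid $\{j/2^n\}$, and moves each chunk of mass to its representative net point.

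Second, to certify that a finite collection of basic Prokhorov-balls $\{B(\nu_i,r_i)\}_{i<k}$ covers $\ProbabilityMeas(X)$, I would enumerate $n\in\mathbb{N}$ and check the fully computable condition that every $\nu\in\mathcal{N}_n$ lies in some \emph{shrunk} ball $B(\nu_i, r_i - c\cdot 2^{-n})$. If this test passes for some $n$, the triangle inequality applied to the $c\cdot 2^{-n}$-net property yields that every $\mu\in\ProbabilityMeas(X)$ lies in some $B(\nu_i,r_i)$, certifying the cover. Conversely, if the original balls do cover $\ProbabilityMeas(X)$, the topological compactness of $\ProbabilityMeas(X)$ gives a positive Lebesgue number $\delta>0$ for this cover, and the test automatically succeeds for any $n$ with $c\cdot 2^{-n}<\delta$. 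This exhibits the c.e.\ procedure witnessing co-c.e.\ compactness and, combined with the dense sequence $\mathcal{D}$, the computable compactness of $\ProbabilityMeas(X)$.

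The main technical obstacle will be the quantitative Prokhorov estimate underlying the net construction: verifying that mass-rounding on a fine partition produces an atomic measure within Prokhorov distance $O(2^{-n})$ of $\mu$. A direct transport-of-mass argument suffices (one can also appeal to a Strassen-type reformulation of the Prokhorov distance), but the constants must be tracked carefully so that the shrinking factor $c\cdot 2^{-n}$ in the second step matches the net accuracy in the first.
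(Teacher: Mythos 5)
Your approach is genuinely different from the paper's. The paper proves the result structurally: since $X$ is computably compact it admits a total representation $\repmap{}\function{\Cantor}{X}$ (equivalent to the Cauchy representation), one identifies $\ProbabilityMeas(\Cantor)$ with the $\lightfacePi^0_1$ subset of $\infStrings{[0,1]}$ of branching-mass functions $\pi_\mu(\sigma)=\mu(\sigma\concat\Cantor)$, invokes \thref{thm:prod_comp_compact} for the computable compactness of $\infStrings{[0,1]}$, and then pushes forward via the computable surjection $\psi(\mu)=\mu_{\repmap{}}$ (surjectivity by Schr\"oder--Simpson; computability by \thref{thm:computability_measures}(4)). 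You instead build explicit finite $\varepsilon$-nets in the Prokhorov metric and certify covers via a Lebesgue-number argument, using the classical compactness of $\ProbabilityMeas(X)$ only to guarantee the eventual success of the semi-decision procedure. Your route is more hands-on, avoids the Schr\"oder--Simpson surjectivity theorem, and makes the effective total boundedness of $\ProbabilityMeas(X)$ explicit, but at the cost of quantitative bookkeeping that the paper's proof sidesteps entirely.

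That bookkeeping is exactly where there is a gap. Rounding the weights of the atomic measure to the grid $\{j/2^n\}_j$ does \emph{not} produce a $c\cdot 2^{-n}$-net in the Prokhorov metric for any constant $c$ independent of $n$: if the $2^{-n}$-net in $X$ has $k_n$ centres, the $\ell^1$ (and hence Prokhorov) error from rounding to a $2^{-n}$-spaced grid can be as large as $\Theta(k_n\cdot 2^{-n})$, and $k_n$ typically grows exponentially with $n$. With this choice the "constant'' $c$ drifts with $n$, and the shrunk-ball test $\nu\in B(\nu_i, r_i - c\cdot 2^{-n})$ no longer matches the net accuracy, which is the very point you flag as delicate. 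The repair is straightforward: once the finite cover of $X$ is found (which is how $k_n$ becomes known), round to a grid of spacing at most $2^{-n}/k_n$, so the rounding contributes $\le 2^{-n}$ and the transport to centres contributes $\le 2^{-n}$, giving a genuine $2\cdot 2^{-n}$-net; the remainder of your certification argument (triangle inequality in one direction, positive Lebesgue number from classical compactness of $\ProbabilityMeas(X)$ in the other) then goes through and yields co-c.e.\ compactness. Together with the computable dense sequence $\mathcal{D}$, this gives computable compactness of $\ProbabilityMeas(X)$.
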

\begin{proof}
	Since $X$ is a computably compact computable metric space, there is a total representation map $\repmap{}\function{\Cantor}{X}$ for $X$ which is (computably) equivalent to the Cauchy representation on $X$ (\cite[Prop.\ 4.1]{BdBPLow12}).
	
	Every probability measure $\mu\in \ProbabilityMeas(\Cantor)$ can be identified with a function $\pi_\mu\in [0,1]^\mathbb{N}$ (identifying $\mathbb{N}$ with $\cantor$) s.t.
	\begin{equation*}\tag{$\star$}
		\pi_\mu(\str{})=1 \land (\forall \sigma \in \cantor)\left( \pi_\mu(\sigma) = \pi_\mu(\sigma\concat \str{0}) + \pi_\mu(\sigma\concat \str{1})\right),
	\end{equation*}	
	so that $\pi_\mu(\sigma)=\mu(\sigma \concat \Cantor)$.
	
	The map $\Phi:=\mu\mapsto \pi_\mu$ is a computable homeomorphism (i.e.\ a computable bijection with computable inverse) between $\ProbabilityMeas(\Cantor)$ and a $\lightfacePi^0_1$ subset of $\infStrings{[0,1]}$. Indeed, it is computable by \thref{thm:computability_measures}(3) and its inverse $\Phi^{-1}$ is straightforwardly computable. Moreover, $\pi \in \ran(\Phi)$ iff it satisfies $(\star)$, which is a $\lightfacePi^0_1$ condition relative to $\pi$. This, in turn, implies that $\ran(\Phi)$ is co-c.e.\ compact by \thref{thm:prod_comp_compact} and the remarks preceding it, as $\infStrings{[0,1]}$ is computably compact. In particular, the fact that $\ProbabilityMeas(\Cantor)$ is computably homeomorphic to a co-c.e.\ compact space implies it is co-c.e.\ compact.
	
	To conclude the proof, define $\psi\function{\ProbabilityMeas(\Cantor)}{\ProbabilityMeas(X)}$ as the push-forward operator $\psi(\mu):=\mu_{\repmap{}}$ where $\mu_{\repmap{}}(E):=\mu(\repmap{}^{-1}(E))$. This map is computable (\thref{thm:computability_measures}(4)) and surjective (\cite[Thm.\ 14]{SchroderSimpson2006}), and therefore $\ProbabilityMeas(X)$ is co-c.e.\ compact.
\end{proof}

\begin{corollary}
	\thlabel{thm:P(X)_comp_compact_k}
	For every computable metric space $X$ that admits an admissible representation $\repmap{}\pfunction{\Baire}{X}$ with co-c.e.\ compact domain, the space $\ProbabilityMeas(X)$ is co-c.e.\ compact.
\end{corollary}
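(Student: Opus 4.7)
The plan is to reduce the claim to \thref{thm:P(X)_comp_compact} by showing that the stated hypotheses force $X$ itself to be computably compact, at which point the preceding theorem applies verbatim. The whole proof then consists of two short verifications: that $X$ is co-c.e.\ compact, and that it carries a computable dense sequence.

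First I would argue $X$ is co-c.e.\ compact. Being an admissible representation of the computable metric space $X$, the map $\delta$ is computably equivalent to the Cauchy representation $\repmap{\mathsf{C}}$ on $X$: for a computable metric space $\repmap{\mathsf{C}}$ is itself computably admissible (that is, computably $\le$-maximal among representations compatible with the metric topology), so any other admissible representation is computably equivalent to it. In particular $\delta$ is a computable surjection from $\dom(\delta)$ onto $X$. Since $\dom(\delta)$ is co-c.e.\ compact by hypothesis, the preservation property recalled immediately after \thref{def:computably_compact} (computable surjective images of co-c.e.\ compact spaces are again co-c.e.\ compact) yields that $X$ is co-c.e.\ compact.

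Second, $X$ is by assumption a computable metric space, hence it comes equipped with a computable dense sequence $\alpha$. Combined with the previous paragraph, this is exactly what it means for $X$ to be computably compact in the sense of \thref{def:computably_compact}. An application of \thref{thm:P(X)_comp_compact} then gives that $\ProbabilityMeas(X)$ is co-c.e.\ compact, concluding the proof. The only slightly delicate point in this plan is the implicit passage from topological admissibility of $\delta$ to its computable equivalence with $\repmap{\mathsf{C}}$, but this is the standard TTE fact about computable admissibility of the Cauchy representation on computable metric spaces; no substantial obstacle remains, and the argument is essentially a one-line corollary of the preceding theorem.
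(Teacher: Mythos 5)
Your plan coincides with the paper's own (one-line) proof: deduce from the hypothesis that $X$ is computably compact, then invoke \thref{thm:P(X)_comp_compact}. The only content is the first step, and it is exactly here that your justification has a gap.

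You assert that because $\delta$ is admissible and $X$ is a computable metric space, $\delta$ must be \emph{computably} equivalent to the Cauchy representation $\repmap{\mathsf{C}}$, calling this a ``standard TTE fact.'' Under the paper's own \thref{def:admissibility}, however, admissibility is a purely \emph{topological} maximality condition (maximality with respect to $\le_t$, i.e.\ reduction by a continuous functional). It does not yield a computable reduction. A concrete counterexample: take $X=\Cantor$ with the identity representation $\delta_C=\mathrm{id}$, fix a non-computable $A\in\Cantor$, and set $\delta'(p):=p\oplus A$ (bitwise XOR). Then $\delta'$ is a homeomorphism of $\Cantor$, hence topologically admissible, and $\dom(\delta')=\Cantor$ is co-c.e.\ compact; but $\delta'\le\delta_C$ would make $A=\delta'(0^\omega)$ computable, so $\delta'\not\le\delta_C$. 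Thus the inference ``admissible $\Rightarrow$ computably equivalent to $\repmap{\mathsf{C}}$'' is simply false, and the intermediate claim on which your whole argument rests does not hold with the paper's literal definition of admissibility. (In this particular example the \emph{conclusion} $X$ co-c.e.\ compact happens to survive, but your route to it does not.)

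What saves the corollary — and what the paper is implicitly using — is a stronger reading of the hypothesis: the representation $\delta$ should be \emph{computably} admissible, i.e.\ computably equivalent to $\repmap{\mathsf{C}}$. That is how the hypothesis is instantiated in the one place the corollary is applied, \thref{thm:P(K)_comp_compact}, where $\delta_{d,n}\restrict{\delta_{d,n}^{-1}(K)}$ is an explicitly computable restriction of the signed-digit representation. With that reading, your argument goes through verbatim: $\delta$ is then a computable surjection from the co-c.e.\ compact set $\dom(\delta)$ onto $X$, so $X$ is co-c.e.\ compact by the remark after \thref{def:computably_compact}; together with the computable dense sequence coming from the computable metric structure, $X$ is computably compact, and \thref{thm:P(X)_comp_compact} applies. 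You correctly identified the delicate point; the fix is not to appeal to a non-existent ``standard fact,'' but to make explicit that the corollary's hypothesis must be computable admissibility (or, equivalently, to observe that the only intended use satisfies it).
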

\begin{proof}
	Trivial from \thref{thm:P(X)_comp_compact} since if $X$ admits an admissible representation $\repmap{}\pfunction{\Baire}{X}$ with co-c.e.\ compact domain then it is computably compact.
\end{proof}

\begin{corollary}
	\thlabel{thm:P(K)_comp_compact}
	For every $d$ and every co-c.e.\ compact $K\subset\mathbb{R}^d$, the space $\ProbabilityMeas(K)$ is co-c.e.\ compact, and hence computably compact.
\end{corollary}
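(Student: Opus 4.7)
The plan is to realize $\ProbabilityMeas(K)$ as a $\lightfacePi^0_1$ subspace of $\ProbabilityMeas(C)$ for a computably compact cube $C\supset K$, and then apply \thref{thm:P(X)_comp_compact} together with the closure of co-c.e.\ compactness under $\lightfacePi^0_1$ subsets.

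First I would effectively bound $K$. Since $K$ is co-c.e.\ compact, we can enumerate finite covers of $K$ by rational open balls, and any such cover has union contained in some rational hypercube, from which we read off an integer $N$ with $K\subseteq C:=[-N,N]^d$. The cube $C$ is a computably compact computable metric space, so \thref{thm:P(X)_comp_compact} yields that $\ProbabilityMeas(C)$ is computably compact.

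Next, I would view $\ProbabilityMeas(K)$ inside $\ProbabilityMeas(C)$ via push-forward along the inclusion $\iota\function{K}{C}$, which is computable by \thref{thm:computability_measures}(4). Its image is exactly
\[ \iota_*\bigl(\ProbabilityMeas(K)\bigr) = \bigl\{\mu\in \ProbabilityMeas(C) \st \mu(C\setdifference K)=0\bigr\}. \]
Since $K$ is co-c.e.\ compact, in particular $\lightfacePi^0_1$, the set $C\setdifference K$ is effectively open, so by \thref{thm:computability_measures}(1) the map $\mu\mapsto \mu(C\setdifference K)$ is computable as a map to $\leftReal$; as this value is always non-negative, the condition $\mu(C\setdifference K)=0$ is equivalent to $\mu(C\setdifference K)\le 0$ and is therefore $\lightfacePi^0_1$. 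By the remark after \thref{def:computably_compact}, a $\lightfacePi^0_1$ subset of the co-c.e.\ compact space $\ProbabilityMeas(C)$ is itself co-c.e.\ compact, so $\iota_*(\ProbabilityMeas(K))$, and hence $\ProbabilityMeas(K)$ itself, is co-c.e.\ compact.

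The final clause, "and hence computably compact", follows from the remark after \thref{def:computably_compact} that co-c.e.\ compactness and computable compactness coincide for computable metric spaces: $\ProbabilityMeas(K)$ inherits the computable Prokhorov metric from $\ProbabilityMeas(C)$, and a computable dense sequence can be produced either by exploiting the enumeration of finite covers of $K$ to build dense measures supported on $K$, or by push-forward from the computable metric space $\ProbabilityMeas(\Cantor)$ along a total representation of $C$ composed with the restriction to $\iota_*(\ProbabilityMeas(K))$. I expect the main delicate point to be precisely this last construction, since the natural dense set of rational Dirac combinations on $\mathbb{Q}^d$ need not live in $\ProbabilityMeas(K)$ when $K\cap\mathbb{Q}^d$ is empty; the cleanest workaround is to invoke \thref{thm:P(X)_comp_compact_k} after exhibiting an admissible representation of $K$ with co-c.e.\ compact domain, obtained by restricting a total representation of $C$ to the $\lightfacePi^0_1$ preimage of $K$.
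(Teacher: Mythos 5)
Your argument for co-c.e.\ compactness is correct but takes a genuinely different route from the paper's. You characterize $\ProbabilityMeas(K)$ as the $\lightfacePi^0_1$ subset $\{\mu\in\ProbabilityMeas(C)\st\mu(C\setdifference K)=0\}$ of the computably compact $\ProbabilityMeas(C)$ (using \thref{thm:computability_measures}(1), (4) and \thref{thm:P(X)_comp_compact}), then invoke the closure of co-c.e.\ compactness under $\lightfacePi^0_1$ subsets. The paper instead restricts the total signed-digit representation of the cube to $\repmap{d,n}^{-1}(K)$, observes this yields an admissible representation of $K$ with co-c.e.\ compact domain, and applies \thref{thm:P(X)_comp_compact_k} directly; it never passes through $\ProbabilityMeas(C)$ at all. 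Your route is arguably more elementary and has the virtue of reusing a characterization of $\ProbabilityMeas(A)$ (as $\{\mu\st\mu(A)\ge 1\}$) that the paper also exploits in \thref{thm:eff_hausdorff_complexity}; it does, however, implicitly rely on the fact that the pushforward $\iota_*$ is a computable bijection onto its image with computable inverse, so that co-c.e.\ compactness of $\iota_*(\ProbabilityMeas(K))$ transfers to $\ProbabilityMeas(K)$ under the valuation representation --- true, but worth a sentence. On the ``hence computably compact'' clause you correctly identify the genuine subtlety (the natural Dirac-combination dense set need not lie over $K$) and your recommended workaround --- restrict a total representation of $C$ to the $\lightfacePi^0_1$ preimage of $K$ and invoke \thref{thm:P(X)_comp_compact_k} --- is exactly what the paper does, so on that half you converge to its proof.
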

\begin{proof}
	This follows from the fact that the interval $[-1,1]$ is admissibly represented by the signed-digit representation
	\[ \repmap{S}\function{\infStrings{3}}{[-1,1]}:=p \mapsto \sum_{i\in\mathbb{N}} (p(i)-1)2^{-i-1} ~,\]
	see e.g.\ \cite[Sec.\ 7.2]{Weihrauch00}. This, in turn, implies that for each $d,n\in \mathbb{N}$ the cube $[-n,n]^d$ is admissibly represented by a total $\infStrings{3}$-representation map $\repmap{d,n}$. Clearly, for every compact $K\subset \mathbb{R}^d$ there is $n$ s.t.\ $K\subset [-n,n]^d$. The restriction of $\repmap{d,n}$ to $\repmap{d,n}^{-1}(K)$ is an admissible representation with co-c.e.\ compact domain, hence the claim follows by \thref{thm:P(X)_comp_compact_k}.
\end{proof}

\section{The effective Kaufman theorem}
\label{sec:eff_kaufman}
In the introduction, we mentioned that one of the first explicit examples of Salem sets is the set $E(\alpha)$ of $\alpha$-well approximable numbers. More formally, for every $\alpha\ge 0$, we define
\[ E(\alpha):=\left\{ x \in [0,1] \st (\exists^\infty n)\left(\min_{m\in\mathbb{Z}} | nx - m | \le n^{-1-\alpha}\right) \right\}. \]
The set $E(\alpha)$ is Salem with dimension $2/(2+\alpha)$ (if $\alpha=0$ then, by Dirichlet's theorem \cite[Ex.\ 10.8]{Falconer14}, $E(\alpha)=[0,1]$). Notice that $E(\alpha)$ is a $\boldfacePi^0_3$ subset of $[0,1]$, as it can be written in the form
\[ \bigcap_{k\in\mathbb{N}} \bigcup_{n \ge k} D_n,\]
where $D_n:=\{ x\in [0,1] \st \min_{m\in\mathbb{Z}} | nx - m | \le n^{-1-\alpha} \}$ is a finite union of non-degenerate closed intervals, and hence is closed. In fact, for every $\alpha>0$, $E(\alpha)$ is not closed (as it is dense in $[0,1]$ but $\hdim(E(\alpha))<1$).

In his original proof of the fact that $E(\alpha)$ is Salem, Kaufman \cite{Kaufman81} defines a measure supported on a (compact) subset of $E(\alpha)$ witnessing that $\fourierdim(E(\alpha))\ge p$ for every $p\le 2/(2+\alpha)$. A similar strategy is adopted in \cite[Ch.\ 9]{Wolff03}, where the author defines a closed subset $S(\alpha)$ of $E(\alpha)$ with $\fourierdim(S(\alpha))=2/(2+\alpha)$. This set can be written as
\[ S(\alpha) = \bigcap_{k\in\mathbb{N}} \bigcup_{n\in P(\alpha,k)} D_n~, \]
where $P(\alpha,k)\subset \mathbb{N}$ is finite. There is some freedom in the precise choice of $P(\alpha,k)$, but in any case the set $S(\alpha)$ is closed as the inner union is now finite.

We now prove an effective analogue of Kaufman theorem, i.e.\ we show that the
map $(\alpha,k)\mapsto P(\alpha,k)$ can be chosen to be computable. This, in
turn, implies that the map $\alpha\mapsto S(\alpha)$ is computable as well.
To this end, we follow the blueprint of the proof strategy presented in
\cite[Ch.\ 9]{Wolff03}, making the estimates explicit so to obtain their
(uniform) computability. We also change a few details to fix a minor
imprecision. The estimates we present are often not tight, and it is
certainly possible to have more precise bounds while retaining the (uniform)
computability.

We start with a couple of technical lemmas. Let us mention that the name for a smooth function $f\function{\mathbb{R}}{\mathbb{R}}$ with compact support is a $\compactMinCover$-name for $\support(f)$ and a sequence of $\cont[0][]{}(\mathbb{R},\mathbb{R})$-names for the $n$-th derivative $f^{(n)}$ of $f$.

Let us denote with $\torus$ the torus (i.e.\ the interval $[0,1]$ with endpoints identified). In particular, $1$-periodic smooth functions with support in $[0,1]$ can be identified with smooth functions on $\torus$.

\begin{lemma}
	\thlabel{thm:schwartz_decay}
	If $f$ is a smooth function on $\torus$ then for every $N$ and every $x$
	\[ |\fouriertransform{f}(x)| \le \frac{\eta_{f,N}}{(1+|x|)^N} \]
	where $\eta_{f,N}$ is uniformly computable from $N$ and $f$.
\end{lemma}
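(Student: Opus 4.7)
The plan is to bound $|\fouriertransform{f}(x)|$ by iterated integration by parts: since $f$ is smooth on $\torus$ with support inside $[0,1]$, the function and all of its derivatives vanish at the endpoints (by smoothness together with the compact-support hypothesis), so no boundary contributions appear, and for every $N$ and every $x\neq 0$
\[ \fouriertransform{f}(x) = \int_0^1 f(t)e^{-ixt}\dd{t} = \frac{1}{(ix)^N}\int_0^1 f^{(N)}(t)e^{-ixt}\dd{t} = \frac{\fouriertransform{f^{(N)}}(x)}{(ix)^N}. \]
Taking absolute values yields $|x|^N|\fouriertransform{f}(x)|\le \int_0^1 |f^{(N)}(t)|\dd{t}$, and the same reasoning applied at every intermediate order produces $|x|^k|\fouriertransform{f}(x)|\le \int_0^1 |f^{(k)}(t)|\dd{t}$ for $0\le k\le N$ and all $x\in\mathbb{R}$ (the case $k=0$ is the trivial bound). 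Writing $L_k:=\int_0^1 |f^{(k)}(t)|\dd{t}$ and expanding with the binomial theorem,
\[ (1+|x|)^N|\fouriertransform{f}(x)|\le \sum_{k=0}^{N}\binom{N}{k}|x|^k|\fouriertransform{f}(x)|\le \sum_{k=0}^{N}\binom{N}{k} L_k, \]
so I would take
\[ \eta_{f,N}\;:=\;\sum_{k=0}^{N}\binom{N}{k} L_k \]
as the witness.

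The second ingredient is the uniform computability of $\eta_{f,N}$. Recall that a name for the smooth function $f$ provides a $\compactMinCover$-name for $\support(f)$ together with a continuous-function name for each derivative $f^{(k)}$. From the $\compactMinCover$-information I can uniformly extract a rational $M$ with $\support(f)\subset [-M,M]$, so that $L_k=\int_{-M}^{M}|f^{(k)}(t)|\dd{t}$. The integrand $t\mapsto |f^{(k)}(t)|$ is continuous on the computably compact interval $[-M,M]$ and is computable from the name of $f^{(k)}$ by composition with the absolute value. Hence each $L_k$ is uniformly computable as the integral of a continuous function against (a rational multiple of) the uniform probability measure on $[-M,M]$ via \thref{thm:computability_measures}(6), and combining the finitely many such integrals with the computable weights $\binom{N}{k}$ produces $\eta_{f,N}$ uniformly in $N$ and in the given name of $f$.

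The only step that deserves attention is the verification that iterated integration by parts really does not leak a boundary term; this is where the hypothesis that $f$ is smooth \emph{on the torus} (equivalently, that it extends by zero to a smooth compactly supported function on $\mathbb{R}$) is essential. Once this is in place the analytic computation is entirely standard, and effectivity follows immediately because the explicit witness $\eta_{f,N}$ is a finite sum of $L^1$-norms of derivatives, each of which is uniformly computable from the data that the representation of $f$ already packages.
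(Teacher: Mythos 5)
Your argument is correct and is essentially the paper's own proof: both derive the identity $\fouriertransform{f^{(n)}}(x)=(ix)^n\fouriertransform{f}(x)$ (you via explicit repeated integration by parts, the paper by invoking it directly), both bound $|x|^n|\fouriertransform{f}(x)|$ by $\|f^{(n)}\|_{L^1}$, and both assemble $\eta_{f,N}$ via the binomial expansion of $(1+|x|)^N$ with uniform computability coming from the fact that a name for $f$ packages names for all its derivatives. Your write-up just makes the implicit witness $\eta_{f,N}=\sum_{k\le N}\binom{N}{k}\|f^{(k)}\|_{L^1}$ and the extraction of a support bound explicit, which is a welcome elaboration rather than a different route.
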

\begin{proof}
	The existence of $\eta_{f,N}$ follows from Paley-Wiener-Schwartz theorem (see \cite[Thm.\ 7.3.1]{Hormander2003}). To show that $\eta_{f,N}$ is uniformly computable from $N$ and $f$, notice that, since $\fouriertransform{f^{(n)}}(x)=(ix)^n \fouriertransform{f}(x)$, for every exponent $n$ we have
	\[ |\fouriertransform{f}(x)||x|^n = |\fouriertransform{f^{(n)}}(x)| \le \norm{f^{(n)}}{L^1}, \]
	and the latter is computable from a $\cont[0][c]{}(\mathbb{R},\mathbb{R})$-name of $f^{(n)}$. Using the binomial theorem, we can expand $|\fouriertransform{f}(x)|(1+|x|)^N$ into a finite sum, and then compute a bound for each of the terms.
\end{proof}

\begin{lemma}
	\thlabel{thm:bound_sum_psi}
	Let $\psi$ be a smooth function on $\torus$. For every $N>1$ there are $B_{\psi,N}>0$ and $M_0\in\mathbb{N}$ s.t.\ for every $M\ge M_0$
	\[ \sum_{m\in\mathbb{Z}\st |m|\ge M} |\fouriertransform{\psi}(m)| \le \frac{B_{\psi,N}}{M^{N-1}}. \]
	Moreover, the constants $B_{\psi,N}$ and $M_0$ are uniformly computable from $\psi$ and $N$.
\end{lemma}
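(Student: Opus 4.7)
The plan is to derive the bound directly from \thref{thm:schwartz_decay} by a standard integral comparison of the tail of a $p$-series. First, I would fix $N>1$ and apply \thref{thm:schwartz_decay} to obtain, for every $m \in \mathbb{Z}$, the pointwise estimate $|\fouriertransform{\psi}(m)| \le \eta_{\psi,N}(1+|m|)^{-N}$, where the constant $\eta_{\psi,N}$ is uniformly computable from $\psi$ and $N$ by the previous lemma.

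Next I would sum both sides over $|m|\ge M$. Using the symmetry $|m| = |-m|$ and the monotonicity of $x \mapsto (1+x)^{-N}$ on $[0,\infty)$, a standard integral comparison gives
\[ \sum_{m\in\mathbb{Z},\, |m|\ge M} |\fouriertransform{\psi}(m)| \le 2\eta_{\psi,N} \sum_{m\ge M} (1+m)^{-N} \le 2\eta_{\psi,N} \int_{M-1}^{\infty} (1+x)^{-N}\dd{x}. \]
The right-hand integral evaluates to $M^{1-N}/(N-1)$, yielding the stated bound with
\[ B_{\psi,N} := \frac{2\eta_{\psi,N}}{N-1} \qquad \text{and} \qquad M_0 := 1. \]

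Finally, the uniform computability is immediate: $N-1$ is computable from $N$, $\eta_{\psi,N}$ is uniformly computable from $\psi$ and $N$ by \thref{thm:schwartz_decay}, and $M_0$ is a fixed natural number, so $B_{\psi,N}$ is a computable rational combination of computable quantities. I do not expect any genuine obstacle here: the argument is a routine tail estimate, and the only point requiring minor care is to note that the exponent $N$ used in \thref{thm:schwartz_decay} is the same one appearing on the right-hand side (any exponent $\ge N$ in fact suffices, at the cost of inflating $\eta$), so no extra dependency on $M$ is introduced and the integral-comparison bound is effective.
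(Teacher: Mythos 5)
Your proof is correct and takes essentially the same approach as the paper: apply \thref{thm:schwartz_decay} to get the pointwise bound $|\fouriertransform{\psi}(m)| \le \eta_{\psi,N}(1+|m|)^{-N}$ and then control the tail sum by an integral comparison. The one (minor) difference is in the form of the comparison: the paper uses $\sum_{m\ge M} f(m) \le f(M) + \int_M^\infty f$, which leaves a residual $(1+M)^{-N}$ term that has to be absorbed by choosing $M_0$ suitably large depending on $N$, whereas your bound $\sum_{m\ge M} f(m) \le \int_{M-1}^\infty f$ collapses immediately to the single term $M^{1-N}/(N-1)$ and lets $M_0 = 1$ work uniformly in $N$ — slightly cleaner and still effective.
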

\begin{proof}
	Using \thref{thm:schwartz_decay} and the integral test for the convergence of series we can write
	\begin{align*}
		\sum_{m\in\mathbb{Z}\st |m|\ge M} |\fouriertransform{\psi}(m)| & \le \eta_{\psi,N} \sum_{m\in\mathbb{Z}\st |m|\ge M} \frac{1}{(1+|m|)^N} \\
			& \le 2 \eta_{\psi,N} \left( \frac{1}{(1+M)^N} + \int_M^\infty \frac{1}{(1+x)^N}\dd{x} \right) \\
			& \le 2 \eta_{\psi,N} \left( \frac{1}{(1+M)^N} + \frac{1}{N-1}\frac{1}{(1+M)^{N-1}} \right).
	\end{align*}	
	We can then compute $M_0$ s.t.\ for every $M\ge M_0$
	\[ \frac{1}{(1+M)^N} \le \frac{1}{N-1}\frac{1}{(1+M)^{N-1}}  \]
	In particular, for every $M\ge M_0$
	\[ \sum_{m\in\mathbb{Z}\st |m|\ge M} |\fouriertransform{\psi}(m)| \le 2 \eta_{\psi,N}\frac{2}{N-1}\frac{1}{(1+M)^{N-1}} \le \frac{B_{\psi,N}}{M^{N-1}}, \]
	where $B_{\psi,N}:= 4 \eta_{\psi,N}$.
\end{proof}

From now on, we fix a computable non-negative smooth function $\phi$ with support in $[0,1]$ and $\int \phi(x)\dd{x} = 1$. For every $\zeta>0$ we define $\phi^\zeta(x):=\zeta^{-1}\phi(x/\zeta)$. Clearly the map $\function*{\mathbb{R}}{\cont[\infty][c]{}(\mathbb{R},\mathbb{R})}:=\zeta \mapsto \phi^\zeta$ is computable. Let also $\Phi^\zeta(x):=\sum_{k\in\mathbb{Z}} \phi^\zeta(x-k)$ be the periodization of $\phi^\zeta$ and $\Phi^\zeta_p(x):=\Phi^\zeta(px)$. Both $\Phi^\zeta$ and $\Phi^\zeta_p$ are uniformly computable in $\zeta$ and $p$.

For $M>2$, let $\mathbf{P}_M:= \{ p\in \mathbb{N} \st p$ is prime and $M/2<p\le M \}$. We define
\[ F^\zeta_M(x) := \frac{1}{|\mathbf{P}_M|} \sum_{p\in \mathbf{P}_M} \Phi^\zeta_p(x). \]
The map $F^\zeta_M$ is smooth, $1$-periodic, and $\int_{0}^{1} F^\zeta_M(x)\dd{x}=1$. In particular, we see it as a function in $L^1(\torus)$. Notice that the functions $F_M^\zeta$ are uniformly computable in $M$, and $\zeta$. 

\begin{lemma}
	\thlabel{thm:coefficients_FM}
	For every $\zeta$ and $M$ as above we have:
	\begin{enumerate}
		\item $\fouriertransform{F^\zeta_M}(0)=1$,
		\item if $0<|k| \le  M/2$ then $\fouriertransform{F^\zeta_M}(k)=0$,
		\item for every $N$ there is $C_N>0$, independent of $M$ and $\zeta$ and uniformly computable in $N$, s.t.\ for every $k\neq 0$,
		\[ |\fouriertransform{F^\zeta_M}(k)| \le C_N \frac{\log(|k|)}{M}\left(1+ \zeta\frac{|k|}{M}\right)^{-N}.\]
	\end{enumerate}
\end{lemma}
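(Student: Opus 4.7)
The plan is to compute the Fourier coefficients of each $\Phi^\zeta_p$ on $\torus$ and then average. Because $\phi^\zeta$ has compact support in $[0,\zeta]$, the periodization $\Phi^\zeta$ is a smooth function on $\torus$ whose Fourier coefficients equal $\fouriertransform{\phi^\zeta}$ evaluated at the integers, which in turn equals $\fouriertransform{\phi}$ evaluated at integer multiples of $\zeta$ (up to the usual normalization, which will be absorbed into constants). Since $\Phi^\zeta_p(x)=\Phi^\zeta(px)$, substituting the Fourier series of $\Phi^\zeta$ into $\Phi^\zeta_p$ yields
\[
\fouriertransform{\Phi^\zeta_p}(k)=\begin{cases}\fouriertransform{\phi}(\zeta k/p),&p\mid k,\\ 0,&\text{otherwise,}\end{cases}
\]
so the Fourier support of $\Phi^\zeta_p$ lies in the multiples of $p$.

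Parts (1) and (2) follow at once from the averaging. For (1), $\fouriertransform{\Phi^\zeta_p}(0)=\fouriertransform{\phi}(0)=\int\phi=1$ for every $p$, and so does the average. For (2), any $p\in\mathbf{P}_M$ satisfies $p>M/2\geq|k|>0$, so $p\nmid k$ and every term in the sum vanishes.

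For (3), I bound
\[
|\fouriertransform{F^\zeta_M}(k)|\leq \frac{1}{|\mathbf{P}_M|}\,\#\{p\in\mathbf{P}_M\st p\mid k\}\cdot\sup_{\substack{p\in\mathbf{P}_M\\ p\mid k}}|\fouriertransform{\phi}(\zeta k/p)|.
\]
Each $p\in\mathbf{P}_M$ dividing $k$ exceeds $M/2$, and the primes in question have product dividing $|k|$, so there are at most $\log|k|/\log(M/2)$ of them. A rerun of the integration-by-parts argument of \thref{thm:schwartz_decay} for $\phi$ viewed on $\mathbb{R}$ (instead of on $\torus$) gives $|\fouriertransform{\phi}(\xi)|\leq \eta_N(1+|\xi|)^{-N}$ with $\eta_N$ uniformly computable in $N$; and since $p\leq M$ we have $(1+\zeta|k|/p)^{-N}\leq(1+\zeta|k|/M)^{-N}$. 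An effective Chebyshev-type lower bound gives $|\mathbf{P}_M|\geq cM/\log M$ for an absolute computable constant $c>0$ and all $M$ above some computable threshold, while the finitely many smaller values of $M$ can be absorbed into the final constant.

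Combining these estimates yields $|\fouriertransform{F^\zeta_M}(k)|\leq C_N\frac{\log|k|}{M}(1+\zeta|k|/M)^{-N}$ with $C_N$ uniformly computable in $N$, as required. The only mildly delicate point is the effective lower bound on $|\mathbf{P}_M|$; once one has an explicit Chebyshev estimate (or, more crudely, Bertrand's postulate together with an explicit finite-case check up to a computable threshold), everything else is bookkeeping of uniformly computable constants.
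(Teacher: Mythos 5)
Your proposal is correct and follows essentially the same route as the paper: expand $\fouriertransform{\Phi^\zeta_p}(k)$ as $\fouriertransform{\phi}(\zeta k/p)$ when $p\mid k$ (and $0$ otherwise), observe that at most $\log|k|/\log(M/2)$ primes in $\mathbf{P}_M$ divide $|k|$, invoke the Schwartz decay of $\fouriertransform{\phi}$, and divide by a lower bound on $|\mathbf{P}_M|$ of the form $cM/\log M$. One small remark in your favour: you correctly identify that what is needed is an effective \emph{lower} bound $|\mathbf{P}_M|\ge cM/\log M$, whereas the paper writes the inequality as $|\mathbf{P}_M|\le AM/\log M$ before setting $C_N=A^{-1}C\eta_{\phi,N}$; the $A^{-1}$ in that formula only makes sense if the bound is read as a lower bound, so the paper's stated direction appears to be a slip that you have implicitly corrected. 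The only other cosmetic difference is that you appeal to an effective Chebyshev/Bertrand estimate rather than the prime number theorem, which is if anything a more elementary and self-evidently computable source for the required bound.
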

\begin{proof}
	Everything but the uniform computability of $C_N$ is proved in \cite[Ch.\ 9, pp.\ 69--70]{Wolff03}. In particular, the points $(1)$ and $(2)$ follow from the fact that $\fouriertransform{\Phi^\zeta_p}(k)$ is $\fouriertransform{\phi}(\zeta k/p)$ if $p|k$, and $0$ otherwise.
	Notice that, by the decomposition in prime factors, $|k|$ has at most $\log(|k|)/\log(M/2)$ divisors in $\mathbf{P}_M$. In particular, there is a computable constant $C$ independent of $|k|$ and $M$ s.t.\ $|k|$ has at most $C \log(|k|)/\log(M)$ divisors in $\mathbf{P}_M$.
	
	By the prime number theorem (see \cite[Sec.\ 22.19 and eq.\ (22.19.3)]{HardyWright08}), $|\mathbf{P}_M|$ is asymptotically distributed as $M/(2\log(M))$. In particular, there is a constant $A>0$ s.t.\ $|\mathbf{P}_M|\le A M/\log(M)$. The argument in \cite{Wolff03} shows that $C_N:=  A^{-1} C \eta_{\phi,N}$ satisfies the statement, where $\eta_{\phi,N}$ comes from \thref{thm:schwartz_decay}.
\end{proof}

From now on, we let $F_M:=F_M^{M^{-1-\alpha}}$. This guarantees that $\support(F_M)\subset \bigcup_{p\in \mathbf{P}_M} D_p$. Moreover, choosing $N=1$, the previous lemma states the existence of a constant $C_1$ s.t.\ for every $M>2$ and $k\neq 0$
\[ |\fouriertransform{F_M}(k)| \le C_1 M^{1+\alpha} \frac{\log(|k|)}{|k|}.\]

The following two lemmas provide the main technical tools to prove the effectiveness of the map $\alpha\mapsto S(\alpha)$, which will be proved in \thref{thm:s_alpha_effective}.

\begin{lemma}
	\thlabel{thm:estimate_FM}
	Let $\psi$ be a smooth non-negative function on $\torus$. There exists $C>0$ and $\tilde M\in \mathbb{N}$, uniformly computable in $\psi$ and $\alpha$, s.t.\ for every $M\ge \tilde M$ we have
	\begin{enumerate}
		\item for every $k\in\mathbb{Z}$, $|\fouriertransform{\psi F_M}(k)-\fouriertransform{\psi}(k)| \le C M^{-1} \log(M)$;
		\item for every $k\in\mathbb{Z}$ with $|k|>2M^{2+\alpha}$, $|\fouriertransform{\psi F_M}(k)-\fouriertransform{\psi}(k)| \le C M^{-1} \log(|k|)\left(1+ \frac{|k|}{M^{2+\alpha}}\right)^{-2}$.
	\end{enumerate}
\end{lemma}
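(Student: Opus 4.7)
The starting point is the convolution identity on $\torus$: since $F_M$ is $1$-periodic and smooth, $\widehat{\psi F_M}(k) = \sum_{j\in\mathbb Z}\fouriertransform{\psi}(k-j)\fouriertransform{F_M}(j)$. By parts (1) and (2) of \thref{thm:coefficients_FM}, $\fouriertransform{F_M}(0)=1$ and $\fouriertransform{F_M}(j)=0$ for $0<|j|\le M/2$, so the difference we want to estimate collapses to
\[
\fouriertransform{\psi F_M}(k)-\fouriertransform{\psi}(k) \;=\; \sum_{|j|>M/2}\fouriertransform{\psi}(k-j)\,\fouriertransform{F_M}(j).
\]
I will bound this tail by combining \thref{thm:coefficients_FM}(3) with the Schwartz decay of $\fouriertransform{\psi}$ supplied by \thref{thm:schwartz_decay}. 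In particular, for any fixed integer $r\ge 2$, the constants $\eta_{\psi,r}$ giving $|\fouriertransform{\psi}(l)|\le \eta_{\psi,r}(1+|l|)^{-r}$ are uniformly computable from $\psi$, and the quantity $\norm{\fouriertransform{\psi}}{\ell^1}$ is likewise computable (and finite because $r=2$ suffices).

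For \textbf{(1)}, I apply \thref{thm:coefficients_FM}(3) with $N=2$ to get $|\fouriertransform{F_M}(j)|\le C_2\log|j|\,M^{-1}(1+|j|M^{-(2+\alpha)})^{-2}$. I then show that the supremum of $|\fouriertransform{F_M}(j)|$ over $|j|>M/2$ is bounded by $C'\log(M)/M$ uniformly in $j$: on the range $M/2<|j|\le M^{2+\alpha}$ the factor $(1+|j|M^{-(2+\alpha)})^{-2}$ is bounded by $1$ and $\log|j|\le(2+\alpha)\log M$; on $|j|>M^{2+\alpha}$ the estimate $(1+|j|M^{-(2+\alpha)})^{-2}\le (M^{2+\alpha}/|j|)^2$ combined with monotonicity of $\log|j|/|j|$ for large $|j|$ produces an identical bound. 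Dominating $\sum_{j}|\fouriertransform{\psi}(k-j)|$ by $\norm{\fouriertransform{\psi}}{\ell^1}$ then yields $|\fouriertransform{\psi F_M}(k)-\fouriertransform{\psi}(k)|\le C_1'\log(M)/M$ uniformly in $k$, with $C_1'$ computable from $\psi,\alpha$.

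For \textbf{(2)}, assume $|k|>2M^{2+\alpha}$ and split the tail at $|j|=|k|/2$. On the \emph{far} range $|j|>|k|/2$, I again use \thref{thm:coefficients_FM}(3) with $N=2$; since $|k|>2M^{2+\alpha}$ forces $|j|>M^{2+\alpha}$, the factor $(1+|j|M^{-(2+\alpha)})^{-2}$ is controlled by $(M^{2+\alpha}/|j|)^2$, and monotonicity of $\log|j|/|j|^2$ on $|j|\ge |k|/2$ bounds the sup of $|\fouriertransform{F_M}(j)|$ on that range by a multiple of $\log|k|\,M^{-1}(M^{2+\alpha}/|k|)^2$; pairing with $\norm{\fouriertransform{\psi}}{\ell^1}$ gives exactly the target bound up to a constant. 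On the \emph{near} range $M/2<|j|\le |k|/2$ I exploit $|k-j|\ge |k|/2$ and apply \thref{thm:schwartz_decay} with $N=r$ large (taking $r=5$ gives ample room): this provides $|\fouriertransform{\psi}(k-j)|\le \eta_{\psi,5}\,2^{5}|k|^{-5}$, and since the number of summands is $O(|k|)$ while $|\fouriertransform{F_M}(j)|\lesssim \log|k|/M$ on this range, the near contribution is $O(\log|k|\,|k|^{-4}M^{-1})$, which is absorbed into $C\log|k|\,M^{-1}(1+|k|M^{-(2+\alpha)})^{-2}$ provided $M\ge \tilde M$ for a computable threshold $\tilde M$ depending on $\alpha$. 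Finally, the inequality $(M^{2+\alpha}/|k|)^2\le 4(1+|k|/M^{2+\alpha})^{-2}$, valid for $|k|\ge M^{2+\alpha}$, converts the intermediate bounds into the statement of the lemma.

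The only real obstacle is the bookkeeping: I must check at every step that the constants ($\eta_{\psi,r}$, $C_r$ from \thref{thm:coefficients_FM}, $\norm{\fouriertransform\psi}{\ell^1}$) and the threshold $\tilde M$ are uniformly computable from $\psi$ and $\alpha$, which is why the previous lemmas were phrased effectively. There is no conceptual subtlety beyond the two-region split, but the constants need to be chosen in a fixed order ($\alpha$ first determines the exponents, then $\psi$ determines the Schwartz constants, then $\tilde M$ is chosen so that all absorbed error terms are dominated by the main term) to ensure the final $C$ and $\tilde M$ depend only on $\psi$ and $\alpha$ as asserted.
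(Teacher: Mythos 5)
Your proposal is correct and follows the same global strategy as the paper: start from the convolution identity, use \thref{thm:coefficients_FM}(1)--(2) to kill the terms with $|j|\le M/2$, bound $\sum_m|\fouriertransform{\psi}(m)|$ by a computable constant $A_\psi$ for part (1), and for part (2) split the remaining sum at $|j|=|k|/2$. The only genuine divergence is in how you handle the near range (the terms with $|m|\ge |k|/2$ in the paper's indexing, $|j|\le |k|/2$ in yours). The paper invokes \thref{thm:bound_sum_psi} with $N=3$ to get $\sum_{|m|\ge |k|/2}|\fouriertransform{\psi}(m)|\le B_{\psi,3}|k|^{-2}$ for $|k|\ge 2\tilde M$, and pairs this with the trivial bound $|\fouriertransform{F_M}|\le 1$; you instead use the pointwise Schwartz bound of \thref{thm:schwartz_decay} with $N=5$, count the $O(|k|)$ summands, and use the non-trivial bound $|\fouriertransform{F_M}(j)|\lesssim \log|k|/M$. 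Both yield $O(\log|k|\,|k|^{-4}M^{-1})$ or better, which is dominated by the target $CM^{-1}\log|k|\,(1+|k|/M^{2+\alpha})^{-2}$ on the range $|k|>2M^{2+\alpha}$ once $M\ge\tilde M$, and both produce computable constants. The paper's route is slightly tidier because \thref{thm:bound_sum_psi} was prepared exactly for this purpose, while yours spends a factor $|k|$ on the cardinality of the range and a higher Schwartz exponent to compensate; but nothing is lost. One small slip in your write-up: in the part-(1) subcase $|j|>M^{2+\alpha}$ you invoke ``monotonicity of $\log|j|/|j|$'' where the quantity you are actually bounding is $\log|j|/|j|^2$ (or, if one first uses $|j|^{-1}<M^{-(2+\alpha)}$, then indeed $\log|j|/|j|$); either variant closes the estimate, so this is a phrasing issue rather than a gap.
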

\begin{proof}
	For the sake of readability, let us define, for $t\ge 1$, and $M>0$,
	\[ f_M(t):=\frac{\log(t)}{M}\left(1+ \frac{t}{M^{2+\alpha}}\right)^{-2} = \frac{\log(t)}{M}\left(\frac{M^{2+\alpha}}{M^{2+\alpha}+t}\right)^2. \]
	Notice that $f_M$ is strictly decreasing when $2t\log(t) \ge M^{2+\alpha}+t$, hence in particular for $t\ge M^{2+\alpha}$. Clearly, $f_M$ is uniformly computable in $\alpha$ and $M$. Moreover, given $\alpha$, we can uniformly compute a constant $T_\alpha$ s.t.\ for every $M>2$ and $t$, $f_M(t)\le T_\alpha M^{-1}\log(M)$. Indeed, for $s\ge M^{-2-\alpha}$,
	\begin{align*}
		f_M(sM^{2+\alpha}) & =\frac{(2+\alpha)\log(M)+ \log(s)}{M}\left(\frac{1}{1+s}\right)^{2} \\
			& = \frac{\log(M)}{M} \left(2+\alpha+\frac{\log(s)}{\log(M)}\right)\left(\frac{1}{1+s}\right)^{2} \le \frac{\log(M)}{M}\left(2+\alpha + \max_{s>0} \frac{\log(s)}{(1+s)^2} \right).
	\end{align*}

	To prove the first part of the lemma, let us notice that, by the known properties of the Fourier transform and by \thref{thm:coefficients_FM}, we have
	\[ |\fouriertransform{\psi F_M}(k)-\fouriertransform{\psi}(k)| = \left| \sum_{m\in\mathbb{Z}}\fouriertransform{\psi}(m)\fouriertransform{F_M}(k-m)-\fouriertransform{\psi}(k) \right|\le   \sum_{m\in\mathbb{Z}\st |k-m|>M/2}|\fouriertransform{\psi}(m)||\fouriertransform{F_M}(k-m)|. \]	
	Using \thref{thm:schwartz_decay} and a simple argument on power series, it is easy to show that there is a constant $A_\psi$, uniformly computable from $\psi$, s.t.\ $\sum_{m\in\mathbb{Z}}|\fouriertransform{\psi}(m)| \le A_\psi$. In particular,
	\begin{align*}
		|\fouriertransform{\psi F_M}(k)-\fouriertransform{\psi}(k)| & \le A_\psi \max_{m\in\mathbb{Z}\st |k-m|>M/2 } |\fouriertransform{F_M}(k-m)| \\
			& \le A_\psi C_2  \max_{m\in\mathbb{Z}\st |k-m|>M/2 }  f_M(|k-m|) \le C' M^{-1}\log(M),
	\end{align*}
	where $C_2$ is the constant provided by \thref{thm:coefficients_FM} and $C':=A_\psi C_2 T_\alpha$. This proves the first part of the claim.
	
	\smallskip
	Assume now that $|k|> 2M^{2+\alpha}$. We can write
	\[ |\fouriertransform{\psi F_M}(k)-\fouriertransform{\psi}(k)| \le  \sum_{m\in\mathbb{Z}\st 0<|k-m|\le |k|/2 }|\fouriertransform{\psi}(m)||\fouriertransform{F_M}(k-m)|+ \sum_{m\in\mathbb{Z}\st |k-m|> |k|/2 }|\fouriertransform{\psi}(m)||\fouriertransform{F_M}(k-m)|. \]

	For the first sum, notice that $|k-m|\le |k|/2$ implies $|m|\ge |k|/2$. Using \thref{thm:bound_sum_psi}, we can compute $\tilde M$ and $B_{\psi,3}$ s.t.\ for every $|k|\ge 2\tilde M$
	\[ \sum_{m\in\mathbb{Z}\st |m|\ge |k|/2 } |\fouriertransform{\psi}(m)| \le \frac{B_{\psi, 3}}{|k|^2}. \]
	Using also the fact that $\fouriertransform{F_M}(x)\le \fouriertransform{F_M}(0)=1$, we have
	\[ \sum_{m\in\mathbb{Z}\st 0<|k-m|\le |k|/2 }|\fouriertransform{\psi}(m)||\fouriertransform{F_M}(k-m)| \le \frac{B_{\psi, 3}}{|k|^2} \le B_{\psi, 3} f_M(|k|),  \]
	where the second inequality follows from the fact that, for $M>1$ and $|k|>2M^{2+\alpha}$, $|k|^{-2} \le f_M(|k|)$.

	The second sum can be majorized as follows:
	\begin{align*}
		\sum_{m\in\mathbb{Z}\st |k-m|> |k|/2 }|\fouriertransform{\psi}(m)||\fouriertransform{F_M}(k-m)| & \le A_\psi \max_{m\in\mathbb{Z}\st |k-m|> |k|/2 } C_2 f_M(|k-m|) \\
		& \le A_\psi C_2 f_M\left( \frac{|k|}{2} \right) \le 4 A_\psi C_2 f_M(|k|),
	\end{align*}
	where the second inequality follows from the fact that $f_M$ is decreasing for $|k-m|>|k|/2>M^{2+\alpha}$, while the last inequality follows from $f_M(|k|/2)\le 4 f_M(|k|)$.

	We can combine the two estimates to conclude that, for $|k|>2M^{2+\alpha}$ and $M>\tilde M$
	\[ |\fouriertransform{\psi F_M}(k)-\fouriertransform{\psi}(k)| \le C'' f_M(|k|), \]
	with $C'':=4A_\psi C_2 + B_{\psi,3}$.

	To conclude the proof it is enough to define $C:=\max\{C',C''\}$.
\end{proof}

Let us define, for $x\ge 0$
\[ g(x):= \begin{cases}
	x^{-\frac{1}{2+\alpha}}\log(x) & \text{if } x\ge x_0, \\
	x_0^{-\frac{1}{2+\alpha}}\log(x_0) 	& \text{otherwise},
\end{cases} \]
where $x_0:=e^{2+\alpha}$ is the maximum point of $g$. Notice that $g(x)$ is strictly decreasing for $x>x_0$.

\begin{lemma}
	\thlabel{thm:effective_G}
	Let $\psi$ be a non-negative smooth function on $\torus$. For every $\varepsilon>0$ and $M_0\in \mathbb{N}$ with $M_0>x_0$, there is a finite sequence $M_1<\hdots<M_N$, uniformly computable in $\alpha$, $\psi$, $\varepsilon$, and $M_0$, s.t.\ $M_0<M_1$ and, for every $k$,
	\[ |\fouriertransform{\psi G}(k)-\fouriertransform{\psi}(k)| \le \varepsilon g(|k|) \]
	where $G:=N^{-1}\sum_{i=1}^N F_{M_i}$.	
\end{lemma}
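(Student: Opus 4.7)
The plan is to choose the $M_i$ to grow geometrically, specifically $M_i := 2^i M_*$ for a starting value $M_*$ determined below, and then bound
\[
|\fouriertransform{\psi G}(k) - \fouriertransform{\psi}(k)| \le \frac{1}{N} \sum_{i=1}^N |\fouriertransform{\psi F_{M_i}}(k) - \fouriertransform{\psi}(k)|
\]
by splitting the inner sum according to whether $2M_i^{2+\alpha} \ge |k|$ or not, and applying the two estimates of \thref{thm:estimate_FM} in the respective regimes. First, I will pick $M_*$ to be a computable upper bound for $\max\{M_0, \tilde M, x_0^{1/(2+\alpha)}, e\}$, where $\tilde M$ is given by \thref{thm:estimate_FM}. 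This guarantees that $x \mapsto x^{-1}\log(x)$ is decreasing on $[M_*,\infty)$ and that bound (1) applies to every $M_i$.

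For a fixed $k$ I will separate the contributions. In the \emph{high-frequency regime} $i \in I_H := \{i \st 2M_i^{2+\alpha} \ge |k|\}$, I will use bound (1): $|\fouriertransform{\psi F_{M_i}}(k) - \fouriertransform{\psi}(k)| \le C M_i^{-1}\log(M_i)$. Since the $M_i$ grow geometrically, the sum over $I_H$ is dominated (up to a multiplicative constant depending only on $\alpha$) by its smallest term, which is bounded by $M_{i_*}^{-1}\log(M_{i_*})$ where $M_{i_*}$ is the least $M_i \ge (|k|/2)^{1/(2+\alpha)}$. Using monotonicity of $x^{-1}\log(x)$ on $[M_*,\infty)$ and the definition of $g$, this in turn is $\le C' g(|k|)$ for a computable $C'$. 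In the \emph{low-frequency regime} $i \in I_L := \{i \st 2M_i^{2+\alpha} < |k|\}$, I will use bound (2) which, since $|k|/M_i^{2+\alpha} > 2$, simplifies to $C \log(|k|) M_i^{3+2\alpha}/|k|^2$. Again by geometric growth, the sum over $I_L$ is dominated by its largest term, for which $M_i^{2+\alpha}<|k|/2$, giving a total contribution at most $C'' \log(|k|) |k|^{(3+2\alpha)/(2+\alpha)-2} = C'' g(|k|)$, using the key arithmetic identity $(3+2\alpha)/(2+\alpha) - 2 = -1/(2+\alpha)$.

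Combining the two regimes yields $\sum_{i=1}^N |\fouriertransform{\psi F_{M_i}}(k) - \fouriertransform{\psi}(k)| \le C''' g(|k|)$ for a constant $C'''$ uniformly computable from $\alpha$, $\psi$, and $M_0$. Choosing then $N := \lceil C'''/\varepsilon \rceil$ and setting $M_i := 2^i M_*$ for $1 \le i \le N$ gives the desired estimate uniformly in $k$. The edge case $|k| \le x_0$ (where $g$ is the constant $x_0^{-1/(2+\alpha)}\log(x_0)$, which includes $k=0$) is handled by the same bound (1) applied to every $M_i$: by enlarging $M_*$ if necessary we ensure $C M_*^{-1}\log(M_*) \le \varepsilon g(x_0)$, which is a computable threshold.

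The only real subtlety is arranging the constants and the threshold $M_*$ so that everything is \emph{uniformly} computable in $(\alpha, \psi, \varepsilon, M_0)$; this follows because $C$ and $\tilde M$ from \thref{thm:estimate_FM} are themselves uniformly computable, and the bookkeeping involving $g$ and the geometric sums introduces only computable constants depending on $\alpha$. The main obstacle I anticipate is presenting the geometric sum estimates in the two regimes cleanly without ad hoc manipulations, in particular making sure that the indices $I_H$ and $I_L$ cover all of $\{1,\dots,N\}$ (with a possible overlap at the boundary) and that the dominant terms there genuinely match $g(|k|)$ up to a computable constant.
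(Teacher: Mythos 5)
Your proposal is correct, and it takes a genuinely different route from the paper's proof. The paper proceeds adaptively: it first fixes $N$ with $C/N < \varepsilon/4$ and a threshold $M'$ with $C\log(x)/x \le (\varepsilon/4)g(x)$ for $x\ge M'$, then constructs $M_1<\dots<M_N$ one at a time under the super-exponential growth condition $M_{j+1}>2M_j^{2+\alpha}$, together with a tail condition forcing $\frac{1}{N}\sum_{i\le j}E_i(k)\le \frac{\varepsilon}{4}g(|k|)$ for $|k|>M_{j+1}$. For each $k$ it then locates the unique $j$ with $M_j<|k|\le M_{j+1}$ and splits the error into the blocks $i<j$, $i=j$, $i>j$, bounding each separately. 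You instead fix the explicit geometric sequence $M_i=2^iM_*$ from the start, partition the index set (at a fixed $k$) by whether $2M_i^{2+\alpha}\ge |k|$ or $<|k|$, and bound each half by a geometric-sum argument: the high-frequency tail $\sum \log M_i/M_i$ is dominated by its first term (since $\log x/x$ decays superexponentially along $M_i$), and the low-frequency sum $\sum M_i^{3+2\alpha}$ is dominated by its last term (since it grows geometrically). This produces a bound $\sum_{i\le N}E_i(k)\le C'''g(|k|)$ with $C'''$ independent of $N$ and of $M_*$ (given the floor on $M_*$), so $N$ can be chosen last. The two proofs use exactly the same two estimates from \thref{thm:estimate_FM}; the difference is organizational. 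Your version has the advantage that the $M_i$ are given by a closed formula, which makes the uniform computability in $(\alpha,\psi,\varepsilon,M_0)$ more transparent; the paper's iterative scheme builds the needed tail bound directly into the definition of $M_{j+1}$, at the cost of a more implicit construction. One small caveat in your write-up: the edge case where the monotonicity of $x\mapsto x^{-1}\log x$ cannot yet be invoked to compare $\log M_{i_*}/M_{i_*}$ with $g(|k|)$ actually extends up to $|k|<2x_0$ (i.e.\ $(|k|/2)^{1/(2+\alpha)}<e$), not just $|k|\le x_0$; this is harmless since $g$ is bounded below by a computable positive constant on $[0,2x_0]$, but it should be stated, since otherwise the sharp reader will wonder why taking $M_*$ large handles $|k|$ slightly above $x_0$.
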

\begin{proof}
	Let $C,\tilde M$ be the constants provided by \thref{thm:estimate_FM}. We choose $N$ sufficiently large so that
	\[ \frac{C}{N}< \frac{\varepsilon}{4}. \]
	We also choose $M'$ sufficiently large so that $M'\ge \max\{ M_0, \tilde M \}$ and, for every $x\ge M'$,
	\[ C \frac{\log(x)}{x} \le \frac{\varepsilon}{4} g(x). \]
	The existence of such $M'$ follows from the fact that $x^{-1}\log(x) = o(g(x))$.

	Let us define $E_i(k):=|\fouriertransform{\psi F_{M_i}}(k)-\fouriertransform{\psi}(k)|$. We define the sequence $M_1<\hdots<M_N$ iteratively so that for every $j<N$
	\begin{itemize}
		\item $M_{j+1}> 2M_j^{2+\alpha}$,
		\item for every $|k| > M_{j+1}$, $\frac{1}{N} \sum_{i=1}^j E_i(k)\le \frac{\varepsilon}{4} g(|k|)$.
	\end{itemize}
	The second condition can always be satisfied as, by \thref{thm:estimate_FM},
	\begin{enumerate}
		\item for every $k$, $E_i(k) \le C M_i^{-1} \log(M_i)$;
		\item for every $k\in\mathbb{Z}$ with $|k|>2M_i^{2+\alpha}$, $E_i(k) \le C g(|k|)$.
	\end{enumerate}

	In fact, it is straightforward to see that, uniformly computably in $\psi$, $\alpha$, and $\varepsilon$, we can choose $N$, $M'$ and $\sequence{M_i}{1\le i \le N}$ so that they satisfy the above conditions.

	To show that the claim is satisfied with this choice of $N,M_1,\hdots,M_N$ we proceed as in \cite[Ch.\ 9, pp. 71--72]{Wolff03}. Let $j\in \{1,\hdots, N\}$ and let $k$ s.t.\ $M_j < |k| \le M_{j+1}$ (the cases $k\le M_1$ and $k> M_N$ are analogous).
	\begin{align*}
		|\fouriertransform{\psi G}(k)-\fouriertransform{\psi}(k)| & \le \frac{1}{N}\sum_{i=1}^N E_i(k)\\
			& = \frac{1}{N}\sum_{i=1}^{j-1} E_i(k) + \frac{1}{N} E_j(k) + \frac{1}{N}\sum_{i=j+1}^{N} E_i(k)  \\
			& \le \frac{\varepsilon}{4} g(|k|)+\frac{1}{N}\left(C\frac{\log(M_j)}{M_j}+Cg(|k|)\right)+ \frac{1}{N}\sum_{i=j+1}^{N} C \frac{\log(M_i)}{M_i}\\
			& \le  \frac{\varepsilon}{4} g(|k|)+ \frac{\varepsilon}{4} g(|k|)+ \frac{\varepsilon}{4} g(|k|)+ \frac{N-j}{N}C \frac{\log(M_{j+1})}{M_{j+1}} \le \varepsilon g(|k|).\qedhere
	\end{align*}
\end{proof}

We are finally ready to prove the effectiveness of the map $\alpha \mapsto S(\alpha)$.

\begin{theorem}
	\thlabel{thm:s_alpha_effective}
	The following maps are computable:
	\begin{gather*}
		\function*{\mathbb{R}\times \mathbb{N}}{\cantor}:= (\alpha,k)\mapsto P(\alpha,k)\\
		\function*{\mathbb{R}\times\mathbb{N}}{\hypCompactV([0,1])}:= (\alpha,n)\mapsto D_n(\alpha)\\
		\function*{\mathbb{R}\times \mathbb{N}}{\hypCompactV([0,1])}:= (\alpha,k)\mapsto \bigcup_{n\in P(\alpha,k)} D_n(\alpha)\\
		\function*{\mathbb{R}}{\hypCompactUF([0,1])}:= \alpha \mapsto S(\alpha)
	\end{gather*}
\end{theorem}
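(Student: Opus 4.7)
The plan is to construct $P(\alpha, k)$ by iteratively applying \thref{thm:effective_G}, and then to derive the remaining three maps from standard hyperspace computability facts. Fix a computable positive summable sequence $\varepsilon_k$ (e.g.\ $\varepsilon_k = 2^{-k}$), set $\psi_0 \equiv 1$ on $\torus$, and let $M_0^{(1)}$ be a computable integer exceeding $x_0 = e^{2+\alpha}$. Recursively, given $\psi_{k-1}$ and $M_0^{(k)}$, invoke \thref{thm:effective_G} with parameters $(\alpha, \psi_{k-1}, \varepsilon_k, M_0^{(k)})$ to obtain a finite sequence $M_1^{(k)} < \cdots < M_{N_k}^{(k)}$ uniformly computable in the input, and set
\[ G_k := \frac{1}{N_k}\sum_{i=1}^{N_k} F_{M_i^{(k)}}, \quad \psi_k := \psi_{k-1}\, G_k, \quad M_0^{(k+1)} := M_{N_k}^{(k)} + 1, \quad P(\alpha, k) := \bigcup_{i=1}^{N_k} \mathbf{P}_{M_i^{(k)}}. \]
Since primality is decidable and each $\mathbf{P}_M$ is a finite set bounded by $M$, $P(\alpha, k)$ is a computable finite subset of $\mathbb{N}$, yielding the first map.

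For the second map, $D_n(\alpha) = \bigcup_{m=0}^{n}\bigl([m/n - n^{-2-\alpha}, m/n + n^{-2-\alpha}] \cap [0,1]\bigr)$ is a finite union of closed intervals whose endpoints are computable from $(\alpha, n)$; deciding whether a rational open ball of $[0,1]$ meets $D_n(\alpha)$ or is contained in its complement is decidable in both cases, so $D_n(\alpha)$ is uniformly computable in $\hypCompactV([0,1])$. The third map then follows by \thref{thm:computability_union_intersection}(1), since the union is taken over the finite computable set $P(\alpha, k)$. For the fourth, $S(\alpha) = \bigcap_k \bigcup_{n \in P(\alpha, k)} D_n(\alpha)$ is the intersection of a computable sequence of closed subsets of $[0,1]$, so \thref{thm:computability_union_intersection}(2) produces a $\hypClosedUF([0,1])$-name, which is automatically a $\hypCompactUF([0,1])$-name because $S(\alpha) \subset [0,1]$ is compact.

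The principal technical concern lies in the iteration: one must verify that $\psi_k$, as an element of the represented space of smooth functions on $\torus$, is uniformly computable from $\psi_{k-1}$ and $G_k$, so that \thref{thm:effective_G} can be reinvoked at the next stage. This reduces to observing that pointwise multiplication and differentiation of smooth functions on the computably compact space $\torus$ are computable operations, and that Leibniz's rule expresses $\psi_k^{(n)}$ as a computable finite sum of products of derivatives of $\psi_{k-1}$ and $G_k$. Once this is in place, all the hypotheses of \thref{thm:effective_G} are satisfied computably at each stage, and the construction is uniform in $\alpha$ and $k$.
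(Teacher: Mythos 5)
Your construction is the same as the paper's: you iterate \thref{thm:effective_G} on the running product of the $G_m$'s, extract the primes appearing at each stage, and derive the remaining three maps via \thref{thm:computability_union_intersection}. The one point where you diverge, and where your version is actually the more defensible one, is the definition of $P(\alpha,k)$. The paper sets $P(\alpha,k+1):= \bigcup_{m=1}^{k+1}\bigcup_{i=1}^{N_m}\mathbf{P}_{M_{i,m}}$, which makes the sets $P(\alpha,k)$ increasing in $k$ for $k\ge 1$; then $\bigcup_{n\in P(\alpha,k)}D_n$ is increasing and the intersection $S(\alpha)$ collapses to $\bigcup_{n\in P(\alpha,1)}D_n$, a finite union of non-degenerate intervals of dimension $1$. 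That is inconsistent with $\dim S(\alpha)=2/(2+\alpha)$ and with \thref{thm:t_alpha_construction}, which takes for granted that the levels $S^{(k)}(\alpha)$ are \emph{not} nested. Your stagewise definition $P(\alpha,k):=\bigcup_{i=1}^{N_k}\mathbf{P}_{M_i^{(k)}}$ is what the argument actually needs: it gives $\support(\mu)\subset\bigcap_k\bigcup_{n\in P(\alpha,k)}D_n$, and since $\min P(\alpha,k)\to\infty$ it also gives $S(\alpha)\subset E(\alpha)$, which is exactly the combination required. (You should also fix a trivial base case $P(\alpha,0):=\{1\}$, as the paper does, so that the $k=0$ level is $D_1=[0,1]$; this does not affect $S(\alpha)$.)

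Two smaller remarks. You correctly flag that re-invoking \thref{thm:effective_G} requires a name for the updated smooth function $\psi_k=\psi_{k-1}G_k$, and you supply the Leibniz-rule argument for why multiplication is computable in the represented space of smooth functions on $\torus$ — the paper leaves this entirely implicit, so it is good that you addressed it. Your choice $M_0^{(k+1)}:=M_{N_k}^{(k)}+1$ differs from the paper's $10x_0+m$, but both satisfy the only constraint \thref{thm:effective_G} imposes ($M_0>x_0$), so that is harmless. Finally, for the second map you write that it is ``decidable'' whether a rational ball meets $D_n(\alpha)$ or lies in its complement; since $\alpha$ is an arbitrary real, the endpoints of the constituent intervals are not rational and balls tangent to them cannot be classified, so the right statement is that both predicates are c.e.\ — which is all that is needed to output a $\hypCompactV$-name. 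With that wording adjusted, the argument is complete and matches the paper's.
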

\begin{proof}
	Let us prove the computability of the first map. \thref{thm:effective_G} states the existence of a computable map $\Theta\pfunction{\mathbb{R}\times \cont[\infty][]{\torus}\times \mathbb{R}\times \mathbb{N}}{\baire}$ that maps $(\alpha, \psi, \varepsilon, M_0)$ to $\str{M_1,\hdots, M_N}$. 
	
	We recursively define two sequences $\sequence{\sequence{M_{i,m}}{i=1}^{N_m}}{m\ge 1}$ of finite sequences of natural numbers and $\sequence{G_m}{m\in\mathbb{N}}$ of smooth functions on $\torus$ as follows. We start letting $G_0$ be constantly equal to $1$. We then define
	\begin{gather*}
		\str{M_{1,m+1},\hdots, M_{N_{m+1},m+1}} := \Theta\left(\alpha, \prod_{i\le m}G_i, 2^{-m-2}, 10x_0+m\right)\\
		G_{m+1}:= \frac{1}{N_{m+1}}\sum_{i=1}^{N_{m+1}} F_{M_{i,m+1}}
	\end{gather*}
	where $x_0$ is defined as above.
		
	By \cite[Ch.\ 9, p.\ 73]{Wolff03}, a measure $\mu$ witnessing that $E(\alpha)$ is Salem is the weak-* limit of the sequence $\sequence{\mu_k}{k\in\mathbb{N}}$, where each $\mu_k$ is absolutely continuous w.r.t.\ the Lebesgue measure with density $\prod_{m\le k} G_m$. In particular, $\support(\mu_0)=[0,1]$ and $\support(\mu_{k+1})\subset \bigcup_{m=1}^{k+1} \support(G_m)=\bigcup_{m=1}^{k+1} \bigcup_{i=1}^{N_{m}} \support(F_{M_{i,m}})$. Since $\support(F_M)\subset \bigcup_{p\in \mathbf{P}_M} D_p$, we can define $P(\alpha,0):=\{1\}$ and
	\[ P(\alpha, k+1):= \bigcup_{m=1}^{k+1} \bigcup_{i=1}^{N_{m}} \mathbf{P}_{M_{i,m}}. \]
	The computability of this map follows from the computability of $\Theta$.
	\smallskip

	The computability of the other maps is then straightforward. A $\psi$-name (i.e.\ a full information name) for $D_n(\alpha)$ can be uniformly computed from $\alpha$ and $n$ as
	\[ D_n(\alpha)=\{ x\in [0,1] \st \min_{m\in\mathbb{Z}} | nx - m | \le n^{-1-\alpha} \} = \bigcup_{m\le n} \cball{\frac{m}{n}}{n^{-2-\alpha}} \cap [0,1]. \]	
	The computability of the third map follows from the computability of the first two, while the computability of the last map follows from the fact that $\bigcap\function{\infStrings{(\hypCompactUF([0,1]))}}{\hypCompactUF([0,1])}$ is computable (see \thref{thm:computability_union_intersection}).
\end{proof}

In particular, if $\alpha$ is computable then $S(\alpha)\in \lightfacePi^0_1(\mathbb{R})$. Notice however that, in the previous proposition, we only get a $\closedNegRep$-name (i.e.\ a negative representation name) for $S(\alpha)$. Indeed, the map $\bigcap\function{\infStrings{(\hypCompactV([0,1]))}}{\hypCompactV([0,1])}$ is not computable (it is not even continuous).

\section{The effective complexity of closed Salem sets}
\label{sec:eff_complexity_salem}

In this section we characterize the effective complexity of the conditions $\hdim(A)>p$, $\hdim(A)\ge p$, $\fourierdim(A)>p$, $\fourierdim(A)\ge p$ and ``$A$ is Salem'', i.e.\ we state and prove the effective counterparts of the results presented in \cite{MVSalem}.

We start by establishing the upper bounds for the complexity of the sets we are studying. Notice that, since $\lightfaceSigma^0_k(\hypCompactUF(X))\subset \lightfaceSigma^0_k(\hypCompactV(X))$ and $\lightfaceSigma^0_k(\hypClosedUF(X))\subset \lightfaceSigma^0_k(\hypClosedF(X))$, proving the upper bounds using $\hypCompactUF(X)$ or $\hypClosedUF(X)$ yield a stronger result.

\begin{proposition}
	\thlabel{thm:eff_hausdorff_complexity}
	For every $d$ and every compact $K\subset \mathbb{R}^d$,
	\begin{itemize}
		\item $\{ (A,p) \in\hypCompactUF(K)\times [0,d] \st \hdim(A)> p\}$ is $\lightfaceSigma^{0,K}_2$;
		\item $\{ (A,p) \in\hypCompactUF(K)\times [0,d] \st \hdim(A)\ge p\}$ is $\lightfacePi^{0,K}_3$.
	\end{itemize}
\end{proposition}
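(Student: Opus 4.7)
The plan is to use the capacitary characterization of Hausdorff dimension recalled in Section~\ref{sec:background_hausdorr_fourier}: for Borel $A\subset \mathbb{R}^d$,
\[ \hdim(A)=\sup\{ s\in [0,d] \st (\exists \mu \in \ProbabilityMeas(A))(\exists c>0)(\forall x)(\forall r>0)(\mu(\ball{x}{r})\le cr^s)\}. \]
First I would rewrite ``$\hdim(A)>p$'' as the existence of \emph{rational} $s>p$ and $c>0$, and of $\mu\in \ProbabilityMeas(K)$, satisfying (a) $\support(\mu)\subset A$ and (b) $\mu(\ball{x}{q})\le cq^s$ for every $(x,q)\in \mathbb{Q}^d\times \mathbb{Q}^+$. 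The equivalence with the classical formulation uses two elementary reductions: if the original inequality holds for a real $s_0>p$ with constant $c_0$, we pick a rational $s\in(p,s_0)$ and enlarge $c_0$ (using the boundedness of $\diam(K)$ to absorb the case of large radii) to obtain the required rational $c$; conversely, if (b) holds on rational balls, any real pair $(x,r)$ is approximated from the outside by rational $(x',q)$ with $\ball{x}{r}\subset \ball{x'}{q}$ and $q\downarrow r$, yielding $\mu(\ball{x}{r})\le cq^s\to cr^s$.

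Next I would check that, for $s,c$ fixed, conditions (a) and (b) are jointly $\lightfacePi^{0,K}_1$ in $(A,\mu)\in \hypCompactUF(K)\times \ProbabilityMeas(K)$. Indeed, by \thref{thm:computability_measures}(1) the map $(\mu,B)\mapsto \mu(B)\in \leftReal$ is computable relative to $K$, so for each basic open ball $B$ the predicate ``$\mu(B)=0$'' is $\lightfacePi^{0,K}_1$. Condition (a) is then the universal quantification of such predicates over the basic balls appearing in the $\closedNegRep$-name of $A$ (whose union exhausts $K\setdifference A$), and (b) is the universal quantification over the countable set $\mathbb{Q}^d\times \mathbb{Q}^+$ of the $\lightfacePi^{0,K}_1$ predicate $\mu(\ball{x}{q})\le cq^s$ (again by left-c.e.-ness of $\mu\mapsto \mu(\ball{x}{q})$, since the right-hand side is a computable rational). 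By \thref{thm:P(K)_comp_compact}, $\ProbabilityMeas(K)$ is co-c.e.\ compact relative to $K$, so \thref{thm:effective_AM_eff_spaces} projects out $\mu$ while preserving $\lightfacePi^{0,K}_1$. The remaining two existential quantifiers over rational $s>p$ and rational $c>0$ push the complexity up to $\lightfaceSigma^{0,K}_2$, and the countable union also absorbs the $\lightfaceSigma^0_1$ condition ``$s>p$'' relating the rational witness to the real parameter $p$.

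For the second bullet, I would use the equivalence $\hdim(A)\ge p \iff (\forall q\in\mathbb{Q})(q\ge p \lor \hdim(A)>q)$. For each rational $q$, the disjunct ``$q\ge p$'' is $\lightfacePi^{0,K}_1\subset \lightfaceSigma^{0,K}_2$ in $p$, and ``$\hdim(A)>q$'' is $\lightfaceSigma^{0,K}_2$ uniformly in $q$ by the first part, so the disjunction is $\lightfaceSigma^{0,K}_2$. The countable conjunction over $q$ then delivers $\lightfacePi^{0,K}_3$.

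The main obstacle I expect is the rationalization step, specifically verifying that checking (b) only on rational centers and rational radii is sufficient, and that the adjustment of $(s_0,c_0)$ to rational $(s,c)$ is uniformly computable in the relevant parameters. Once this density argument is in place, the rest of the proof is a direct assembly of \thref{thm:computability_measures}, \thref{thm:effective_AM_eff_spaces}, and \thref{thm:P(K)_comp_compact}.
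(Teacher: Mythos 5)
Your proposal is correct and follows essentially the same route as the paper's proof: invoke the capacitary (Frostman) characterization, reduce the Frostman condition to countably many $\lightfacePi^{0,K}_1$ predicates via rational balls and rational $(s,c)$, observe $\ProbabilityMeas(K)$ is computably compact (\thref{thm:P(K)_comp_compact}), project out $\mu$ with the effective uniformization lemma, and then count quantifiers. The only cosmetic difference is in encoding $\mu\in\ProbabilityMeas(A)$: you verify $\mu(B)=0$ for the basic balls in a $\closedNegRep$-name of $A$, whereas the paper checks $\mu(A)\ge 1$ via the right-c.e.\ computability of $\mu$ on closed sets (\thref{thm:computability_measures}(2)); both give the same $\lightfacePi^{0,K}_1$ bound. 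Your worry about ``uniform computability'' of the $(s_0,c_0)\mapsto(s,c)$ adjustment is misplaced: no witness needs to be computed, only the pure existence of rational witnesses, which your own monotonicity argument already supplies.
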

\begin{proof}
	For $A\in\hypCompactUF(K)$ let us define
	\[ D(A):= \{ s\in [0,d] \st (\exists \mu \in \ProbabilityMeas(A))(\exists c>0)(\forall x\in \mathbb{R}^d)(\forall r>0)(\mu(\ball{x}{r})\le c r^s ) \}. \]	
	Notice that, if $a\in \leftReal$ and $b\in \mathbb{R}$ (with the standard Cauchy representation) then the condition $a\le b$ is a $\lightfacePi^0_1$ predicate of $a$ and $b$ (as it is equivalent to $(\forall i)(p_a(i)\le b)$, where $p_a\in \repmap{\leftReal}^{-1}(a)$). Notice also that the map $(x,r)\mapsto \ball{x}{r}$ is computable. By \thref{thm:computability_measures}(1), $\mu(\ball{x}{r})\le c r^s$ is $\lightfacePi^0_1$ as a predicate of $\mu, x, r, c,$ and $s$.
	
	Moreover, $D(A)$ can be equivalently written as
	\[ \{ s\in [0,d] \st (\exists \mu \in \ProbabilityMeas(A))(\exists c>0)(\forall q_0\in \mathbb{Q}^d)(\forall q_1 \in \mathbb{Q}^+)(\mu(\ball{q_0}{q_1})\le c q_1^s ) \}. \]
	Indeed, one inclusion is obvious, while the other follows from the fact that for every $\varepsilon >0$ there are $q_0\in\mathbb{Q}^d$ and $q_1\in \mathbb{Q}$ s.t.\ $\ball{x}{r}\subset \ball{q_0}{q_1}$ and $q_1 < r +\varepsilon$. Hence
	\begin{align*}
		\mu(\ball{x}{r}) & \le \inf \{ \mu(\ball{q_0}{q_1}) \st \ball{x}{r}\subset \ball{q_0}{q_1} \text{ and } q_1 < r +\varepsilon \text{ and } \varepsilon >0 \} \\
		& \le \inf \{ cq_1^s \st \ball{x}{r}\subset \ball{q_0}{q_1} \text{ and } q_1 < r +\varepsilon \text{ and } \varepsilon >0  \}\\
		& \le \inf \{ c(r+\varepsilon)^s \st \varepsilon >0   \} = cr^s.
	\end{align*}
	Since the existential quantification on $c$ can be trivially restricted to the rationals, we have
	\[ S:= \{ (s,\mu)\in [0,d]\times \ProbabilityMeas(A)\st (\exists c>0)(\forall x\in \mathbb{R}^d)(\forall r>0)(\mu(\ball{x}{r})\le c r^s ) \}\in \lightfaceSigma^{0,A}_2~.\]
	Observe that $\mu\in \ProbabilityMeas(A)$ iff $\mu \in \ProbabilityMeas(K)$ and $\mu(A)\ge 1$. In particular, since $\hypCompactUF(K)$ is admissibly represented with the negative information representation, by \thref{thm:computability_measures}(2), given two names for $\mu$ and $A$, we can computably obtain a right-cut representation for $\mu(A)$, hence the the condition $\mu(A)\ge 1$ is a $\lightfacePi^{0}_1$ predicate of $\mu$ and $A$ (as if $x\in \rightReal$ the condition $x\ge 1$ is co-c.e.).
	Since $\ProbabilityMeas(K)$ is computably compact (\thref{thm:P(K)_comp_compact}), using (the relativized version of) \thref{thm:effective_AM}, we have
	\[ D(A)= \proj_{[0,d]} \{ (s,\mu) \in [0,d]\times \ProbabilityMeas(K) \st \mu(A)\ge 1 \land (s,\mu) \in S \} \in \lightfaceSigma^{0,A,K}_2~. \]
	To conclude the proof we notice that the conditions
	\begin{gather*}
		\hdim(A) > p \iff (\exists s\in \mathbb{Q})(s>p \land s\in D(A)),	\\
		\hdim(A) \ge p \iff (\forall s\in \mathbb{Q})(s<p \rightarrow s\in D(A))
	\end{gather*}
	are $\lightfaceSigma^{0,K}_2$ and $\lightfacePi^{0,K}_3$ respectively (as predicates of $A$ and $p$).
\end{proof}

\begin{proposition}
	\thlabel{thm:eff_fourier_complexity}
	For every $d$ and every compact $K\subset \mathbb{R}^d$,
	\begin{itemize}
		\item $\{ (A,p) \in\hypCompactUF(K)\times [0,d] \st \fourierdim(A)> p\}$ is $\lightfaceSigma^{0,K}_2$;
		\item $\{ (A,p) \in\hypCompactUF(K)\times [0,d] \st \fourierdim(A)\ge p\}$ is $\lightfacePi^{0,K}_3$.
	\end{itemize}
\end{proposition}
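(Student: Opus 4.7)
The plan is to mirror the proof of \thref{thm:eff_hausdorff_complexity}, replacing the Frostman-type condition $\mu(\ball{x}{r}) \le c r^s$ by the Fourier-decay condition $|\fouriertransform{\mu}(x)| \le c|x|^{-s/2}$. For $A \in \hypCompactUF(K)$ I would introduce
\[ D_F(A) := \{ s \in [0,d] \st (\exists \mu \in \ProbabilityMeas(A))(\exists c > 0)(\forall x \in \mathbb{R}^d \setminus \{0\})(|\fouriertransform{\mu}(x)| \le c|x|^{-s/2}) \}, \]
so that $\fourierdim(A) = \sup D_F(A)$. Once $D_F(A) \in \lightfaceSigma^{0,A,K}_2$ is established uniformly in $A$, the equivalences
\[ \fourierdim(A) > p \iff (\exists s \in \mathbb{Q})(s > p \land s \in D_F(A)) \]
and
\[ \fourierdim(A) \ge p \iff (\forall s \in \mathbb{Q})(s < p \to s \in D_F(A)) \]
immediately yield the two claimed complexities.

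First I would verify that $(\mu, x) \mapsto \fouriertransform{\mu}(x)$ is computable on $\ProbabilityMeas(K) \times \mathbb{R}^d$: writing the Fourier transform as a complex integral of $\xi \mapsto e^{-i\xi\cdot x}$ and splitting into real and imaginary parts, each piece is the integral of an effectively bounded (by $1$) continuous function of $\xi$ parametrized computably by $x$, so computability follows from \thref{thm:computability_measures}(6). Consequently the predicate $|\fouriertransform{\mu}(x)| \le c|x|^{-s/2}$ is $\lightfacePi^0_1$ in $(\mu, x, c, s)$ on the region $x \neq 0$. Next, a density argument lets me replace the uncountable quantifier $(\forall x \in \mathbb{R}^d \setminus \{0\})$ by $(\forall x \in \mathbb{Q}^d \setminus \{0\})$ and the existential on $c$ by one over $c \in \mathbb{Q}^+$: both $\fouriertransform{\mu}$ and $x \mapsto c|x|^{-s/2}$ are continuous on $\mathbb{R}^d \setminus \{0\}$, so nothing is lost. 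This shows that
\[ S_F := \{ (s, \mu) \in [0,d] \times \ProbabilityMeas(K) \st (\exists c \in \mathbb{Q}^+)(\forall x \in \mathbb{Q}^d \setminus \{0\})(|\fouriertransform{\mu}(x)| \le c|x|^{-s/2}) \} \]
lies in $\lightfaceSigma^0_2$.

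To pass from $S_F$ to $D_F(A)$ I would argue exactly as in the proof of \thref{thm:eff_hausdorff_complexity}: the condition $\mu \in \ProbabilityMeas(A)$ is equivalent to $\mu \in \ProbabilityMeas(K) \land \mu(A) \ge 1$, and the latter is $\lightfacePi^{0,A,K}_1$ in $(\mu, A)$ by \thref{thm:computability_measures}(2), using that $\hypCompactUF(K)$ is represented via negative information (so $\mu(A) \in \rightReal$ uniformly from $\mu$ and $A$). By \thref{thm:P(K)_comp_compact}, $\ProbabilityMeas(K)$ is computably compact, so the relativized version of \thref{thm:effective_AM} allows me to project out the quantifier over $\mu$ while remaining in $\lightfaceSigma^0_2$, yielding $D_F(A) \in \lightfaceSigma^{0,A,K}_2$ uniformly in $A$ as required.

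The main obstacle is essentially bookkeeping: the one genuinely new ingredient compared with the Hausdorff case is the uniform computability of the Fourier transform $(\mu, x) \mapsto \fouriertransform{\mu}(x)$, but this falls out of \thref{thm:computability_measures}(6) once one notes that the kernel $e^{-i\xi\cdot x}$ is an effectively bounded continuous function of $\xi$ computable in $x$. The continuity-based reduction of the universal quantifier to rational $x$ and the projection step through \thref{thm:effective_AM} then proceed without modification.
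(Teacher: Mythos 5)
Your proposal is correct and follows essentially the same route as the paper: split $\fouriertransform{\mu}$ into integrals of the effectively bounded kernels $\cos(\scalarprod{x}{t})$ and $\sin(\scalarprod{x}{t})$ to get computability via \thref{thm:computability_measures}(6), restrict the universal quantifier to rational $x$ by continuity and the existential on $c$ to $\mathbb{Q}^+$, encode $\mu\in\ProbabilityMeas(A)$ as $\mu(A)\ge 1$, and eliminate the quantifier over $\mu$ via \thref{thm:P(K)_comp_compact} and \thref{thm:effective_AM}. Your explicit exclusion of $x=0$ is a sensible clarification that the paper leaves implicit, but it changes nothing substantive.
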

\begin{proof}
	Consider the set
	\[ D(A):=\{s\in[0,d]\st (\exists \mu \in \ProbabilityMeas(A))(\exists c>0)(\forall x\in \mathbb{R}^d)(|\fouriertransform{\mu}(x)|\le c|x|^{-s/2}) \}.\]
	Recall that, by definition,
	\[ \fouriertransform{\mu}(x) = \int e^{-i \scalarprod{x}{t}} \dd{\mu}(t) = \int \cos(\scalarprod{x}{t}) \dd{\mu}(t)-i\int \sin(\scalarprod{x}{t})\dd{\mu}(t)~. \]
	Since $\cos$ and $\sin$ are effectively bounded, by \thref{thm:computability_measures}.6 the map
	\[ \function*{\ProbabilityMeas(\mathbb{R}^d)\times \mathbb{R}}{\mathbb{R}}:=(\mu,x)\mapsto |\fouriertransform{\mu}(x)|\]
	is computable. By the continuity of the Fourier transform, the universal quantification on $x\in\mathbb{R}^d$ can be restricted to $\mathbb{Q}^d$. Since the quantification on $c$ can be trivially restricted to the rationals, we obtain that $D(A)= \proj_{[0,d]} Q$, with
\[
Q:= \{ (\mu,s)\in \ProbabilityMeas(K)\times [0,d] \st (\exists c\in \mathbb{Q}^+) (\forall q\in \mathbb{Q}^d)(\mu\in \ProbabilityMeas(A)\land|\fouriertransform{\mu}(q)|\le c|q|^{-s/2}) \}.
\]
The claim follows as in the proof of \thref{thm:eff_hausdorff_complexity}:
since the condition $\mu\in \ProbabilityMeas(A)$ is a $\lightfacePi^{0,K}_1$
predicate of $\mu$ and $A$ and $\ProbabilityMeas(K)$ is computably compact,
we have that $Q\in\lightfaceSigma^{0,A,K}_2$. Using (the relativized version
of) \thref{thm:effective_AM} we conclude that $D(A)\in
\lightfaceSigma^{0,A,K}_2$, and finally
	\begin{gather*}
		\fourierdim(A) > p \iff (\exists s\in \mathbb{Q})(s>p \land s\in D(A)),	\\
		\fourierdim(A) \ge p \iff (\forall s\in \mathbb{Q})(s<p \rightarrow s\in D(A))
	\end{gather*}
	are, respectively, a $\lightfaceSigma^{0,K}_2$ and a $\lightfacePi^{0,K}_3$ predicate of $A$ and $p$.
\end{proof}

\begin{corollary}
	\thlabel{thm:salem_eff_pi03}
	For every compact $K\subset\mathbb{R}^d$, the set $\{ A \in\hypCompactUF(K)\st A \in \Salem([0,d])\}$ is $\lightfacePi^{0,K}_3$.
\end{corollary}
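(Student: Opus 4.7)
The plan is to express the Salem condition in terms of the two complexity estimates of \thref{thm:eff_hausdorff_complexity} and \thref{thm:eff_fourier_complexity}. Since every compact subset of $\mathbb{R}^d$ is Borel, the general inequality $\fourierdim(A)\le \hdim(A)$ applies on $\hypCompactUF(K)$, so $A\in\Salem([0,d])$ if and only if $\hdim(A)\le \fourierdim(A)$. By density of the rationals this is in turn equivalent to
\[
(\forall p\in\mathbb{Q})\bigl(\hdim(A)\le p \ \lor\ \fourierdim(A)\ge p\bigr),
\]
the nontrivial direction being: if $\fourierdim(A)<\hdim(A)$, then any rational strictly between them falsifies the disjunction.

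Next, I would bound the complexity of each disjunct uniformly in $p$. By \thref{thm:eff_hausdorff_complexity}, the set $\{(A,p)\in\hypCompactUF(K)\times[0,d]\st \hdim(A)>p\}$ is $\lightfaceSigma^{0,K}_2$, so taking complements in the product gives $\{(A,p)\st \hdim(A)\le p\}\in\lightfacePi^{0,K}_2$. By \thref{thm:eff_fourier_complexity}, $\{(A,p)\st \fourierdim(A)\ge p\}$ is $\lightfacePi^{0,K}_3$. Since $\lightfacePi^{0,K}_2\subseteq \lightfacePi^{0,K}_3$ and the latter is closed under finite unions, the disjunction defines a $\lightfacePi^{0,K}_3$ subset of $\hypCompactUF(K)\times[0,d]$.

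Finally, intersecting over an effective enumeration of $\mathbb{Q}$ yields the Salem set. Because $\lightfacePi^{0,K}_3$ is closed under effective countable intersections, we conclude $\{A\in\hypCompactUF(K)\st A\in\Salem([0,d])\}\in \lightfacePi^{0,K}_3$. There is no real obstacle here: the only point to verify is that the bounds from \thref{thm:eff_hausdorff_complexity} and \thref{thm:eff_fourier_complexity} are uniform in the rational parameter $p$, which is transparent from their proofs since $p$ enters only through rational comparisons inside the $\lightfaceSigma^{0,K}_2$/$\lightfacePi^{0,K}_3$ descriptions.
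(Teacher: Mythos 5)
Your proof is correct and takes essentially the same approach as the paper, which likewise uses $\fourierdim \le \hdim$ to recast Salemness as a rational-quantified countable conjunction and then applies \thref{thm:eff_hausdorff_complexity} and \thref{thm:eff_fourier_complexity}. The only cosmetic difference is that the paper phrases the conjunct as $\hdim(A)>r \rightarrow \fourierdim(A)>r$ (so it needs only the two $\lightfaceSigma^{0,K}_2$ bounds), whereas you use $\hdim(A)\le p \lor \fourierdim(A)\ge p$ via the $\lightfacePi^{0,K}_3$ bound on the Fourier side; both yield the same $\lightfacePi^{0,K}_3$ conclusion after intersecting over $\mathbb{Q}$.
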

\begin{proof}
	As in the proof of \cite[Thm.\ 3.4]{MVSalem}, recall that $\fourierdim(A)\le \hdim(A)$ for every Borel $A$, hence $\hdim(A)=\fourierdim(A)$ iff
	\[ (\forall r\in \mathbb{Q})( \hdim(A)>r \rightarrow \fourierdim(A)>r ), \]
	which is a $\lightfacePi^{0,K}_3$ condition by \thref{thm:eff_hausdorff_complexity} and \thref{thm:eff_fourier_complexity}.
\end{proof}

We now show that, if we take $d=1$ and $K=[0,1]$ then the above conditions are lightface complete for their respective classes. To do so, we use \thref{thm:s_alpha_effective} to prove an effective analogue of \cite[Lem.\ 3.6]{MVSalem}. We split the result in two lemmas.

Recall that, for $\alpha\ge 0$, $E(\alpha)$ is the set of $\alpha$-well approximable numbers, and $S(\alpha)$ is a closed Salem subset of $E(\alpha)$ with $\dim(E(\alpha))=\dim(S(\alpha))=2/(2+\alpha)$ (see Section~\ref{sec:eff_kaufman}).
\begin{lemma}
	\thlabel{thm:t_alpha_construction}
	For every rational $\alpha\ge 0$ there is a superset $T(\alpha)$ of $S(\alpha)$ with
	\[ T(\alpha) = \bigcap_{k\in\mathbb{N}} T^{(k)}(\alpha) =  \bigcap_{k\in\mathbb{N}} \bigcup_{j< N_k} J_j(\alpha,k), \]
	where the $J_j(\alpha,k)$ are pairwise disjoint (possibly degenerate) closed intervals, s.t.:
	\begin{itemize}
		\item $T(\alpha)$ is a closed Salem subset of $[0,1]$ with $\dim(T(\alpha))=\dim(S(\alpha))=2/(2+\alpha)$,
		\item the levels $T^{(k)}(\alpha)$ of the construction are s.t.
		\begin{enumerate}
			\item for every $k$, $T^{(k)}(\alpha)=\bigcup_{j< N_k} J_j(\alpha,k)$ and $T^{(k+1)}(\alpha) \subset T^{(k)}(\alpha)$;
			\item for every $k$ and every $i< N_k$ there exists $j< N_{k+1}$ s.t.\ $J_j(\alpha,k+1)\subset J_i(\alpha,k)$,
		\end{enumerate}
		\item the map $\function*{\mathbb{Q}\times \mathbb{N}}{\hypCompactV([0,1])}:=(\alpha,k)\mapsto T^{(k)}(\alpha)$ is computable.	
	\end{itemize}	

\end{lemma}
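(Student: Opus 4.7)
The plan is to enlarge $S(\alpha)$ by a countable set of "safety" singletons, added at each level of the construction, so that every interval in the decomposition at level $k$ is guaranteed to contain at least one interval at level $k+1$. The countable enlargement preserves both Hausdorff and Fourier dimension, while the safety mechanism gives us the nested tree of intervals demanded by conditions (1) and (2). The rationality of $\alpha$ will play a crucial role only at the end, to secure full-information computability in $\hypCompactV([0,1])$.

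First I would fix notation. By \thref{thm:s_alpha_effective}, the map $A_k(\alpha) := \bigcup_{n\in P(\alpha,k)}D_n(\alpha)$ is computable into $\hypCompactV([0,1])$ as a finite union of closed intervals whose endpoints are of the form $m/n\pm n^{-2-\alpha}$, hence algebraic whenever $\alpha\in\mathbb{Q}$; moreover $S(\alpha)=\bigcap_k A_k(\alpha)$. Set $T^{(0)}(\alpha):=[0,1]$, $N_0:=1$, $J_0(\alpha,0):=[0,1]$. Inductively, assuming $T^{(k)}(\alpha)=\bigsqcup_{i<N_k}J_i(\alpha,k)$ is written as a disjoint union of closed intervals with algebraic endpoints, define
\[
T^{(k+1)}(\alpha) := \bigl(T^{(k)}(\alpha)\cap A_{k+1}(\alpha)\bigr)\cup\{\ell_i(\alpha,k):i<N_k\},
\]
where $\ell_i(\alpha,k)$ is the left endpoint of $J_i(\alpha,k)$ (with $\ell_i=J_i$ when $J_i$ is degenerate). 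Take $\{J_j(\alpha,k+1)\}_{j<N_{k+1}}$ to be the connected components of $T^{(k+1)}(\alpha)$; they are pairwise disjoint closed intervals by construction.

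Next I would verify the stated properties. Condition (1) is immediate: both $T^{(k)}\cap A_{k+1}$ and the $\ell_i$'s lie inside $T^{(k)}(\alpha)$. For condition (2), fix $i<N_k$: the safety point $\ell_i(\alpha,k)\in T^{(k+1)}(\alpha)\cap J_i(\alpha,k)$, so the component of $T^{(k+1)}(\alpha)$ containing $\ell_i$ is a connected subset of $T^{(k)}(\alpha)$ meeting $J_i(\alpha,k)$, hence is contained in $J_i(\alpha,k)$. An easy induction gives $S(\alpha)\subseteq T^{(k)}(\alpha)$ for every $k$, and unfolding the recursion shows $T(\alpha)=S(\alpha)\cup Z$ where $Z$ is the countable union across levels of the finitely many safety points introduced at each stage. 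Countable stability of $\hdim$ gives $\hdim(T(\alpha))=\max(\hdim(S(\alpha)),\hdim(Z))=2/(2+\alpha)$; and since every probability measure on $S(\alpha)$ is a probability measure on the superset $T(\alpha)$, we get $\fourierdim(T(\alpha))\ge \fourierdim(S(\alpha))=2/(2+\alpha)$, and combined with $\fourierdim\le\hdim$ this yields the Salem property with the stated dimension.

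The hard part is the computability of $(\alpha,k)\mapsto T^{(k)}(\alpha)$ into $\hypCompactV([0,1])$, since this target space carries the full-information Cauchy representation and intersection of compact sets is in general not even continuous under that representation. Here is where rationality of $\alpha$ enters: all endpoints appearing in $A_k(\alpha)$ and $T^{(k)}(\alpha)$ are algebraic numbers, and equality and order on the algebraic reals are decidable. Thus at each inductive step we can compute explicitly the finitely many intervals appearing in $T^{(k)}(\alpha)\cap A_{k+1}(\alpha)$, merge overlapping ones, incorporate the safety singletons, and extract the list of pairwise disjoint components $J_j(\alpha,k+1)$ together with their algebraic endpoints. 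From this presentation a Cauchy $\hmetric$-name of $T^{(k)}(\alpha)$ is obtained by taking finite $\varepsilon$-nets on each component via rational approximations of the endpoints, yielding the required computable map into $\hypCompactV([0,1])$.
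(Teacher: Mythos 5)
Your proposal is correct and follows essentially the same strategy as the paper's proof: recursively enrich the levels of $S(\alpha)$ with left-endpoint ``safety'' singletons to force nesting of the interval tree, observe that $T(\alpha)\setminus S(\alpha)$ is countable so the dimensions (and the Salem property) are preserved, and obtain full-information computability by noting that, for rational $\alpha$, all endpoints live in a decidably ordered subfield of the reals (the paper isolates the explicit set $\mathcal{E}=\{s+r^{n/m}\}$ and invokes Tarski's decidability of real closed fields, where you appeal to decidability of the ordered field of real algebraic numbers — the same idea). The only cosmetic difference is that you add a safety point for \emph{every} interval at each stage whereas the paper adds one only for the intervals that would otherwise become empty; both work.
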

\begin{proof}
	We let $S^{(k)}(\alpha):=\bigcup_{n\in P(\alpha,k)} D_n(\alpha)$, so that $S(\alpha) = \bigcap_{k\in\mathbb{N}} S^{(k)}(\alpha)$. The set $S^{(k)}(\alpha)$ can be rewritten as
	\[ S^{(k)}(\alpha) = \bigcup_{i< M_k} I_i(\alpha,k) \]
	where, for each $k$, the intervals $I_i(\alpha,k)$ are closed, non-degenerate, and pairwise disjoint.

	We define $T^{(k)}(\alpha)$ recursively on $k$ as follows: at stage $0$ we let $T^{(0)}(\alpha):=S^{(0)}(\alpha)$. At stage $k+1$, let $\sequence{\tilde J_j(\alpha,k)}{j< \tilde N_{k+1} }$ be a finite sequence of closed mutually disjoint intervals s.t.\
	\[ \bigcup_{j<\tilde N_{k+1}} \tilde J_j(\alpha,k) = T^{(k)}(\alpha) \cap S^{(k+1)}(\alpha).\tag{$\star$}  \]
	For the sake of readability, let $\tilde T^{(k+1)}(\alpha):= \bigcup_{j<\tilde N_{k+1}} \tilde J_j(\alpha,k)$. Let also $W_k:=\{ i < N_k \st \tilde T^{(k+1)}(\alpha) \cap J_i(\alpha,k) = \emptyset \}$. We define
	\[ T^{(k+1)}(\alpha):=  \tilde T^{(k+1)}(\alpha) \cup \bigcup_{i\in W_k} \{ a_i \}, \]
	where $a_i$ is the left endpoint of $J_i(\alpha,k)$. We then have $N_{k+1} = \tilde N_{k+1} + |W_k|$.

	Clearly each $T^{(k)}(\alpha)$ is a finite union of closed intervals, hence $T(\alpha)$ is closed. Moreover, for every stage $k$, $T^{(k)}(\alpha)\setminus S^{(k)}(\alpha)$ is finite, and therefore $T(\alpha)\setminus S(\alpha)$ is countable. This implies that $\hdim(T(\alpha))=\fourierdim(T(\alpha))=\dim(S(\alpha))=2/(2+\alpha)$. It is straightforward to see that the levels $T^{(k)}(\alpha)$ of the construction of $T(\alpha)$ satisfy the conditions $(1)$ and $(2)$ in the statement of the lemma.

	Let us now prove the computability of the map $(\alpha,k)\mapsto T^{(k)}(\alpha)$. We define
	\[ \mathcal{E}:=\{ s+r^{n/m} \in [0,1] \st s,r\in\mathbb{Q} \text{ and } n,m\in\mathbb{N} \text{ with }m\neq 0\}. \]
	The set $\mathcal{E}$ can be naturally represented via the map $\repmap{\mathcal{E}}\pfunction{\Baire}{\mathcal{E}}:=p\mapsto q_{p(0)}+(q_{p(1)})^{p(2)/p(3)}$, where $\sequence{q_i}{i\in\mathbb{N}}$ is the canonical enumeration of $\mathbb{Q}^+$.

We notice that $\le$ (and hence $=$) are decidable when restricted to
$\mathcal{E}\times\mathcal{E}$. This follows from the decidability of the
theory of real closed fields \cite{TarskiRCF}: notice that $\mathcal{E}$ is
definable in the first-order language
$\mathcal{L}_{\mathrm{rcf}}=(+,\cdot,\le,0,1)$ of real closed fields. Indeed,
the rational number $a/b$ can be defined as the unique $y$ that satisfies
$(1+\hdots+1)\cdot y = 1+\hdots+1$, where the first sum involves $b$ $1$s,
and the second one involves $a$ $1$s. Moreover, for every $s,r\in \mathbb{Q}$
and $n,m\in\mathbb{N}$, the formula $x=s+r^{n/m}$ can be written as
$(x-s)^m=r^n$, and therefore it is expressible in
$\mathcal{L}_{\mathrm{rcf}}$. In particular, this shows that $\le$ and $=$
are decidable for elements of $\mathcal{E}$.

	Since, by definition, 	
	\[ D_n(\alpha)= \bigcup_{m\le n} \cball{\frac{m}{n}}{n^{-2-\alpha}} \cap
[0,1], \] the endpoints of the intervals $I_i(\alpha,k)$ are of the form
$s+r^{2+\alpha}$, for some $s,r\in\mathbb{Q}$. In particular, if
$\alpha\in\mathbb{Q}$ then the endpoints of the $k$-th level intervals of
$S(\alpha)$ are in $\mathcal{E}$. The uniform computability of the finite set
$P(\alpha,k)$ in the definition of $S^{(k)}(\alpha)$
(\thref{thm:s_alpha_effective}) implies that, for each $k$, we can think of
$S^{(k)}(\alpha)$ as being represented via the sequence
$\sequence{(a_{i,k},b_{i,k})}{i< M_k}$ in $\mathcal{E}\times\mathcal{E}$,
where $I_i(\alpha,k)=[a_{i,k},b_{i,k}]$.

	We now show that, for each $k$, the endpoints of the $k$-th level intervals of $T(\alpha)$ are in $\mathcal{E}$, and that $T^{(k)}(\alpha)$ can be uniformly represented as the sequence in $\mathcal{E}\times \mathcal{E}$ of the endpoints of the intervals $\sequence{J_j(\alpha,k)}{j<N_k}$. We proceed by induction on $k$. At stage $0$ the statement is trivial. Assume the claim holds for $T^{(k)}(\alpha)$. The decidability of $\le\restrict{\mathcal{E}\times\mathcal{E}}$ implies that given two finite sequences $\sequence{U_n}{n}$ and $\sequence{V_m}{m}$ of intervals with endpoints in $\mathcal{E}$ (where each interval is represented via a pair of $\repmap{\mathcal{E}}$-names for its endpoints), we can uniformly compute a finite sequence $\sequence{W_\ell}{\ell}$ of mutually disjoint intervals with endpoints in $\mathcal{E}$ s.t.
	\[ \bigcup_\ell W_\ell = \bigcup_n U_n \cap \bigcup_m V_m . \]
	In particular, this implies that a sequence $\sequence{\tilde J_j(\alpha,k)}{j< \tilde N_{k+1} }$ that satisfies $(\star)$ can be uniformly computed from $\alpha$ and $k$.

	Similarly, for every $i<N_k$ and every $j< \tilde N_{k+1}$, we can uniformly (in $\alpha$ and $k$) decide whether $\tilde J_j(\alpha, k+1) \cap J_i(\alpha,k)=\emptyset$. In other words, the set $W_k$ is uniformly computable, and hence so is $|W_k|$.

	It is now straightforward to see that, given $\alpha$ and $k$, we can uniformly compute a finite sequence $\sequence{(c_{j,k},d_{i,k})}{j< N_{k+1}}$ in $\mathcal{E}\times\mathcal{E}$ s.t.\ the $k$-th level intervals of $T(\alpha)$ are $J_j(\alpha,k)=[c_{j,k},d_{j,k}]$. This, in turn, implies the computability of the map $(\alpha,k)\mapsto T^{(k)}(\alpha)$.
\end{proof}

\begin{lemma}
	\thlabel{thm:sigma02_map_salem_eff}
	There is a computable function $f\function{[0,1]_<\times \Cantor}{\hypCompactV([0,1])}$ s.t.\ for every $p,x$, $f(p,x)$ is Salem and
	\[ \dim(f(p,x)) = \begin{cases}
		p & \text{if } x\in Q_2\\
		0 & \text{if } x\notin Q_2
	\end{cases}\]
\end{lemma}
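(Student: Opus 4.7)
The idea is to construct $f(p,x)$ as $\{0\}$ together with a disjoint sequence of ``blocks'' $G_t(x)$ placed inside shrinking intervals $I_t\downarrow\{0\}$, where the bits of $x$ determine whether each block is a scaled copy of the Kaufman Salem set $T(\alpha_t)$ from Lemma \thref{thm:t_alpha_construction} or merely a finite set of points at some level of that construction. Since $p\in[0,1]_<$ is left-c.e., first extract a computable non-decreasing sequence of rationals $p_0\le p_1\le\cdots$ with $\sup_t p_t=p$; when $p_t>0$ set $\alpha_t:=2/p_t-2\in\mathbb{Q}_{\ge 0}$ (so that $T(\alpha_t)$ has Hausdorff and Fourier dimension $p_t$), and when $p_t=0$ set $G_t(x):=\{J_t(0)\}$. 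Fix disjoint closed intervals $I_t\subset[0,1]$ with $|I_t|\to 0$ accumulating only at $0$, and let $J_t\colon[0,1]\to I_t$ be the affine bijection.

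For each $t$ with $p_t>0$, let $j_t(x)\in\mathbb{N}\cup\{\infty\}$ be the least $j\ge t$ with $x(j)=1$, and set
\[
G_t(x):=\begin{cases} J_t(T(\alpha_t)) & \text{if } j_t(x)=\infty, \\ J_t\bigl(\{a_i^{j_t(x)-t}(\alpha_t):i<N_{j_t(x)-t}\}\bigr) & \text{if } j_t(x)<\infty, \end{cases}
\]
i.e., the $J_t$-image of the left endpoints of the level-$(j_t(x)-t)$ intervals of $T(\alpha_t)$. Let $f(p,x):=\{0\}\cup\bigcup_t G_t(x)$, which is compact since $G_t\subset I_t$ and $I_t$ accumulates only at $0$. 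If $x\in Q_2$, then $j_t(x)=\infty$ for every $t$ past the last position of a $1$ in $x$, so $G_t(x)=J_t(T(\alpha_t))$ is Salem of dimension $p_t$ for all such $t$; by countable stability of $\fourierdim$ for closed sets and of $\hdim$, together with the similarity-invariance of $\fourierdim$, we get $\fourierdim(f(p,x))=\hdim(f(p,x))=\sup_t p_t=p$, making $f(p,x)$ Salem of dimension $p$. If $x\notin Q_2$, then $j_t(x)<\infty$ for every $t$, every $G_t(x)$ is finite, so $f(p,x)$ is countable, of dimension $0$, and hence Salem.

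To compute $f$ as a map into $\hypCompactV([0,1])$, at stage $s$ output a finite set of rationals $F_s:=\{0\}\cup\bigcup_{t\le s}\hat G_t^s$, where $\hat G_t^s$ is determined as follows. If some $j\in[t,s]$ has $x(j)=1$, $G_t(x)$ is already settled and we set $\hat G_t^s:=G_t(x)$ (computable exactly via Lemma \thref{thm:t_alpha_construction}). Otherwise set $\hat G_t^s:=J_t(\{a_i^{s+1-t}(\alpha_t):i<N_{s+1-t}\})$: any future value of $G_t(x)$, whether $J_t(T(\alpha_t))$ or $J_t(\{a_i^{j-t}(\alpha_t)\})$ for some $j\ge s+1$, is contained in $J_t(T^{(s+1-t)}(\alpha_t))$ and meets every level-$(s+1-t)$ interval (using the nesting property from Lemma \thref{thm:t_alpha_construction} that each level-$k$ interval contains a subinterval of $T^{(k+1)}$, hence a point of $T(\alpha_t)$ by compactness). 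This yields
\[
d_H(\hat G_t^s,G_t(x))\le |I_t|\,\delta(s+1-t,\alpha_t),
\]
where $\delta(k,\alpha)$ is the computable maximum diameter of the level-$k$ intervals of $T(\alpha)$. Combining with the tail bound $\bigcup_{t>s}G_t(x)\subset[0,2^{-s}]$ gives a computable error $\varepsilon(s):=\max_{t\le s}|I_t|\delta(s+1-t,\alpha_t)+2^{-s}\to 0$, and passing to a computable subsequence with $\varepsilon(s(k))\le 2^{-k}$ delivers a Cauchy name for $f(p,x)$ in $\hypCompactV([0,1])$.

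The main technical difficulty is the undetermined case in the approximation scheme: $\hat G_t^s$ must be simultaneously close to every possible future value of $G_t(x)$, ranging from a small set of coarsely placed points (if $x$ has a $1$ just after stage $s$) to the full Salem set $J_t(T(\alpha_t))$ (if $x$ is eventually $0$ past $t$). The choice of level $s+1-t$ is the deepest level guaranteed to nest every future continuation, and the nested structure of the $T^{(k)}(\alpha_t)$ from Lemma \thref{thm:t_alpha_construction} supplies the two-sided Hausdorff control needed to make this uniform.
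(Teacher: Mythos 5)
Your proof is correct and takes a genuinely different route from the paper's, though both ultimately rest on Lemma~\thref{thm:t_alpha_construction}. The paper first defines an intermediate map $g(q,x)$ for a single rational $q$: inside $[0,1]$, at step $k+1$ it either explicitly shrinks the level intervals (when $x(k+1)=1$, with radius $\rho$ chosen so that $\sum_i \diam{H^{(k)}_i}^{2^{-k}}\le 2^{-k}$) or continues the $T(\alpha)$-construction (when $x(k+1)=0$), so that the result has dimension $q$ if $x\in Q_2$ and $0$ otherwise; it then sets $f(p,x)=\{0\}\cup\bigcup_n\tau_n g(q_n,x)$. You instead make a binary decision per block: $G_t(x)$ is a full scaled $T(\alpha_t)$ if $x(j)=0$ for all $j\ge t$, else a finite set of endpoints whose level is pinned by the first $1$ past position $t$. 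This cleanly avoids the paper's explicit shrinking step: when $x\notin Q_2$ every block is finite, so the dimension-$0$ case is automatic from countability. The computability arguments also diverge: the paper produces $\closedNegRep$- and $\closedPosRep$-names separately (the latter by enumerating basic balls that swallow some level interval), while you produce a Cauchy name directly with an explicit error bound. Both, however, silently use that the level diameters $\delta(k,\alpha)$ of $T(\alpha)$ tend to $0$ --- the paper for the $\closedPosRep$-name correctness (``if $V\cap F_x\neq\emptyset$ then $V$ contains some level interval''), you for $\varepsilon(s)\to 0$. This is \emph{not} stated in Lemma~\thref{thm:t_alpha_construction}; it follows from the nesting conditions together with $\hdim(T(\alpha))<1$ (no nested chain of level intervals can fail to shrink to a point, and then a K\"onig's-lemma argument over the finitely-branching tree of level intervals upgrades this to $\max_j\diam J_j(\alpha,k)\to 0$). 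You should make that step explicit, and for the same reason you should take $p_t<1$ (so $\alpha_t>0$), which is easily arranged for any $p\in[0,1]_<$. One further small point: the statement of Lemma~\thref{thm:t_alpha_construction} only gives a $\hypCompactV$-name for $T^{(k)}(\alpha)$, while you read off the individual endpoints $a^k_i(\alpha)$; this is justified by the \emph{proof} of that lemma (the endpoints lie in $\mathcal{E}$ and are uniformly computable), but is slightly more than the statement literally provides. With these minor gaps filled, the argument is sound and arguably a bit more economical than the paper's.
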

\begin{proof}
	The proof of the lemma follows a similar strategy as the proof of \cite[Lem.\ 3.6]{MVSalem}. Let $T(\alpha)$ and $T^{(k)}(\alpha)$ be as in \thref{thm:t_alpha_construction}.
	For every interval $I=[a,b]$ and every $k$ let $T^{(k)}(\alpha, I)$ be the set obtained by scaling $T^{(k)}(\alpha)$ to the interval $I$. Notice that the mapping $x\mapsto a+(b-a)x$ computably sends $[0,1]$ onto $I$, it is affine and it is invertible if $I$ is non-degenerate. In particular, the partial map
	\[ \function*{\mathbb{N}\times \mathbb{Q}\times \hypCompactV([0,1])}{\hypCompactV([0,1])}:=(k,\alpha,I)\mapsto T^{(k)}(\alpha,I)\]
	is computable.

We first define a map $g\pfunction{\mathbb{Q}\times
\Cantor}{\hypCompactV([0,1])}$ s.t.\ for every $q\in [0,1)$ and
$x\in\Cantor$, $g(q,x)$ is Salem and $\dim(g(q,x))=q$ if $x\in Q_2$ and $0$
otherwise. If $q=0$ we just take $g(q,x):=\emptyset$. Assume $q\in (0,1)\cap
\mathbb{Q}$ and let $\alpha=2(1-q)/q$ so that $2/(2+\alpha)=q$. We define
$F^{(k)}_x$ recursively as
	\begin{description}
		\item[Stage $k=0$]: $F^{(0)}_x := [0,1]$;
		\item[Stage $k+1$]: Let $J^{(k)}_0,\hdots,J^{(k)}_{M_k}$ be the disjoint closed intervals s.t.\ $F^{(k)}_x=\bigcup_{i\le M_k} J^{(k)}_i$.

If $x(k+1)=1$ then let $\rho \in \mathbb{Q}$ be such that $(2
\rho)^{2^{-k}} (M_k+1) \le 2^{-k}$. For each $i\le M_k$ let $J^{(k)}_i=[a_i,b_i]$ and define
\[ H^{(k)}_i:=\cball{\frac{a_i+b_i}{2}}{\min\left\{ \frac{b_i-a_i}{2}, \rho\right\}}.\]
The choice of $\rho$ implies that 		
\[ \sum_{i\le M_k} \diam{H^{(k)}_i}^{2^{-k}} \le 2^{-k}. \]
Define then $F^{(k+1)}_x:= \bigcup_{i\le M_k} H^{(k)}_i$.

		If $x(k+1)=0$ then let $s\le k$ be largest s.t.\ $x(s)=1$ (or $s=0$ if there is none). For each $i\le M_s$, apply the $(k+1-s)$-th step of the construction of $T(\alpha, J^{(s)}_i)$. Define $F^{(k+1)}_x:=\bigcup_{i\le M_s} T^{(k+1-s)}(\alpha, J^{(s)}_i)$.
	\end{description}
	We then define $g(q,x):= F_x := \bigcap_{k\in\mathbb{N}} F^{(k)}_x$. Clearly $F_x$ is closed, as intersection of closed sets.
	
	Let us show that it is Salem with the prescribed dimension. Assume first that $x\in Q_2$ (i.e.\ $x$ is eventually $0$) and let $s$ be the largest index s.t.\ $x(s)=1$ (or $s=0$ if there is none). By construction $F_x=\bigcup_{i\le M_s} T(\alpha,J^{(s)}_i)$. Since $\dim(T(\alpha,J^{(s)}_i))=q$ and each $T(\alpha,J^{(s)}_i)$ is closed, we have that $\dim(F_x)=\max\{\dim(T(\alpha,J^{(s)}_i)) \st i\le M_s\}=q$. On the other hand, if $x\notin Q_2$ then we show that for each $s>0$ and each $\varepsilon>0$ there is a cover $\sequence{A_n}{n\in\mathbb{N}}$ of $F_x$ s.t.\ $\sum_{n\in\mathbb{N}}\diam{A_n}^s \le \varepsilon$, which implies that $\hmeas^s(F_x)=0$. For every $s$ and $\varepsilon$ we can pick $k$ a sufficiently large $k$ so that $2^{-k}\le \min\{s,\varepsilon\}$ and $x(k+1)=1$. The intervals $\sequence{H^{(k)}_i}{i\le M_k}$ defined in the construction of $F_x$ form a cover of $F_x$ s.t.\
	\[ \sum_{i\le M_k} \diam{H^{(k)}_i}^s \le \sum_{i\le M_k} \diam{H^{(k)}_i}^{2^{-k}} \le 2^{-k} \le \varepsilon. \]
	
	We now show that $g$ is computable, i.e.\ that a full information name for $F_x$ can be uniformly computed from $q$ and $x$. Notice that, since the map $(k,\alpha,I)\mapsto T^{(k)}(\alpha,I)$ is computable, then so is the map $(k,p,x)\mapsto F^{(k)}_x$ (where the codomain is represented with the full information representation). Hence, a $\closedNegRep$-name for $F_x$ can be computed from a sequence $\sequence{r_k}{k\in\mathbb{N}}$ where $r_k$ is a $\closedNegRep$-name for $F^{(k)}_x$ (\thref{thm:computability_union_intersection}). To compute a $\psi_+$-name for $F_x$ (i.e.\ a positive information name), we use the fact that no interval is ever entirely removed and that no interval is entirely contained in $F_x$ (as $\dim(F_x)<1$). In particular, a $\psi_+$-name for $F_x$ is obtained by listing all the basic open balls $U$ s.t.\ there are $k$ and $i\le M_k$ s.t.\ $U$ contains a $k$-th level interval $J^{(k)}_i$. Notice that, since no interval is entirely removed, $ J^{(k)}_i\subset U$ implies $U\cap F_x\neq \emptyset$. Moreover, if $V\cap F_x\neq\emptyset$ for some basic open ball $V$, then for some $k$ and $i\le M_k$, $V$ contains the $k$-th level interval $J^{(k)}_i$.
	
We now define a map $f$ that satisfies the statement of the lemma. Let
$\sequence{I_n}{n\in\mathbb{N}}$ be the sequence of disjoint intervals $I_n:=
[2^{-2n-1}, 2^{-2n }]$ and $\sequence{\tau_n}{n\in\mathbb{N}}$ a uniformly
computable sequence of similarity transformations $\tau_n\function{[0,1]}{I_n}$. Any
$p\in [0,1]_<$ is given as a sequence $\sequence{q_n}{n\in\mathbb{N}}$ of
rationals in $[0,1]$ which is monotonically increasing and converges to $p$.
We define
\[
f(p,x):=  \{0\} \cup \bigcup_{n\in\mathbb{N}} \tau_n g(q_n,x).
\]
The fact that $f(p,x)$ is Salem and has the prescribed dimension follows from
the properties of $g$ and the countable stability for closed sets of $\hdim$
and $\fourierdim$. 	Notice that a $\repmap{\hypCompactV([0,1])}$-name for
$f(p,x)$ can be obtained by carefully merging the
$\repmap{\hypCompactV([0,1])}$-names of the sets $\tau_n g(q_n,x)$. We can
briefly sketch the argument as follows: a basic open set intersects $f(p,x)$
iff it intersects $\tau_n g(q_n,x)$ for some $n$. On the other hand, to list
the basic open balls contained in the complement of $f(p,x)$ it suffices to
list all the basic open balls contained in the relative topology of $I_n
\setdifference \tau_n g(q_n,x)$ together with the open intervals
$(2^{-2n-2},2^{-2n-1})$. The claim follows from the fact that the intervals
$I_n$ are uniformly co-c.e.\ closed.
\end{proof}

\medskip

The following results are the effective counterparts of \cite[Prop.\ 3.7, Thm.\ 3.8 and Thm.\ 3.9]{MVSalem}.

\begin{theorem}
	For every $p<1$ the sets
	\begin{align*}
		& \{ A \in\hypCompactV([0,1]) \st \hdim(A)> p\},\\
		& \{ A \in\hypCompactV([0,1]) \st \fourierdim(A)> p\}
	\end{align*}
	are $\lightfaceSigma^0_2$-complete.
\end{theorem}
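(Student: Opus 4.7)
The two claims can be handled in parallel because the reduction witnessing hardness will land in the class of Salem sets, where Hausdorff and Fourier dimension coincide.

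\emph{Upper bound.} I would simply invoke \thref{thm:eff_hausdorff_complexity} and \thref{thm:eff_fourier_complexity} with $K = [0,1]$. Since $[0,1]$ is a computable compact set (it has a trivial $\closedNegRep$-name), the oracle $K$ is computable and the relativized classes collapse to $\lightfaceSigma^0_2$. This gives membership in $\lightfaceSigma^0_2$ for both sets; note moreover that the upper bound was phrased on $\hypCompactUF$, and the identity on underlying sets is computable $\hypCompactV([0,1]) \to \hypCompactUF([0,1])$, so membership transfers.

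\emph{Hardness.} The heart is a computable many-one reduction from the $\lightfaceSigma^0_2$-complete set $Q_2 = \{ x \in \Cantor \st (\forall^\infty m)(x(m) = 0)\}$. Fix $p < 1$; without loss of generality $p \ge 0$ (if $p < 0$ the target set is the whole space, which is trivially hard via any constant computable map to a point). Pick any rational $q$ with $p < q < 1$. Define
\[
h \function{\Cantor}{\hypCompactV([0,1])} := x \mapsto f(q,x),
\]
where $f$ is the computable function supplied by \thref{thm:sigma02_map_salem_eff}. Since $q$ is rational and $f$ is computable on $[0,1]_< \times \Cantor$, the map $h$ is computable, hence a recursive functional in the sense of effective Wadge reducibility.

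\emph{Correctness.} I would then observe the dichotomy provided by \thref{thm:sigma02_map_salem_eff}: if $x \in Q_2$ then $h(x)$ is Salem with $\hdim(h(x)) = \fourierdim(h(x)) = q > p$, so $h(x)$ lies in both target sets; if $x \notin Q_2$ then $\hdim(h(x)) = \fourierdim(h(x)) = 0 \le p$, so $h(x)$ lies in neither. Hence
\[
x \in Q_2 \iff \hdim(h(x)) > p \iff \fourierdim(h(x)) > p,
\]
showing $Q_2 \le_m \{A \st \hdim(A) > p\}$ and $Q_2 \le_m \{A \st \fourierdim(A) > p\}$ simultaneously. Combined with the upper bound, both sets are $\lightfaceSigma^0_2$-complete. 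No step looks genuinely obstructive: the technical content was already absorbed into \thref{thm:s_alpha_effective} and \thref{thm:sigma02_map_salem_eff}, and the only point that needs mild care is restricting to $p \ge 0$, which is handled by a sentence.
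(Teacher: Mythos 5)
Your proof takes essentially the same route as the paper: upper bounds from \thref{thm:eff_hausdorff_complexity} and \thref{thm:eff_fourier_complexity} with $K=[0,1]$ computably compact, and hardness by reducing $Q_2$ through the map $f$ of \thref{thm:sigma02_map_salem_eff} at a fixed rational $q\in(p,1)$, exactly as the paper intends when it says the hardness is ``a corollary of'' that lemma. One small slip in your aside about $p<0$: if the target set were all of $\hypCompactV([0,1])$, it would be $\lightfaceDelta^0_1$ and hence \emph{not} $\lightfaceSigma^0_2$-complete, and a constant map certainly does not witness hardness of a proper subset like $Q_2$ --- a constant map sends every $x$ (not just members of $Q_2$) into the target. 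The cleaner reading is that the theorem tacitly intends $p\in[0,1)$, consistently with \thref{thm:sigma02_map_salem_eff} taking $p\in[0,1]_<$; you should simply restrict to that range rather than try to argue the negative-$p$ case separately.
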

\begin{proof}
	The upper bounds have been shown in \thref{thm:eff_hausdorff_complexity} and \thref{thm:eff_fourier_complexity}. The hardness is a corollary of \thref{thm:sigma02_map_salem_eff}.
\end{proof}
	
Recall that $P_3$ is the $\lightfacePi^0_3$-complete subset of $2^{\mathbb{N}\times\mathbb{N}}$ defined as
\[ P_3 := \{ x \in 2^{\mathbb{N}\times\mathbb{N}} \st (\forall m)(\forall^\infty n)( x(m,n) = 0 ) \}. \]
The proof of the following theorem is similar to the proof of \cite[Thm.\ 3.8]{MVSalem}, using \thref{thm:eff_hausdorff_complexity}, \thref{thm:eff_fourier_complexity} and \thref{thm:sigma02_map_salem_eff} in place of, respectively, \cite[Prop.\ 3.2, Prop.\ 3.3 and Lem.\ 3.6]{MVSalem}.

\begin{theorem}
	\thlabel{thm:dim_>=p_pi03_complete_eff}
	There exists a computable map $F\function{(0,1]_<\times 2^{\mathbb{N}\times\mathbb{N}}}{\hypCompactV([0,1])}$ s.t.\ for every $p$ and $x$, $F(p,x)$ is a Salem set and $\dim(F(p,x))\ge p$ iff $x\in P_3$. For every computable $q\in (0,1]$, letting
	\begin{align*}
		& X_1:= \{ A \in\hypCompactV([0,1]) \st \hdim(A)\ge q\},\\
		& X_2:= \{ A \in\hypCompactV([0,1]) \st \fourierdim(A)\ge q\}
	\end{align*}
	we have that every set $X$ s.t.\ $X_2\subset X \subset X_1$ is $\lightfacePi^0_3$-hard. In particular, $X_1$ and $X_2$ are $\lightfacePi^0_3$-complete.
\end{theorem}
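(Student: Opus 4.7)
The strategy is to layer \thref{thm:sigma02_map_salem_eff} (which handles a single $\Sigma^0_2$ test) on top of a countable effective decomposition of the $\lightfacePi^0_3$ condition ``$x\in P_3$''. Writing $x_k:=(x(k,n))_{n\in\mathbb{N}}\in\Cantor$ for the $k$-th column of $x$, the reformulation I will use is: $x\in P_3$ iff for every $m\in\mathbb{N}$ all of $x_0,\dots,x_m$ lie in $Q_2$. The first step is to encode this finite conjunction of $\Sigma^0_2$ statements as a single $Q_2$-membership question, uniformly and computably in $m$ and $x$. I would define
\[
y_m(j):=\begin{cases} 1 & \text{if } (\exists k\le m)(\exists n)(j/2 \le n \le j \,\wedge\, x(k,n)=1),\\ 0 & \text{otherwise.}\end{cases}
\]
The predicate inside is a bounded search, so $(m,x)\mapsto y_m$ is computable as a map into $\Cantor$. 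If every $x_k$ with $k\le m$ is eventually $0$, then for $j$ large enough the window $[j/2,j]$ lies past all of their last $1$'s, forcing $y_m(j)=0$, hence $y_m\in Q_2$. Conversely, if some $x_{k_0}$ with $k_0\le m$ has $1$'s at infinitely many $n$, then each such $n$ also lies in $[n/2,n]$, giving $y_m(n)=1$ infinitely often and $y_m\notin Q_2$.

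With this encoding in hand, I fix pairwise disjoint intervals $I_m:=[2^{-2m-1},2^{-2m}]$ together with uniformly computable similarities $\tau_m\colon[0,1]\to I_m$, and from any $p\in(0,1]_<$ extract (using the monotone left-cut representation and thinning where necessary) a computable strictly increasing sequence of rationals $q_m$ with $q_m\uparrow p$. Letting $f$ denote the map of \thref{thm:sigma02_map_salem_eff}, I set
\[
F(p,x):=\{0\}\cup\bigcup_{m\in\mathbb{N}} \tau_m\bigl(f(q_m,y_m)\bigr).
\]
Computability of $F$ as a map into $\hypCompactV([0,1])$ follows by the same merging argument used in the last paragraph of the proof of \thref{thm:sigma02_map_salem_eff}, since the pieces $\tau_m f(q_m,y_m)$ are uniformly computable in $\hypCompactV([0,1])$ and sit in pairwise disjoint intervals whose only accumulation point is $0$.

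For the dimension analysis I use that, by \thref{thm:sigma02_map_salem_eff}, $f(q_m,y_m)$ is a Salem subset of $[0,1]$ of dimension $q_m$ when $y_m\in Q_2$ and of dimension $0$ otherwise, and that the similarities $\tau_m$ preserve both $\hdim$ and $\fourierdim$. By the countable stability of $\hdim$ and (for closed sets) of $\fourierdim$, the union $F(p,x)$ is a closed Salem set whose dimension equals the supremum of the dimensions of the pieces. If $x\in P_3$, then every $y_m\in Q_2$ and $\dim F(p,x)=\sup_m q_m=p$. If instead $x\notin P_3$, letting $m_0$ be the least index with $x_{m_0}\notin Q_2$, one has $y_m\in Q_2$ precisely when $m<m_0$, so $\dim F(p,x)=\max_{m<m_0} q_m<p$ (with the convention $\max\emptyset=0$ in case $m_0=0$). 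Hence $\dim F(p,x)\ge p$ iff $x\in P_3$.

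For the hardness claim, I specialize to a computable $q\in(0,1]$: the restriction $x\mapsto F(q,x)$ is a computable function $2^{\mathbb{N}\times\mathbb{N}}\to\hypCompactV([0,1])$, and since $F(q,x)$ is Salem we have $F(q,x)\in X_1\Leftrightarrow F(q,x)\in X_2 \Leftrightarrow \dim F(q,x)\ge q \Leftrightarrow x\in P_3$. Hence every $X$ with $X_2\subset X\subset X_1$ satisfies $P_3\le_m X$ and is $\lightfacePi^0_3$-hard; together with the upper bounds of \thref{thm:eff_hausdorff_complexity} and \thref{thm:eff_fourier_complexity} (applicable because $q$ is computable), this yields that $X_1$ and $X_2$ are $\lightfacePi^0_3$-complete. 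The main obstacle is the design of $y_m$: it must be a \emph{total} $\{0,1\}$-sequence computable from $x$ and $m$, not merely a semi-decidable predicate, which rules out the naive ``$\exists k\le m,\ \exists n\ge j$'' definition and motivates the bounded-window encoding above.
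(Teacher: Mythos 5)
Your proof is correct and follows essentially the same route as the paper: scale copies of the Salem sets from \thref{thm:sigma02_map_salem_eff} into disjoint shrinking intervals accumulating at $0$, with dimension parameters $q_m\uparrow p$, and feed the $m$-th copy an element of $\Cantor$ that is in $Q_2$ iff all of $x_0,\dots,x_m$ are. The only genuine divergence is the encoding step: where you devise the sliding-window sequence $y_m$, the paper just takes the pointwise max $\Phi(x)(m,n):=\max_{i\le m}x(i,n)$, which is manifestly total and computable and satisfies $\Phi(x)_m\in Q_2 \iff (\forall i\le m)(x_i\in Q_2)$ directly, so the ``main obstacle'' you flag has a simpler resolution than the bounded-window gadget you built.
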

\begin{proof}
	After showing the existence of a computable $F$ as claimed, the other statements follow. In particular, the completeness of $X_1$ and $X_2$ follows from \thref{thm:eff_hausdorff_complexity} and \thref{thm:eff_fourier_complexity}.
	
	For the first part, consider the computable map $\Phi\colon 2^{\mathbb{N}\times\mathbb{N}}\to 2^{\mathbb{N}\times\mathbb{N}}$ defined as $\Phi(x)(m,n):= \max_{i\le m} \, x(i,n)
	$ and notice that $x\in P_3$ iff $\Phi(x) \in P_3$. Intuitively, $\Phi(x)$ is a computable modification of $x$ s.t.\ the set of rows with finitely many $1$'s is an initial segment of $\mathbb{N}$ (it is $\mathbb{N}$ iff $x\in P_3$).

	For every $m$, let $I_m:=[2^{-2m-1},2^{-2m}]$ and $q_{m}:=p(1-2^{-m-1})$. Fix also a similarity transformation $\tau_m \function{[0,1]}{I_m}$ and define $g_m\function{\Cantor}{\hypCompactV(I_m)}$ as $g_m:= \tau_m \circ f(q_m, \cdot)$, where $f$ is the computable map provided by \thref{thm:sigma02_map_salem_eff}. In particular,
	\[ \dim(g_m(y))= \begin{cases}
		q_m & \text{if } y \in Q_2,\\
		0 	& \text{if } y \notin Q_2.
	\end{cases} \]

	We define
	\[ F(p,x):= \{0\} \cup \bigcup_{m\in\mathbb{N}} g_m(\Phi(x)_m), \]
	where $\Phi(x)_m$ denotes the $m$-th row of $\Phi(x)$. The accumulation point $0$ is added to ensure that $F(p,x)$ is a closed set.

	The computability of $F$ follows from the computability of $\Phi$ and $g$. Using the stability properties of the Hausdorff and Fourier dimensions, we have that $F(p,x)$ is Salem and
	\[ \dim(F(p,x))= \sup_{m\in\mathbb{N}} \hdim( g_m(\Phi(x)_m) ) = \sup_{m\in\mathbb{N}} \fourierdim( g_m(\Phi(x)_m) ).\]
	In particular, if $x\in P_3$ then $\Phi(x)\in P_3$ and, for every $m$, $\Phi(x)_m\in Q_2$, hence
	\[ \dim(F(p,x))= \sup_{m\in\mathbb{N}} \dim(g_m(\Phi(x)_m)) =\sup_{m\in\mathbb{N}} q_m  = p. \]
	On the other hand, if $x\notin P_3$ then there is a $k>0$ s.t.\ for every $m\ge k$, $\Phi(x)_m\notin Q_2$ and hence $\dim(g_m(\Phi(x)_m))=0$. This implies that
	\[ \dim(F(p,x))  \le q_k < p, \]
	and this completes the proof.
\end{proof}

\begin{theorem}
	\thlabel{thm:salem_pi03_complete}
	The set $\{A\in\hypCompactV([0,1]) \st A\in \Salem([0,1])\}$ is $\lightfacePi^0_3$-complete.
\end{theorem}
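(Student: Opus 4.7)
The upper bound is immediate from \thref{thm:salem_eff_pi03} with $K=[0,1]$. My plan for $\lightfacePi^0_3$-hardness is to exhibit a computable reduction from the $\lightfacePi^0_3$-complete set $P_3\subset 2^{\mathbb{N}\times\mathbb{N}}$ to the Salem predicate on $\hypCompactV([0,1])$, by producing a computable map $G$ for which $G(x)$ is Salem precisely when $x\in P_3$.

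The idea is to combine two ingredients. The first is the computable map $F\function{(0,1]_<\times 2^{\mathbb{N}\times\mathbb{N}}}{\hypCompactV([0,1])}$ given by \thref{thm:dim_>=p_pi03_complete_eff}: for any fixed computable $p\in(0,1]$, $F(p,x)$ is always Salem, with $\hdim(F(p,x))=p$ when $x\in P_3$ and $\hdim(F(p,x))<p$ otherwise. The second ingredient is a fixed \emph{computable} compact set $C\subset[0,1]$ with $\hdim(C)=p$ and $\fourierdim(C)=0$; the middle-thirds Cantor set, with $p=\log 2/\log 3$, is the natural choice, and it is well-known to have zero Fourier dimension (as recalled in the introduction for symmetric Cantor sets with dissection ratio $1/n$, $n>2$).

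Using the computable similarities onto the disjoint intervals $[0,1/3]$ and $[2/3,1]$, I will define
\[
G(x) := \tfrac{1}{3}\,F(p,x)\;\cup\;\left(\tfrac{2}{3}+\tfrac{1}{3}\,C\right).
\]
Computability of $G$ as a map into $\hypCompactV([0,1])$ follows from \thref{thm:computability_union_intersection} together with the computability of rational affine scalings on $\hypCompactV([0,1])$. Since similarities preserve both $\hdim$ and $\fourierdim$, the two disjoint pieces retain the dimensions of $F(p,x)$ and of $C$. Finite stability of $\hdim$ yields $\hdim(G(x))=p$ for every $x$, while countable stability of $\fourierdim$ for closed sets (Section~\ref{sec:background_hausdorr_fourier}) yields $\fourierdim(G(x))=\max\{\fourierdim(F(p,x)),\,0\}=\fourierdim(F(p,x))$. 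Because $F(p,x)$ is Salem, $\fourierdim(F(p,x))=\hdim(F(p,x))$, which equals $p$ precisely when $x\in P_3$; hence $G(x)$ is Salem iff $x\in P_3$, completing the reduction.

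No serious obstacle is left: the hard effective work (the uniform Kaufman construction of Section~\ref{sec:eff_kaufman} and the $\lightfacePi^0_3$-hard Salem-valued map $F$) is already in hand. The only black box is the classical fact $\fourierdim(C)=0$ for the middle-thirds Cantor set, which is quoted from the literature and, since $C$ is a single fixed computable object, requires no further effectivization.
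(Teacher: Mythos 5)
Your proposal is correct and follows essentially the same strategy as the paper's proof: both take the union of $F(p,x)$ (the Salem-valued map from Theorem~\thref{thm:dim_>=p_pi03_complete_eff}) with a fixed computable compact set of Hausdorff dimension $p$ and Fourier dimension $0$ (the middle-thirds Cantor set), and then apply finite stability of $\hdim$ and $\fourierdim$ on closed sets. The only cosmetic difference is that you pre-scale the two pieces into disjoint intervals $[0,1/3]$ and $[2/3,1]$, whereas the paper uses the direct union $F(p,x)\cup K$; this extra step is harmless but unnecessary, since stability under finite unions does not require disjointness.
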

\begin{proof}
	The upper bound was proved in \thref{thm:salem_eff_pi03}. To prove the hardness, fix a computable $p>0$ and let $K\in \hypCompactV([0,1])$ be a computable set s.t.\ $\hdim(K)=p$ and $\fourierdim(K)=0$ (e.g.\ we can choose $K$ to be the Cantor middle-third set). Let also $F$ be the map provided by \thref{thm:dim_>=p_pi03_complete_eff} and define the map $h\function{2^{\mathbb{N}\times\mathbb{N}}}{\hypCompactV([0,1])}$ as
	\[ h(x):= F(p,x)\cup K.  \]
	The computability of $h$ follows from the computability of $p$ and $F$, and the fact that the union map $\cup\function{\hypCompactV([0,1])\times\hypCompactV([0,1])}{\hypCompactV([0,1])}$ is computable (see \thref{thm:computability_union_intersection}). Moreover
	\begin{gather*}
		\hdim(h(x)) = \max \{ \dim(F(p,x)), p \},\\
		\fourierdim(h(x)) = \dim(F(p,x)).
	\end{gather*}
	In particular, $h(x)$ is Salem iff $\dim(F(p,x))\ge p$ iff $x\in P_3$.
\end{proof}

\bigskip

We now turn our attention to the closed Salem subsets of $X$, where $X$ is $[0,1]^d$ or $\mathbb{R}^d$. We first notice the following result, which follows from the proofs of \thref{thm:eff_hausdorff_complexity} and \thref{thm:eff_fourier_complexity}.

\begin{lemma}
	\thlabel{thm:comp_compact_Rd}
	${}$
	\begin{itemize}
		\item $\{ (K,p) \in\hypCompactV(\mathbb{R}^d)\times [0,d] \st \hdim(K)> p\}$ is $\lightfaceSigma^0_2$;
		\item $\{ (K,p) \in\hypCompactV(\mathbb{R}^d)\times [0,d] \st \hdim(K)\ge p\}$ is $\lightfacePi^0_3$;
		\item $\{ (K,p) \in\hypCompactV(\mathbb{R}^d)\times [0,d] \st \fourierdim(K)> p\}$ is $\lightfaceSigma^0_2$;
		\item $\{ (K,p) \in\hypCompactV(\mathbb{R}^d)\times [0,d] \st \fourierdim(K)\ge p\}$ is $\lightfacePi^0_3$.		
	\end{itemize}
\end{lemma}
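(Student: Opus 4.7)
The plan is to observe that the arguments of \thref{thm:eff_hausdorff_complexity} and \thref{thm:eff_fourier_complexity} extend essentially verbatim to $X=\mathbb{R}^d$ once we replace the ambient compact $K$ with the uniformly computably compact sequence of cubes $K_n := [-n,n]^d$ and insert an existential quantifier over $n$. The key observation is that every Borel probability measure supported on a compact set $A\subset \mathbb{R}^d$ is automatically supported on some $K_n$, so writing
\[
	D(A):=\{ s\in [0,d] \st (\exists n)(\exists \mu \in \ProbabilityMeas(K_n))(\exists c>0)(\mu(A)\ge 1 \land (\forall x,r)(\mu(\ball{x}{r}) \le cr^s))\}
\]
one recovers exactly the set denoted $D(A)$ in the proof of \thref{thm:eff_hausdorff_complexity}, and similarly for the Fourier version.

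First I would record that $(K_n)_{n\in\mathbb{N}}$ is uniformly co-c.e.\ compact (immediate from the definition) and that the proof of \thref{thm:P(K)_comp_compact} is uniform in the bound $n$, so that $(\ProbabilityMeas(K_n))_{n\in\mathbb{N}}$ is uniformly co-c.e.\ compact as well. Next, given $A\in\hypCompactV(\mathbb{R}^d)$, its $\closedNegRep$-name as a closed subset of $\mathbb{R}^d$ restricts computably and uniformly in $n$ to a $\closedNegRep$-name of $A\cap K_n$ inside $K_n$; by \thref{thm:computability_measures}(2) the evaluation $(n,\mu,A)\mapsto \mu(A)$ is then computable into $\rightReal$ whenever $\mu\in\ProbabilityMeas(K_n)$, so the condition $\mu(A)\ge 1$ is uniformly $\lightfacePi^0_1$ in $(n,\mu,A)$.

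With these uniform facts the argument of \thref{thm:eff_hausdorff_complexity} applied on each $K_n$, combined with the (relativized) projection lemma \thref{thm:effective_AM_eff_spaces}, shows that for each $n$ the set $D_n(A):=\{ s\in [0,d] : (\exists \mu\in \ProbabilityMeas(K_n))\ldots\}$ is uniformly $\lightfaceSigma^{0,A}_2$, whence $D(A)=\bigcup_n D_n(A)\in \lightfaceSigma^{0,A}_2$. The standard characterizations
\[
\hdim(A)>p \iff (\exists s\in\mathbb{Q})(s>p \wedge s\in D(A)), \qquad \hdim(A)\ge p \iff (\forall s\in\mathbb{Q})(s<p \to s\in D(A))
\]
then yield the desired complexities $\lightfaceSigma^0_2$ and $\lightfacePi^0_3$. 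The Fourier case is handled identically, replacing the Frostman condition with the Fourier decay estimate $|\fouriertransform{\mu}(x)|\le c|x|^{-s/2}$; the required computability of $(\mu,x)\mapsto |\fouriertransform{\mu}(x)|$ on $\ProbabilityMeas(K_n)\times \mathbb{R}^d$ is uniform in $n$ because $\cos$ and $\sin$ are effectively bounded by $1$ independently of $n$. The only point requiring attention is that every step above be uniform in the parameter $n$, which is immediate from an inspection of the relevant representations; I do not expect any serious obstacle.
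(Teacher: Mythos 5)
Your proposal is correct and follows essentially the same strategy as the paper: exhaust $\mathbb{R}^d$ by the uniformly computably compact cubes $K_n=[-n,n]^d$, insert an existential quantifier over $n$, and run the argument of the compact-case propositions uniformly in $n$. The only cosmetic difference is that the paper phrases this via the auxiliary sets $X_n\subset\hypCompactV([-n,n]^d)\times[0,d]$ and their computable embedding into $\hypCompactV(\mathbb{R}^d)\times[0,d]$, while you phrase it directly through measures in $\ProbabilityMeas(K_n)$ with $\mu(A)\ge 1$; both reduce to the same uniformity facts.
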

\begin{proof}
	We only prove the statement for $\hdim(K)>p$, the proof of the complexity of the other sets is analogous. Let
	\[ X_n := \{ (K,p) \in\hypCompactV([-n,n]^d)\times [0,d] \st \hdim(K)> p\}. \]
	Since $\hypCompactV([-n,n]^d)$ computably embeds into $\hypCompactV(\mathbb{R}^d)$, we can see $X_n$ as a subset of $\hypCompactV(\mathbb{R}^d)\times [0,d]$. In particular, for every $(K,p)\in \hypCompactV(\mathbb{R}^d)\times [0,d]$,
	\[ \hdim(K)>p \iff (\exists n)((K,p) \in X_n). \]
	Hence, it is enough to show that the sets $\sequence{X_n}{n\in\mathbb{N}}$ are uniformly $\lightfaceSigma^0_2$, i.e.\ that
	\[ \{ (n, K, p) \st (K,p) \in X_n \} \in \lightfaceSigma^0_2(\mathbb{N}\times \hypCompactV(\mathbb{R}^d)\times [0,d]). \]
	Notice that, since the sets $\sequence{[-n,n]^d}{n\in\mathbb{N}}$ are uniformly computably compact, then so are the sets $\sequence{\hypCompactV([-n,n]^d)}{n\in\mathbb{N}}$ (the argument of \thref{thm:K(X)_computably_compact} can be run uniformly in $n$). This, in turn, implies that the set
	\[ \{ (n,K,p)\st (K,p)\in  X_n \}\]
	is $\lightfaceSigma^0_2$, as the argument in the proof of \thref{thm:eff_hausdorff_complexity} can be run uniformly in $n$.	
\end{proof}

This result can be used to obtain the upper bounds in the non-compact case, i.e.\ the effective counterpart of the upper bounds obtained in \cite[Thm.\ 5.4 and Thm.\ 5.5]{MVSalem}.

\begin{proposition}
	\thlabel{thm:eff_complexity_haus_fourier_higher_dim}
	${}$
	\begin{itemize}
		\item $\{ (A,p) \in\hypClosedF(\mathbb{R}^d)\times [0,d] \st \hdim(A)> p\}$ is $\lightfaceSigma^0_2$;
		\item $\{ (A,p) \in\hypClosedF(\mathbb{R}^d)\times [0,d] \st \hdim(A)\ge p\}$ is $\lightfacePi^0_3$;
		\item $\{ (A,p) \in\hypClosedF(\mathbb{R}^d)\times [0,d] \st \fourierdim(A)> p\}$ is $\lightfaceSigma^0_2$;
		\item $\{ (A,p) \in\hypClosedF(\mathbb{R}^d)\times [0,d] \st \fourierdim(A)\ge p\}$ is $\lightfacePi^0_3$;
		\item $\{ A \in\hypClosedF(\mathbb{R}^d)\st A\in\Salem([0,d]) \}$ is $\lightfacePi^0_3$.
	\end{itemize}
\end{proposition}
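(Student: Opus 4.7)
The plan is to reduce to the compact case of \thref{thm:comp_compact_Rd} using the countable stability (for closed sets) of both dimensions recalled in Section~\ref{sec:background_hausdorr_fourier}. For $A \in \hypClosedF(\mathbb{R}^d)$ and $n \in \mathbb{N}$, set $A_n := A \cap [-n,n]^d$: since $A$ is closed, each $A_n$ is compact and $A = \bigcup_n A_n$, so
\[
\hdim(A) = \sup_n \hdim(A_n), \qquad \fourierdim(A) = \sup_n \fourierdim(A_n).
\]
Hence $\hdim(A) > p \iff (\exists n)(\hdim(A_n) > p)$ and $\hdim(A) \ge p \iff (\forall q \in \mathbb{Q})(q < p \rightarrow (\exists n)(\hdim(A_n) > q))$, and analogously for $\fourierdim$. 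These shapes already deliver the target complexities, provided ``$\hdim(A_n) > q$'' and ``$\fourierdim(A_n) > q$'' are $\lightfaceSigma^0_2$ uniformly in $n$, $A$ and $q$.

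For the uniformity I would not try to compute $A_n$ as an element of $\hypCompactV([-n,n]^d)$ from a $\closedFullRep$-name of $A$ (the positive information for $A_n$ is problematic for basic balls meeting $\partial[-n,n]^d$); it is enough to re-run the proofs of \thref{thm:eff_hausdorff_complexity} and \thref{thm:eff_fourier_complexity} with the ambient compact space $[-n,n]^d$ in place of $K$. Concretely I would characterize
\[
\hdim(A_n) > p \iff (\exists s \in \mathbb{Q})(s > p \land s \in D_n(A)),
\]
where $D_n(A)$ is the set of $s \in [0,d]$ for which there exist $\mu \in \ProbabilityMeas([-n,n]^d)$ with $\mu(A) = 1$ and $c \in \mathbb{Q}^+$ such that $\mu(\ball{q_0}{q_1}) \le c q_1^s$ for all $q_0 \in \mathbb{Q}^d$ and $q_1 \in \mathbb{Q}^+$. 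The condition $\mu(A) = 1$ is $\lightfacePi^0_1$ in $(\mu,A)$: the set $\mathbb{R}^d \setdifference A$ is effectively open from the negative information carried by a $\closedFullRep$-name of $A$, and by \thref{thm:computability_measures}(1) the real $\mu(\mathbb{R}^d \setdifference A)$ is left-c.e., so ``$\mu(\mathbb{R}^d \setdifference A) \le 0$'' is $\lightfacePi^0_1$; the Frostman inequality is $\lightfacePi^0_1$ for the same reason. Since $\ProbabilityMeas([-n,n]^d)$ is computably compact uniformly in $n$ by \thref{thm:P(K)_comp_compact}, \thref{thm:effective_AM_eff_spaces} lets me project out $\mu$ while staying $\lightfaceSigma^0_2$, so $D_n(A) \in \lightfaceSigma^0_2$ uniformly in $n$ and $A$.

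The first and third bullets then follow by taking the countable union over $n$. For the second and fourth, the universal quantifier over $q \in \mathbb{Q}$ (with the $\lightfaceSigma^0_1$ premise $q < p$, whose negation is $\lightfacePi^0_1 \subseteq \lightfaceSigma^0_2$) applied to a $\lightfaceSigma^0_2$ matrix yields $\lightfacePi^0_3$. The Fourier bullets follow the same blueprint, invoking the characterization from the proof of \thref{thm:eff_fourier_complexity}: $\fouriertransform{\mu}(x)$ is computable in $\mu \in \ProbabilityMeas([-n,n]^d)$ and $x$ by \thref{thm:computability_measures}(6) (the functions $t \mapsto \cos(x \cdot t)$ and $t \mapsto \sin(x \cdot t)$ are effectively bounded on the cube), so $|\fouriertransform{\mu}(x)| \le c|x|^{-s/2}$ is $\lightfacePi^0_1$. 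Finally, the Salem bullet goes through as in \thref{thm:salem_eff_pi03}: since $\fourierdim \le \hdim$ always holds for Borel sets, $A$ is Salem iff $(\forall r \in \mathbb{Q})(\hdim(A) > r \rightarrow \fourierdim(A) > r)$, which is $\lightfacePi^0_3$ by combining the first and third bullets. The main obstacle is just the uniformity in $n$; once the family $\sequence{\ProbabilityMeas([-n,n]^d)}{n}$ is recognised as uniformly computably compact, \thref{thm:effective_AM_eff_spaces} handles the rest.
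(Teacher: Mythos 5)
Your proof is correct, but it takes a genuinely different route from the paper's. The paper argues via the equivalence
\[
\hdim(A)>p \iff (\exists K\in\hypClosedF(\mathbb{R}^d))\bigl(K\subset A \land K\in\hypCompactV(\mathbb{R}^d) \land \hdim(K)>p\bigr),
\]
shows that the matrix is a computable union of $\lightfacePi^0_1$ sets (using \thref{thm:comp_compact_Rd}, the fact that $K\subset A$ is $\lightfacePi^0_1$ from a positive name for $K$ and a negative name for $A$, and that $K\in\hypCompactV(\mathbb{R}^d)$ is a computable union of $\lightfacePi^0_1$ conditions), and then projects out $K$ using the computable compactness of $\hypClosedF(\mathbb{R}^d)$ established in \thref{thm:hypclosed_Rd_eff_compact}. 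You instead decompose $A=\bigcup_n A_n$ with $A_n:=A\cap[-n,n]^d$, invoke countable stability for closed sets, and quantify over $n$ and $\mu\in\ProbabilityMeas([-n,n]^d)$ with the side condition $\mu(A)=1$; the projection is then over $\ProbabilityMeas([-n,n]^d)$, which is uniformly computably compact by \thref{thm:P(K)_comp_compact}. Your key observation --- that one should \emph{not} try to compute a $\hypCompactV$-name for $A_n$ (positive information near $\partial[-n,n]^d$ is indeed unavailable), but rather keep the ambient cube fixed and impose $\mu(A)=1$ --- is exactly the device the paper already uses inside the proof of \thref{thm:eff_hausdorff_complexity} (``Observe that $\mu\in\ProbabilityMeas(A)$ iff $\mu\in\ProbabilityMeas(K)$ and $\mu(A)\ge 1$''), and it carries over to closed $A$ with negative information and the dual test $\mu(\mathbb{R}^d\setminus A)\le 0$, as you note. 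What your route buys: you bypass \thref{thm:hypclosed_Rd_eff_compact} and also effectively skip \thref{thm:comp_compact_Rd}, getting the result in one pass. What the paper's route buys: \thref{thm:comp_compact_Rd} is reusable as a standalone statement about compact sets, and the hyperspace projection keeps the quantification over a single fixed compact space rather than a parametrized family, so the bookkeeping on uniformity is pushed into the self-contained \thref{thm:hypclosed_Rd_eff_compact}. Both are valid; the complexity counting in your final paragraphs (countable union of $\lightfaceSigma^0_2$ is $\lightfaceSigma^0_2$; countable intersection of $\lightfacePi^0_2\cup\lightfaceSigma^0_2$ is $\lightfacePi^0_3$; the Salem characterization via $(\forall r\in\mathbb{Q})(\hdim(A)>r\rightarrow\fourierdim(A)>r)$) is accurate.
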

\begin{proof}
	We only prove the statement for the Hausdorff dimension, the proof of the complexity of the Fourier dimension is analogous (as both are stable under countable union of closed sets), and the result on the complexity of the Salem sets can be obtained as in \thref{thm:salem_eff_pi03}.
	
	Notice that, since $A$ is closed,
	\[ \hdim(A)>p \iff (\exists K \in \hypClosedF(\mathbb{R}^d))(K\subset A \land K\in \hypCompactV(\mathbb{R}^d) \land \hdim(K)>p)~.\]
	Notice that, if $F$, $G$ are two closed sets represented with the full information representation, the predicate $F\subset G$ is $\lightfacePi^0_1$ as a predicate of $F$ and $G$. In fact we can prove something slightly stronger: if $p_F$ is a $\closedPosRep$-name (positive information name) for $F$ and $q_G$ is a $\closedNegRep$-name (negative information name) for $G$ then the condition $F\subset G$ is $\lightfacePi^0_1$ in $p_F$ and $q_G$. Indeed,
	\[ F\subset G \iff \setcomplement{G}\cap F =\emptyset \iff (\forall i)(\forall j)(q_G(i)\neq p_F(j)).\]
This shows that $K\subset A$ and $K\in \hypCompactV(\mathbb{R}^d)$ are respectively $\lightfacePi^0_1$ (as a predicate of $K$ and $A$) and an effective union of $\lightfacePi^0_1$ sets (as a predicate of $K$, as it is equivalent to $(\exists n)(K\subset [-n,n]^d)$). Moreover, since the
inclusion map $\hypClosedF(X)\restrict{\hypCompact(X)}\hookrightarrow \hypCompactV(X)$ is computable, using \thref{thm:comp_compact_Rd} we have that $\{(K,p) \in \hypClosedF(\mathbb{R}^d)\times [0,d] \st K\in \hypCompactV(\mathbb{R}^d) \land \hdim(K)>p\}$ is an effective union of $\lightfacePi^0_1$ sets. Thus also 
\[ \{ (K,A,p) \in \hypClosedF(\mathbb{R}^d)\times \hypClosedF(\mathbb{R}^d) \times [0,d] \st K\subset A \land K\in \hypCompactV(\mathbb{R}^d) \land \hdim(K)>p\}\]
is an effective union of $\lightfacePi^0_1$ sets. Since $\hypClosedF(\mathbb{R}^d)$ is computably compact
(\thref{thm:hypclosed_Rd_eff_compact}) we can apply \thref{thm:effective_AM_eff_spaces} and conclude that
	\[ \{ (A,p) \in\hypClosedF(\mathbb{R}^d)\times [0,d] \st \hdim(A)> p\}\text{ is } \lightfaceSigma^0_2~.\]
	Since $\hdim(A)\ge p$ iff $(\forall r\in \mathbb{Q})(r< p \rightarrow\hdim(A)>r)$, this also shows that $\hdim(A)\ge p$ is a $\lightfacePi^0_3$ predicate of $A$ and $p$.
\end{proof}

\smallskip

We now turn our attention to the lower bounds for the complexity of the above conditions. In \cite[Sec.\ 4]{MVSalem}, we exploited a recent construction of a higher dimensional analogue of $E(\alpha)$ (introduced in \cite{FraserHambrook2019}) to show that, for a closed $A\subset [0,1]^d$, the conditions $\hdim(A)>p$ and $\fourierdim(A)>p$ are $\boldfaceSigma^0_2$-complete (when $p<d$) and the conditions $\hdim(A)\ge q$ and $\fourierdim(A)\ge q$ are $\boldfacePi^0_3$-complete (when $q>0$). However, we are not aware of any proof of the effectiveness of the arguments presented in \cite{FraserHambrook2019}, which would be needed to prove a higher-dimensional analogue of \thref{thm:sigma02_map_salem_eff}.

However, we use a classical theorem of Gatesoupe to obtain a (slightly weaker) result, namely the completeness of the above conditions when $p$ and $q$ are sufficiently large.

\begin{theorem}
	\thlabel{thm:eff_completeness_n_dim}
	Let $X$ be $[0,1]^d$ or $\mathbb{R}^d$. For every $p\in [d-1,d)$ the sets
	\begin{align*}
		& \{ A \in\hypClosedF(X) \st \hdim(A)> p\},\\
		& \{ A \in\hypClosedF(X) \st \fourierdim(A)> p\}
	\end{align*}
	are $\lightfaceSigma^0_2$-complete. For every computable $q\in (d-1,d]$, the sets
	\begin{align*}
		& \{ A \in\hypClosedF(X) \st \hdim(A)\ge q\},\\
		& \{ A \in\hypClosedF(X) \st \fourierdim(A)\ge q\},\\
		& \{ A \in\hypClosedF(X)\st A \in \Salem(X)\}
	\end{align*}
	are $\lightfacePi^0_3$-complete.
\end{theorem}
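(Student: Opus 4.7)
The upper bounds all come from Proposition~\ref{thm:eff_complexity_haus_fourier_higher_dim}; the case $X=[0,1]^d$ is covered via the computable inclusion $\hypClosedF([0,1]^d)\hookrightarrow\hypClosedF(\mathbb{R}^d)$. So the entire content is establishing hardness, which I plan to obtain by lifting the one-dimensional reductions of Lemma~\ref{thm:sigma02_map_salem_eff}, Theorem~\ref{thm:dim_>=p_pi03_complete_eff}, and Theorem~\ref{thm:salem_pi03_complete} to $d$ dimensions via Gatesoupe's rotation
\[ R(A):=\{x\in\mathbb{R}^d\st \|x\|\in A\},\]
which sends each $1$-dimensional Salem set of dimension $\alpha$ with at least two points to a Salem subset of $\overline{B(0,1)}\subset\mathbb{R}^d$ of dimension $d-1+\alpha$.

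The first technical point is that $R\colon\hypCompactV([0,1])\to\hypCompactV(\overline{B(0,1)})$ is computable at the level of full-information names. For positive information, given a rational ball $B(y,r)\subset\mathbb{R}^d$, the set of norms it realizes is an open interval $I_{y,r}\subset[0,\infty)$, and $B(y,r)\cap R(A)\ne\emptyset$ iff $I_{y,r}\cap A\ne\emptyset$, a $\lightfaceSigma^0_1$ condition in a positive name of $A$. For negative information I would enumerate the rational open balls whose closures are disjoint from $R(A)$ (such balls cover $R(A)^c$): the condition $\overline{B(y,r)}\cap R(A)=\emptyset$ is equivalent to the corresponding compact interval of norms being finitely covered by the negative-information enumeration of $A^c$, which is $\lightfaceSigma^0_1$ by compactness. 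Composing with a computable similarity $\overline{B(0,1)}\to[0,1]^d$ (preserving Salem-ness and both dimensions) and the computable embedding $\hypCompactV([0,1]^d)\hookrightarrow\hypClosedF(X)$ delivers reductions into the target spaces.

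For the $\lightfaceSigma^0_2$-hardness of $\{\hdim>p\}$ and $\{\fourierdim>p\}$ with $p\in[d-1,d)$, pick a computable $p'\in(p,d)$, set $\alpha:=p'-(d-1)\in(0,1)$, and reduce $Q_2$ via $x\mapsto R(f(\alpha,x))$ with $f$ from Lemma~\ref{thm:sigma02_map_salem_eff}. Inspection of the construction shows $f(\alpha,x)$ is always Salem with at least two points (it contains $0$ and a non-degenerate scaled copy of the auxiliary $g(q_0,\cdot)$ inside $I_0$), so Gatesoupe's theorem gives $\dim R(f(\alpha,x))=d-1+\dim f(\alpha,x)$, equalling $p'>p$ when $x\in Q_2$ and $d-1\le p$ otherwise. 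Analogously, for the $\lightfacePi^0_3$-hardness of $\{\hdim\ge q\}$ and $\{\fourierdim\ge q\}$ with computable $q\in(d-1,d]$, set $p':=q-(d-1)\in(0,1]$ and reduce $P_3$ via $x\mapsto R(F(p',x))$: the set $F(p',x)$ from Theorem~\ref{thm:dim_>=p_pi03_complete_eff} is always Salem with $\ge 2$ points and reaches dimension $p'$ exactly when $x\in P_3$, so $\dim R(F(p',x))\ge q$ iff $x\in P_3$.

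For the Salem condition, fix $p':=\log 2/\log 3$ and $p_0:=d-1+p'\in(d-1,d)$, let $K_1\subset[0,1]$ be the Cantor middle-third set, and put $K:=K_1\times[0,1]^{d-1}$, a computable element of $\hypCompactV([0,1]^d)$ with $\hdim K=p_0$. A key observation is $\fourierdim K=0$: any probability $\mu$ on $K$ projects to $\pi_*\mu$ on $K_1$, and $\fouriertransform{\mu}(\xi,0,\ldots,0)=\fouriertransform{\pi_*\mu}(\xi)$, so the decay of $\fouriertransform{\mu}$ along the first-coordinate axis is controlled by $\fourierdim K_1=0$. The reduction $x\mapsto R(F(p',x))\cup K$ therefore has $\hdim=p_0$ always (dominated by $K$), while its Fourier dimension, by countable stability for closed sets, equals $\max(\fourierdim R(F(p',x)),0)=d-1+\dim F(p',x)$, which reaches $p_0$ iff $x\in P_3$; so the union is Salem iff $x\in P_3$. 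The main obstacles are the effective verification of $R$'s computability for both halves of the full-information representation and the bookkeeping needed to ensure the one-dimensional constructions always yield sets with at least two points so that Gatesoupe's theorem applies.
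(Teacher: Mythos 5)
Your proposal is correct and follows the same basic strategy as the paper: the upper bounds come from \thref{thm:eff_complexity_haus_fourier_higher_dim}, and the hardness comes from lifting the one-dimensional reductions of \thref{thm:sigma02_map_salem_eff}, \thref{thm:dim_>=p_pi03_complete_eff}, and \thref{thm:salem_pi03_complete} to dimension $d$ via Gatesoupe's radial map. There are, however, a few implementation choices where you diverge from the paper, all of which are valid and in some respects cleaner. First, for $\lightfaceSigma^0_2$-hardness the paper simply uses $x \mapsto r(f(1,x))$, which has dimension $d$ or $d-1$, and this works uniformly for all $p\in[d-1,d)$ without needing to select a computable $p'\in(p,d)$; your variable-$\alpha$ version works too but is slightly more elaborate than necessary. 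Second, for the $\lightfacePi^0_3$ cases the paper does not compose $R$ with the one-dimensional map $F$; instead it fixes a uniformly computable sequence of disjoint cubes $C_n\subset[0,1]^d$ accumulating at the origin and re-runs the construction of \thref{thm:dim_>=p_pi03_complete_eff} inside them. Your observation that one can instead post-compose the already-built $F$ with $R$ (since $R$ commutes with the countable-union structure and the dimension formula $\dim R(A)=d-1+\dim A$ is compatible with suprema) is a genuine simplification of that step. Third, for the Salem case the paper replaces the one-dimensional Cantor set $K$ by some compact $Y\subset C_0$ with Hausdorff dimension $d$ and null Fourier dimension, whereas you use the concrete computable set $K_1\times[0,1]^{d-1}$ with $\hdim=d-1+\log 2/\log 3$; both choices work, yours is fully explicit and avoids committing to a set attaining full dimension $d$. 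Finally, you map into the full ball $\overline{B(0,1)}$ and then apply a computable similarity into $[0,1]^d$, whereas the paper's $\tilde A$ intersects the annular set with the cube; your variant sidesteps any worry about whether passing to a sector preserves the Fourier-dimension estimate. Your care in verifying that the radial map is computable for both halves of the full-information representation and that the one-dimensional sets always contain a nonzero point (so that Gatesoupe applies) fills in details the paper leaves implicit.
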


\begin{proof}
	By \thref{thm:eff_complexity_haus_fourier_higher_dim}, it is enough to show that the above sets are hard for their respective class.
	
Recall that, by a theorem of Gatesoupe \cite{Gatesoupe67}, if $A\subset [0,1]$ has at least a point different from $0$ and is Salem with dimension $\alpha$ then the set $\tilde{A}:= \{x\in [0,1]^d \st |x|\in A\}$ is Salem with dimension $d-1+\alpha$. It is easy to see that the map $r\function{\hypCompactV([0,1])}{\hypCompactV([0,1]^d)}:=A\mapsto \tilde{A}$ is computable.
	
	To show that the first two sets are $\lightfaceSigma^0_2$-hard, let $f$ be the map provided by \thref{thm:sigma02_map_salem_eff}. For $x\in \Cantor$ and $p\in [d-1,d)$, we have
	\[ x\in Q_2 \iff \hdim(r(f(1,x)))>p \iff \fourierdim(r(f(1,x)))>p. \]
	
	Fix now a computable $q\in (d-1,d]$. To show that the conditions $\hdim(A)\ge q$ and $\fourierdim(A)\ge q$ are $\lightfacePi^0_3$-hard, consider a sequence $\sequence{C_n}{n\in\mathbb{N}}$ of mutually disjoint closed cubes s.t.\

	\begin{itemize}
		\item $C_n\subset [0,1]^d$,
		\item $\closure{\bigcup_{n\in\mathbb{N}} C_n} = \{\mathbf{0} \} \cup \bigcup_{n\in\mathbb{N}} C_n$, where $\mathbf{0}$ is the origin of the $d$-dimensional Euclidean space,
		\item the sets have uniformly computable $\psi$-names, i.e.\ there is a computable map that, given $n$, produces a $\psi$-name for $C_n$.
	\end{itemize}
	It is easy to provide examples of sequences of closed sets that satisfy the above conditions. In particular, the last point guarantees that the similarity transformations $\tau_n\function{[0,1]^d}{C_n}$ are uniformly computable. The claim follows by a straightforward adaptation of the proof of \thref{thm:dim_>=p_pi03_complete_eff}.
	
	Finally, to show that the family of closed Salem sets is $\lightfacePi^0_3$-hard, we adapt the proof of \thref{thm:salem_pi03_complete}, where the compact set $K$ is replaced with a fixed computable compact set $Y\subset C_0$ with null Fourier dimension and Hausdorff dimension $d$.
\end{proof}

\section{The Weihrauch degree of Hausdorff and Fourier dimension}
\label{sec:salem_weihrauch}

In this section, we briefly show how the results obtained in the previous sections can be used to characterize the uniform strength of the maps computing the Hausdorff and Fourier dimension of a closed subset of $\mathbb{R}^d$, for some fixed $d$.

Let $f\pmfunction{X}{Y}$ and $g\pmfunction{Z}{W}$ be partial multi-valued functions between represented spaces. We say that $f$ is \textdef{Weihrauch reducible} to $g$ ($f\weireducible g$) iff there are two computable maps $\Phi,\Psi\pfunction{\Baire}{\Baire}$ s.t., for every realizer $G$ of $g$, the map $p\mapsto \Psi(\coding{ p,G\Phi(p) })$ is a realizer for $f$. A thorough presentation on Weihrauch reducibility is out of the scope of this paper, and the reader is referred to \cite{BGP17}.

We define the \textdef{compositional product} as
\[ f\compproduct g:=\max_{\weireducible}\{f_0\circ g_0\st f_0\weireducible f \text{ and } g_0\weireducible g \}. \]
This operator captures the idea of using $g$ and $f$ in series, possibly using a computable procedure to map a name for an output of $g$ to a name for an input of $f$. While, formally, $f\compproduct g$ is a Weihrauch degree (and not a specific multi-valued function), with a small abuse of notation, we write $h\weireducible f\compproduct g$ with the obvious meaning. We also write $f^{[n]}$ to denote the $n$-fold compositional product of $f$ with itself, where $f^{[0]}:=\mathrm{id}$ and $f^{[1]}:=f$.

	Let $\boldfaceGamma$ be a Borel pointclass. We say that $f\pfunction{X}{Y}$ is $\boldfaceGamma$-measurable if, for every open $U\subset Y$, $f^{-1}(U)\in \boldfaceGamma(\dom(f))$, i.e.\ there exists $V\in\boldfaceGamma(X)$ s.t.\ $f^{-1}(U)=V\cap \dom(f)$. If $X$ and $Y$ are represented spaces, we say that $f$ is \textdef{effectively $\boldfaceGamma$-measurable} or \textdef{$\boldfaceGamma$-computable} if the map
	\[ \boldfaceGamma^{-1}(f)\mfunction{\boldfaceSigma^0_1(Y)}{\boldfaceGamma(X)}:=U\mapsto\{V\subset X \st f^{-1}(U)=V\cap \dom(f) \} \]
	is computable. In particular, if $f$ is total then $\boldfaceGamma^{-1}(f)$ is single-valued. This notion can be generalized in a straightforward way to multi-valued functions (see \cite[Def.\ 3.5]{Brattka05}).
	
	Let $\mflim\pfunction{\infStrings{(\Baire)}}{\Baire}$ be the function mapping a convergent sequence in the Baire space to its limit. In the proof of \thref{thm:wei_degree_hausdorff_fourier} we will use the following result:
	
	\begin{theorem}[{\cite[Thm.\ 6.5]{BGP17}}]
		\thlabel{thm:eff_measurability}
		$f \text{ is }\boldfaceSigma^0_{k+1}\text{-computable}$ iff $f$ is Weihrauch reducible to $\mflim^{[k]}$.
	\end{theorem}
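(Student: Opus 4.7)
The plan is to prove both implications by induction on $k$, leveraging the structural fact that $\mflim$ precisely captures the jump of one level of the Borel hierarchy, with the base case ($k=0$) reducing to the identity between computable realizability and $\boldfaceSigma^0_1$-computability.

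For the direction $f \weireducible \mflim^{[k]} \Rightarrow f$ is $\boldfaceSigma^0_{k+1}$-computable, I would first establish that $\mflim^{[k]}$ itself is $\boldfaceSigma^0_{k+1}$-computable. For $k=1$, the preimage of a basic cylinder $[\sigma] \subset \Baire$ under $\mflim$ is $\{\sequence{x_n}{n\in\mathbb{N}} \st (\exists N)(\forall n \ge N)(x_n \in [\sigma])\}$, which is uniformly $\lightfaceSigma^0_2$; the inductive step follows from $\mflim^{[k+1]} \weiequiv \mflim \compproduct \mflim^{[k]}$ together with the fact that composing a $\boldfaceSigma^0_a$-computable function with a $\boldfaceSigma^0_2$-computable function yields a $\boldfaceSigma^0_{a+1}$-computable function. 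Then, given $\Phi,\Psi$ witnessing $f \weireducible \mflim^{[k]}$, after decomposing $\Psi^{-1}(U)$ as a computable union of cylinders $[a_n] \times [b_n]$, the preimage under $f$ of a basic open $U$ in the codomain can be written as $f^{-1}(U) = \bigcup_n [a_n] \cap \Phi^{-1}\bigl((\mflim^{[k]})^{-1}([b_n])\bigr)$, and each summand is uniformly $\lightfaceSigma^0_{k+1}$ in $U$, so $f$ is $\boldfaceSigma^0_{k+1}$-computable.

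For the converse, the key observation is that $\mflim^{[k]}$ is Weihrauch-complete for $\boldfaceSigma^0_{k+1}$-computable (multi)functions between admissibly represented spaces. Given a $\lightfaceSigma^0_{k+1}$ predicate $P(p,n)$, one can computably build from $p$ a point in $\dom(\mflim^{[k]})$ whose iterated limit encodes the characteristic function of $\{n \st P(p,n)\}$; this exploits the standard fact that $\lightfaceSigma^0_{k+1}$ predicates on $\mathbb{N}$ are exactly those computable with an oracle for $\emptyset^{(k)}$, and that $\emptyset^{(k)}$ arises as the iterated limit of a computable sequence in the Baire space. The $\boldfaceSigma^0_{k+1}$-computability hypothesis on $f$ provides, uniformly in the basic open $B_n$ of the codomain, a $\lightfaceSigma^0_{k+1}$ description of $f^{-1}(B_n) \cap \dom(f)$; feeding all these descriptions jointly through the encoding above and postprocessing the output of $\mflim^{[k]}$ produces a $\repmap{Y}$-name of (an element of) $f(p)$.

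The main obstacle, I expect, will be making the reductions in the backward direction fully uniform: assembling a single computable $\Phi$ that prepares the input to $\mflim^{[k]}$ independently of which basic open set is being probed, and a single $\Psi$ that extracts the enumeration of all indices $n$ with $f(p) \in B_n$ from the limit. This requires a parametric version of the Post–Tarski–Kuratowski characterization of $\lightfaceSigma^0_{k+1}$ in terms of alternating quantifier prefixes, and verifying that the passage from the computable realizer of the map $U \mapsto f^{-1}(U)$ supplied by effective measurability to a Weihrauch reduction can be organized in a single construction, rather than one construction per open set in the codomain.
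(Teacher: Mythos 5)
The paper does not prove this theorem: it is quoted verbatim as \cite[Thm.\ 6.5]{BGP17}, itself a generalization of \cite[Thm.\ 9.1]{Brattka05}, and is used as a black box in the proof of \thref{thm:wei_degree_hausdorff_fourier}. There is therefore no internal proof to compare your proposal against; what follows assesses your sketch against the argument in the cited sources.

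Your forward direction is on the right track: proving that $\mflim^{[k]}$ itself is $\boldfaceSigma^0_{k+1}$-computable (by induction, using that $\mflim$ is $\boldfaceSigma^0_2$-computable together with a composition lemma) and then transporting this along the reduction via the decomposition of $\Psi^{-1}(U)$ into rectangles is the standard route. In the backward direction there is a genuine slip to fix before anything else: $\lightfaceSigma^0_{k+1}$ predicates on $\mathbb{N}$ are \emph{semi-decidable} (c.e.) relative to $\emptyset^{(k)}$ by Post's theorem, not ``computable with an oracle for $\emptyset^{(k)}$'' as you write; decidability relative to $\emptyset^{(k)}$ characterizes $\lightfaceDelta^0_{k+1}$. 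Since producing a $\repmap{Y}$-name for $f(p)$ only requires enumerating $\{n : f(p)\in B_n\}$, the semi-decidable class is indeed the one you need, and your later phrasing about ``extracting the enumeration'' is consistent with this, but the earlier claim should be corrected. Beyond that, the obstacle you flag yourself --- uniformity --- is compounded by a relativization issue: the predicate $P(p,n)$ involves the real parameter $p$, so what you need is $\lightfaceSigma^0_1(p^{(k)})$, not $\lightfaceSigma^0_1(\emptyset^{(k)})$, and your single preprocessing $\Phi$ must hand $\mflim^{[k]}$ a point from which both $p$ and the iterated-jump information can be recovered on the other side. In \cite{Brattka05,BGP17} this is handled structurally by working at the level of representations, exploiting $\mflim\weiequiv \mathrm{J}$ (the Turing jump) and the jump of the codomain's representation; that framing delivers the needed uniformity for free and sidesteps the ad hoc parametric normal-form machinery you anticipate having to set up.
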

	This is a generalization of \cite[Thm.\ 9.1]{Brattka05}, and draws an important connection between the Weihrauch degrees and the effective Borel hierarchy.

We identify two maps corresponding to the Hausdorff dimension and two maps corresponding to the Fourier dimension, according to the way closed sets are represented:
\begin{itemize}
	\item[] $\hdim,\fourierdim \function{\hypClosedUF(\mathbb{R}^d)}{\mathbb{R}}$,
	\item[] $\hdim^{\hypClosedF}, \fourierdim^{\hypClosedF} \function{\hypClosedF(\mathbb{R}^d)}{\mathbb{R}}$.
\end{itemize}

\begin{theorem}
	\thlabel{thm:wei_degree_hausdorff_fourier}
	$\mflim^{[2]}\weiequiv \hdim^{\hypClosedF} \weiequiv \hdim \weiequiv \fourierdim^{\hypClosedF} \weiequiv \fourierdim.$
\end{theorem}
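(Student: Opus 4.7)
I would establish the cycle
\[
\hdim^{\hypClosedF} \weireducible \hdim \weireducible \mflim^{[2]} \weireducible \hdim^{\hypClosedF},
\]
and the analogous cycle for $\fourierdim$; combined, these give all the stated equivalences.

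\textit{Easy reductions and upper bound.} The identity induces a computable map $\hypClosedF(\mathbb{R}^d) \to \hypClosedUF(\mathbb{R}^d)$ (a $\closedFullRep$-name contains a $\closedNegRep$-name), so composing with any realizer of $\hdim$ yields a realizer of $\hdim^{\hypClosedF}$; this gives $\hdim^{\hypClosedF} \weireducible \hdim$ and the Fourier analogue. For the upper bound, I would show that $\hdim\function{\hypClosedUF(\mathbb{R}^d)}{\mathbb{R}}$ is $\boldfaceSigma^0_3$-computable and apply \thref{thm:eff_measurability}. A subbase for the topology of $\mathbb{R}$ consists of the rays $(q,+\infty)$ and $(-\infty,q)$ with $q\in\mathbb{Q}$; by \thref{thm:eff_hausdorff_complexity} and \thref{thm:eff_complexity_haus_fourier_higher_dim} their preimages under $\hdim$ are, uniformly in $q$, of class $\lightfaceSigma^0_2$ and $\lightfaceSigma^0_3$ respectively (the second is the complement of $\{A : \hdim(A)\ge q\}$). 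Closure of $\lightfaceSigma^0_3$ under finite intersection and countable union then gives the $\boldfaceSigma^0_3$-computability uniformly. The same argument using \thref{thm:eff_fourier_complexity} yields $\fourierdim \weireducible \mflim^{[2]}$ and the two $\hypClosedF$-variants.

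\textit{Lower bound.} The main technical step is $\mflim^{[2]} \weireducible \hdim^{\hypClosedF}$ (and its Fourier analogue). I would take as forward map $\Phi$ a refinement of the construction in \thref{thm:dim_>=p_pi03_complete_eff} and as backward map $\Psi$ the projection to the $\mathbb{R}$-output. A generic $\mflim^{[2]}$-input can be put in normal form as an effective doubly-indexed sequence $\sequence{q_{n,m}}{n,m\in\mathbb{N}}$ of rationals in $[0,d]$ such that each row converges to some $r_n$ and $r_n \to p$. Using the effective Kaufman theorem (\thref{thm:s_alpha_effective}) together with the scaling-and-union scheme of \thref{thm:sigma02_map_salem_eff} and \thref{thm:dim_>=p_pi03_complete_eff}, I would place inside each cube $C_n$ of a disjoint computable sequence in $[0,1]^d$ (accumulating at the origin) a rescaled closed Salem set whose dimension encodes $r_n$ via an inner limit $m\to\infty$ driven by the $q_{n,m}$. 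By countable closed stability of both $\hdim$ and $\fourierdim$, the resulting closed set $\Phi(q)$ is Salem with $\hdim(\Phi(q)) = \fourierdim(\Phi(q)) = \sup_n r_n = p$, so applying any realizer of $\hdim^{\hypClosedF}$ to $\Phi(q)$ outputs $p$ directly.

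\textit{Main obstacle.} The delicate point is the interplay of the two limits: the inner limit $m\to\infty$ must be implemented so that the resulting set carries a \emph{positive}-information name (otherwise it only sits in $\hypClosedUF$ and not in $\hypClosedF$), while the outer limit $n\to\infty$ must preserve both dimensions, which requires countable closed stability of $\fourierdim$ (and not merely its inner regularity on compact sets). The reason this can be made to work without revisiting the effective Kaufman construction is that \thref{thm:sigma02_map_salem_eff} and \thref{thm:dim_>=p_pi03_complete_eff} already produce full-information $\closedFullRep'$-names into $\hypCompactV([0,1])$, which embeds computably in $\hypClosedF(\mathbb{R}^d)$ by \thref{thm:hypclosed_Rd_eff_compact}; the remaining work is a careful bookkeeping of the scales so that the prescribed dimension value is attained exactly, not only up to inequality.
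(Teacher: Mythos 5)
Your upper-bound half (the reductions $\hdim^{\hypClosedF}\weireducible\hdim\weireducible\mflim^{[2]}$ and the Fourier analogues, via $\boldfaceSigma^0_3$-computability of the dimension maps and \thref{thm:eff_measurability}) is essentially identical to the paper's argument and is correct.

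The lower-bound half, $\mflim^{[2]}\weireducible\hdim^{\hypClosedF}$, is where you diverge, and there are real gaps. The crucial one is your ``inner limit'' step: you want to place in each cube $C_n$ a closed Salem set whose dimension equals $r_n=\lim_m q_{n,m}$, computably in the row $\sequence{q_{n,m}}{m}$. But the only device the paper supplies for manufacturing Salem sets of prescribed dimension is the map $f$ of \thref{thm:sigma02_map_salem_eff}, whose first argument lives in $[0,1]_<$, i.e.\ it needs a \emph{left-cut} name of the target dimension. A $\mflim$-style row $\sequence{q_{n,m}}{m}$ is not a left-cut name, and converting a convergent rational sequence into a monotone one is precisely the power of $\mflim$ itself; invoking it here makes the reduction circular. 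The actual proof circumvents this entirely: it uses the characterization of $\mflim^{[2]}$ as answering countably many \emph{independent} $\lightfaceSigma^0_2$ questions in parallel, feeds each question into the second ($\Cantor$-valued, $Q_2$-detecting) argument of $f$ with a \emph{fixed} rational dimension target $p_\sigma$, and then decodes all the answers from fixed binary digit positions of $\dim(A)$. Two further points in your sketch need repair: (i) countable stability gives $\hdim(\Phi(q))=\sup_n r_n$, which equals $\lim_n r_n=p$ only if the $r_n$ are monotone, a normalization you have not arranged; and (ii) the output of $\mflim^{[2]}$ is a point of $\Baire$, so ``outputs $p$ directly'' presupposes that the real $p$ faithfully and computably codes that point — this requires a deliberate encoding (the paper adds $1/15$ so that $\dim(A)$ has a unique binary expansion and each bit is readable from a Cauchy name), which your normal form asserts but never sets up. Finally, you write ``a rescaled closed Salem set'' inside $C_n\subset[0,1]^d$: a Salem subset of $[0,1]$ viewed in $[0,1]^d$ for $d>1$ has Fourier dimension $0$, so the rotation $r(F)=\{z:|z|\in F\lor|z|=1\}$ via Gatesoupe (used explicitly in the paper's proof) is not optional, yet it is absent from your argument.
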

\begin{proof}
	It is immediate to see that $\hdim^{\hypClosedF} \weireducible \hdim$ and $\fourierdim^{\hypClosedF} \weireducible \fourierdim$. To prove the reductions $\hdim\weireducible \mflim^{[2]}$ and $\fourierdim\weireducible \mflim^{[2]}$, by \thref{thm:eff_measurability} it suffices to show that the maps $\hdim$ and $\fourierdim$ are $\boldfaceSigma^0_{3}$-computable. This follows by \thref{thm:eff_complexity_haus_fourier_higher_dim} as 
	\[ \hdim^{-1}((a,b)) = \{ F\in\hypClosedF(\mathbb{R}^d) \st \hdim(F)>a \land \hdim(F)<b \}. \]
	In fact, given $a,b\in [0,d]$ we can uniformly compute a $(a\oplus b)$-computable $\repmap{\boldfaceSigma^0_3}$-name for $\hdim^{-1}( (a,b) )$.
		
	Finally, to prove that $\mflim^{[2]}\weireducible \hdim^{\hypClosedF}$ and $\mflim^{[2]}\weireducible \fourierdim^{\hypClosedF}$ we show that, given a sequence $\sequence{x_i}{i\in\mathbb{N}}$ in $\Cantor$, we can uniformly build a closed Salem subset $A$ of $[0,1]^d$ s.t.\ $\dim(A)$ uniformly computes whether $x_i \in Q_2$, where $Q_2$ is the fixed $\lightfaceSigma^0_2$-complete set (see Section~\ref{sec:rep_hyperspaces}). This suffices because $\mflim^{[2]}$ is Weihrauch equivalent to answering countably many $\lightfaceSigma^0_2$ questions in parallel.
	
	Let $f$ be the computable map provided by \thref{thm:sigma02_map_salem_eff}. Let also
	\[ r\function{\hypCompactV([0,1])}{\hypCompactV([-1,1]^d)} :=F\mapsto \{ z\in \mathbb{R}^d \st |z| \in F \lor |z|=1\}\] and define $g:= r\circ f$.  Recall that, by the classic theorem of Gatesoupe \cite{Gatesoupe67} (to apply it we added the condition $|z|=1$ in the definition of $r$), if $F$ is Salem with dimension $\alpha$ then $r(F)$ is Salem with dimension $d-1+\alpha$. For every non-constantly $0$ string $\sigma\in \cantor$, let $I_\sigma:=\{ i<\length{\sigma} \st \sigma(i)=1\}$ and let $p_\sigma:=\frac{1}{15}+\sum_{i\in I_\sigma} 2^{-2i-1}$. The term $1/15$ is added for technical reasons, which will become apparent at the end of the proof. Define $y_\sigma\in \Cantor$ as $y_\sigma(n):=\max_{i\in I_\sigma} x_i(n)$. Clearly
	\begin{align*}
		(\forall i\in I_\sigma)(x_i\in Q_2) & \iff y_\sigma \in Q_2\\
		& \iff \dim(f(p_\sigma,y_\sigma))=p_\sigma \iff\dim(g(p_\sigma,y_\sigma))=d-1+p_\sigma~.
	\end{align*}
	On the other hand, $(\exists i\in I_\sigma)(x_i\notin Q_2) \iff \dim(g(p_\sigma,y_\sigma))=d-1$.
	
	As in the proof of \thref{thm:eff_completeness_n_dim}, let $\sequence{C_n}{n\in\mathbb{N}}$ be a sequence of mutually disjoint closed cubes s.t.\
	\begin{itemize}
		\item $C_n\subset [0,1]^d$,
		\item $\closure{\bigcup_{n\in\mathbb{N}} C_n} = \{\mathbf{0} \} \cup \bigcup_{n\in\mathbb{N}} C_n$, where $\mathbf{0}$ is the origin of the $d$-dimensional Euclidean space,
		\item the sets have uniformly computable $\psi$-names.
	\end{itemize}
	
	For every $\sigma$ as above, we can uniformly translate and scale the set $g(p_\sigma,y_\sigma)$ to a subset $G_\sigma$ of $C_{\coding{\sigma}}$. Consider now the closed set $A:=\{ \mathbf{0} \} \cup \bigcup_\sigma G_\sigma$. It is routine to show that $A$ is Salem and $\dim(A)=d-1+\frac{1}{15}+\sum_{i\in\mathbb{N}} 2^{-2i-1}\charfun{Q_2}(x_i)$. It is then straightforward to notice that, for every $i$, the value of $\charfun{Q_2}(x_i)$ is the $(2i)$-th digit in the binary expansion of $\dim(A)$.
	
	Notice that, in general, the map sending a Cauchy representation of a real to its binary expansion is not computable (reals with two binary representations can be used to diagonalize against any possible computation). To ensure computability, we defined $p_\sigma$ so that $\dim(A)$ is guaranteed to have a unique binary expansion (and hence its binary representation is computable from its Cauchy name). In fact the binary expansion of $\dim(A)$ has value $\charfun{Q_2}(x_i)$ in the $(2i)$-th position, $0$ in the positions congruent to $1 \mod 4$, and $1$ in the positions congruent to $3 \mod 4$ (to attain the latter conditions we added $1/15$, which in binary is $0.\overline{0001}$). 
\end{proof}

The Weihrauch equivalence between $\mflim^{[2]}$ and the map computing the Hausdorff dimension of a closed subset of $[0,1]$ (and, more generally, of a compact subset of $\mathbb{R}$) was already proved in \cite[Thm.\ 48]{PFmeasures}. Our approach extends that result and, at the same time, characterizes the degree of the map computing the Fourier dimension.

\bibliographystyle{mbibstyle}
\bibliography{bibliography}

\printauthor

\end{document}